\documentclass[12pt]{article}

\usepackage{amsmath,amsthm,amssymb,amscd}
\topmargin-2cm\textheight24cm\oddsidemargin-1cm\textwidth18cm


\theoremstyle{plain}
\newtheorem{thm}{Theorem}[section]
\newtheorem{prop}[thm]{Proposition}
\newtheorem{dfn}[thm]{Definition}
\newtheorem{lem}[thm]{Lemma}
\newtheorem{cor}[thm]{Corollary}
\theoremstyle{definition}
\newtheorem*{rem}{Remark}

\def\A{{\mathcal A}}
\def\D{{\mathcal D}}
\def\E{{\mathcal E}}
\def\EE{{\mathbf E}}
\def\F{{\mathcal F}}
\def\H{{\mathcal H}}
\def\area{{\rm area}}
\def\vol{{\rm vol}}
\def\supp{{\rm supp}}

\newcommand{\RR}{{\mathbb R}}
\newcommand{\CC}{{\mathbb C}}
\newcommand{\NN}{{\mathbb N}}

\begin{document}

\title{Integral geometric properties of non-compact harmonic spaces}

\author{Norbert Peyerimhoff \and Evangelia Samiou}

\date{\today}

\maketitle

\section{Introduction}

A complete Riemannian manifold $(X,g)$ of dimension $n+1$ is called harmonic, if the volume density function in normal coordinates around a point $x_0$ depends only on the distance from this point. Rank one symmetric spaces are harmonic, and Lichnerowicz conjectured that a simply connected harmonic space must be flat or rank one symmetric. For compact simply connected spaces this is true by a theorem of Szabo \cite{Sz-90}. However, certain 3-step solvmanifolds, constructed by Damek and Ricci \cite{DR-92}, provide examples of non-compact {\em non-symmetric} homogeneous harmonic spaces. Heber \cite{Heb-06} proved that there exist no other simply connected {\em homogeneous} harmonic spaces. Recently, Knieper \cite{Kn-12} showed for non-compact simply connected harmonic spaces that (i) having a purely exponential volume density function,  (ii) being Gromov-hyperbolic, and (iii) having a Anosov geodesic flow, are all equivalent conditions.

Non-compact harmonic spaces are manifolds without conjugate points. Moreover, they are Einstein and, therefore,
analyic by the Kazdan-De Turck theorem.

It was shown by Willmore \cite{Wi-50} that harmonic manifolds can also be characterized as those analytic spaces for which all harmonic functions $f$ satisfy the mean value property, namely, that the average of $f$ over any geodesic sphere equals the value of $f$ at its center. It is well-known that in a harmonic space every function satisfying the mean value property at all points for all radii must be harmonic. This is no longer true if a function only satisfies the mean value property at all points for a single radius $r_1 > 0$. A simple example of a non-harmonic function satisfying the mean value property for the radius $r_1 = 2 \pi$ is the cosine function on the real line $X = \RR$. We will show in this paper that in arbitrary non-compact harmonic spaces, the mean value property for two {\em generically chosen} radii implies harmonicity of the function, namely, this is true for {\em any fixed radius $r_1 > 0$ and any choice of $r_2 > 0$ avoiding a certain countable set} (see Theorem \ref{theomain3}, combined with Proposition \ref{prop:count}). In the above example the second radius $r_2$ has to avoid rational multiples of $r_1$.  Furthermore, we study the question whether a continuous function must vanish in case that all its integrals over certain spheres or balls are zero (see Theorem \ref{theomain1}).

The paper is organised as follows. In Section 2, we introduce basic
notions and define convolutions in harmonic spaces following ideas of
Szabo \cite{Sz-90}, and prove useful properties of them.

In Section 3, we derive fundamental results for the Abel transform and
the spherical Fourier transform, in particular, that the Abel
transform and its dual are topological isomorphisms, using finite
propagation speed of the wave equation and a D'Alembert type formula for the
Klein-Gordon equation.

In Section 4 we prove the above mentioned integral geometric results
for all non-compact harmonic manifolds. The arguments there are
analogous to our earlier paper \cite{PS-10}, where we studied two
radius results for Damek-Ricci spaces. This realises the proposed
research direction indicated in \cite[Section 10]{BZ-80}. A crucial
step is the reduction of the problem to a classical result of
L. Schwartz \cite{Schw-47} on mean periodic functions. For a modern
treatment of mean periodic functions in symmetric spaces, see
\cite{VV-09}.

Finally, in Section 5, we present some results related to the Cheeger
constant (Theorem \ref{thm:cheeg}) and to the heat kernel (Theorem
\ref{thm:heatk}) of non-compact harmonic manifolds.

\bigskip

{\bf Acknowledgements:} Both authors are grateful to the University of
Cyprus for the financial support. We also thank Professor
Yiorgos-Sokratis Smyrlis for useful conversations.

\section{Radial eigenfunctions and convolutions}

Henceforth, $(X,g)$ denotes a non-compact, complete, simply connected
harmonic space, $\theta(r)$ the density function of a geodesic sphere
of radius $r > 0$, $H \ge 0$ the mean curvature of all horospheres,
and $x_0 \in X$ a particular reference point. Let $r(x) :=
d(x_0,x)$. The closed ball of radius $r > 0$ around $x \in X$ is
denoted by $B_r(x) \subset X$. For the inner product, we use the
notation
$$ \langle f, g \rangle = \int_X f(x) g(x) dx. $$

Let $\D(X)$, resp., $\E(X)$ denote the vector space of smooth
functions on $X$, resp., smooth functions with compact support,
equipped with the topology of uniform convergence of all derivatives on compact sets, see \cite[Ch. II \S 2]{Hel-84} for instance.

\begin{dfn}
  For every $x \in X$, the {\em spherical projector} $\pi_x: \E(X) \to
  \E(X)$ is defined by
  $$ (\pi_x f)(y) := \frac{1}{\vol(S_r(x))} \int_{S_r(x)} f \quad
  \text{with $r=d(x,y)$,}
  $$
  where $S_r(x)$ denotes the geodesic sphere around $x$ with radius $r$.

  Let $\E_0(X,x) := \pi_x(\E(X))$ and $\D_0(X,x) :=
  \pi_x(\D(X))$. Functions in these spaces are called {\em radial
    functions} about $x$. We simply write $\pi, \E_0(X), \D_0(X)$ for
  $\pi_{x_0}, \E_0(X,x_0), \D_0(X,x_0)$. A radial function
  $f\in\E_0(X)$ is of the form
  \begin{equation} \label{eq:radfuncx0}
    f(x)=\tilde{f}(r(x))=\tilde{f}(d(x,x_0))
  \end{equation}
  with some even function $\tilde{f}\in\E_0(\RR)$. We often do not
  distinguish between $f$ and $\tilde{f}$ in our notation, i.e. we
  simply write $f(x)=f(r(x))$.
\end{dfn}

We now present basic properties of the spherical projector. The first
property below is obvious, and the second identity can be found, e.g.,
in \cite[Lemme 2]{Rou-03}.

\begin{lem} \label{prop:avprop}
  For $x \in X$, the operator $\pi_x$ has the following properties:
  \begin{eqnarray}
  \pi_x^2&=&\pi_x, \label{eq:idem}\\
  \langle \pi_x f, g \rangle&=&\langle f, \pi_x g \rangle.
  \end{eqnarray}
\end{lem}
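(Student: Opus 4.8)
The plan is to verify both identities by direct computation, the only real ingredient being the decomposition of the volume element in geodesic polar coordinates about $x$. I would begin by recording the structural fact on which everything rests: for any $f \in \E(X)$ the function $\pi_x f$ is, by construction, constant on each geodesic sphere $S_r(x)$, its value there being the number $\frac{1}{\vol(S_r(x))}\int_{S_r(x)} f$. Identity \eqref{eq:idem} is then immediate: applying $\pi_x$ a second time averages over $S_r(x)$ a function that is already constant on $S_r(x)$, so the average returns that same constant, i.e. $\pi_x(\pi_x f) = \pi_x f$. (This step uses nothing about harmonicity, only that $\pi_x$ produces functions depending on $x$ solely through $d(x,\cdot)$.)

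For the self-adjointness identity I would pass to geodesic polar coordinates $(r,\xi) \mapsto \exp_x(r\xi)$, $r > 0$, $\xi$ in the unit sphere $S_x \subset T_xX$, which parametrise the complement of $\{x\} \cup \mathrm{Cut}(x)$, a subset of full measure in $X$. Writing the volume element as $d\vol = \theta(r)\,dr\,d\xi$ — harmonicity makes the density independent of $\xi$, and $\vol(S_r(x)) = \omega_n\theta(r)$ with $\omega_n = \vol(S^n)$ — one has $(\pi_x f)(\exp_x(r\xi)) = c_f(r) := \frac{1}{\omega_n}\int_{S_x} f(\exp_x(r\eta))\,d\eta$, a quantity independent of $\xi$. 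Hence
$$ \langle \pi_x f, g \rangle = \int_0^\infty c_f(r)\,\theta(r) \Big( \int_{S_x} g(\exp_x(r\xi))\,d\xi \Big)\,dr = \omega_n \int_0^\infty c_f(r)\,c_g(r)\,\theta(r)\,dr, $$
which is manifestly symmetric under interchanging $f$ and $g$. This gives $\langle \pi_x f, g \rangle = \langle f, \pi_x g \rangle$, as claimed.

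I do not expect a genuine obstacle here; the statement is essentially bookkeeping, and I would present it as such. The two points deserving a word of care are: (a) that $\pi_x$ really maps $\E(X)$ into $\E(X)$, i.e. $\pi_x f$ is smooth, including at the centre $y = x$ — but this is a property of the \emph{definition} of the spherical projector, following from harmonicity (equivalently, analyticity together with the mean value interpretation), rather than part of proving the two displayed identities; and (b) the legitimacy of the change of variables to polar coordinates and of the interchange of integrations in the computation of $\langle \pi_x f, g\rangle$, which is immediate since $\{x\}\cup\mathrm{Cut}(x)$ is a null set and, at least one of $f,g$ having compact support while both are continuous, every integral appearing is absolutely convergent. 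Thus the entire content of the lemma is the elementary symmetry visible in the displayed formula, together with the triviality of re-averaging an already-radial function.
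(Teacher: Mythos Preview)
Your argument is correct. The paper itself does not prove this lemma: it simply declares the idempotency obvious and cites \cite{Rou-03} for the self-adjointness, so there is no ``paper's approach'' to compare against. Your direct computation in geodesic polar coordinates is exactly the standard proof one finds when the reference is unpacked. One minor remark: in the setting of the paper, $X$ is simply connected and without conjugate points, so $\mathrm{Cut}(x)=\emptyset$ and the exponential map is a global diffeomorphism; you need not carve out the cut locus at all. Otherwise the proposal is clean and complete.
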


The Laplacian $\Delta = {\rm div} \circ {\rm grad}$, applied to a
radial function $f \in \E_0(X)$, can be written as
\begin{equation} \label{eq:Lappolcoord}
(\Delta f)(r(x)) = f''(r(x)) + \frac{\theta'(r(x))}{\theta(r(x))} f'(r(x)).
\end{equation}
It is well known that $\frac{\theta'(r)}{\theta(r)}$ is the mean
curvature of a geodesic sphere $S_r(x)$, and that
$\frac{\theta'(r)}{\theta(r)}$ is a monotone decreasing function
converging to $H$ (see \cite[Cor. 2.1]{RaSha-03}).

Concerning eigenvalues, we follow the sign convention in
\cite{Cha-84}, and call $f \in C^\infty(X)$ an eigenfunction to the
eigenvalue $\mu \in \CC$ if $\Delta f + \mu f = 0$. However, we impose
no growth restriction on eigenfunctions, i.e., we do not require that
$f$ is an $L^2$-function.

We now prove uniqueness and existence of radial eigenfunctions of the
Laplacian. For positive eigenvalues this is shown in \cite{Sz-90}.

\begin{prop}\label{prop:efctLap} For each $\lambda\in\CC$ there is a
  unique smooth function $\varphi_\lambda\in\E_0(X)$ such that
  \begin{equation}\label{eq:evalLapl}
    \Delta\varphi_\lambda + \left( \lambda^2 + \frac{H^2}{4}\right)
    \varphi_\lambda = 0, \quad\text{and}\quad \varphi_\lambda(x_0)=1.
  \end{equation}
  We obviously have $\varphi_{-\lambda} = \varphi_\lambda$ and
  $\varphi_{iH/2}=\varphi_{-iH/2}=1$. Also $\varphi_\lambda(r)$ is
  holomorphic in $\lambda$.
\end{prop}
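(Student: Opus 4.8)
The plan is to reduce the radial eigenvalue equation to a singular ordinary differential equation on the half-line and invoke the standard theory of ODEs with a regular singular point. By equation~\eqref{eq:Lappolcoord}, a radial function $f(x) = \tilde f(r(x))$ satisfies $\Delta f + (\lambda^2 + H^2/4) f = 0$ if and only if the even function $\tilde f$ solves
\begin{equation*}
  \tilde f''(r) + \frac{\theta'(r)}{\theta(r)} \tilde f'(r) + \left(\lambda^2 + \frac{H^2}{4}\right)\tilde f(r) = 0
\end{equation*}
on $(0,\infty)$. Near $r = 0$ the coefficient $\theta'(r)/\theta(r)$ has the form $n/r + O(r)$ (since $\theta(r) = r^n(1 + O(r^2))$ for an $(n{+}1)$-dimensional harmonic space, the expansion being even in $r$), so $r = 0$ is a regular singular point with indicial roots $0$ and $-n$. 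First I would set up the Frobenius analysis: the root $0$ yields a one-parameter family of solutions that are smooth and even at the origin, and normalising by $\tilde f(0) = 1$ picks out a unique such solution; the second root $-n$ gives solutions that blow up like $r^{-n}$ and are therefore excluded by the requirement that $\varphi_\lambda$ be smooth on all of $X$. This simultaneously gives existence and uniqueness of $\varphi_\lambda$ as a germ near $x_0$, and since the ODE has no other singular points on $(0,\infty)$ (because $\theta(r) > 0$ there and $\theta$ is analytic, $X$ being Einstein hence analytic), the solution extends uniquely and smoothly to all of $[0,\infty)$, hence defines a global radial function on $X$.

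The symmetry $\varphi_{-\lambda} = \varphi_\lambda$ is immediate because the equation and normalisation depend on $\lambda$ only through $\lambda^2$. For $\lambda = iH/2$ the eigenvalue $\lambda^2 + H^2/4$ vanishes, so the constant function $1$ solves the equation with the right normalisation, and uniqueness forces $\varphi_{iH/2} = \varphi_{-iH/2} = 1$. For holomorphy in $\lambda$, I would fix $r$ and view the solution as depending on the complex parameter $\lambda^2$ appearing analytically in the coefficients of the ODE; the standard theorem on analytic dependence of solutions of linear ODEs on parameters (applied away from the singular point, and then extended through $r = 0$ using the Frobenius series whose coefficients are polynomials in $\lambda^2$) shows that $\lambda \mapsto \varphi_\lambda(r)$ is entire.

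The main obstacle is handling the behaviour at the regular singular point $r = 0$ carefully, in particular justifying that the smooth even solution is the \emph{unique} smooth one (ruling out the logarithmic solution that Frobenius theory can produce when the indicial roots differ by the integer $n$) and that no admissible solution can have a $r^{-n}$ component. The cleanest way to do this is to note that smoothness of $f$ as a function on the manifold $X$ near $x_0$ forces $\tilde f$ to be even and bounded near $0$, which immediately excludes the $r^{-n}$ branch and, by examining the recursion for the Frobenius coefficients, also pins down that the possible logarithmic term must vanish for a genuine smooth solution; alternatively one can argue directly that $\Delta f + \mu f = 0$ together with smoothness at $x_0$ determines all Taylor coefficients of $f$ at $x_0$ recursively from $f(x_0) = 1$ and radiality, which gives uniqueness, and then convergence of the resulting power series (guaranteed by analyticity of $\theta$) gives existence. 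I would present the Taylor-coefficient recursion argument as the primary proof, since it transparently yields uniqueness, existence, and polynomial-in-$\lambda^2$ dependence of the coefficients all at once, and only invoke classical ODE theory to extend the locally defined solution to all of $(0,\infty)$.
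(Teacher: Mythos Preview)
Your approach via Frobenius theory at the regular singular point $r=0$ is correct and complete in outline (one small slip: with $\theta(r)\sim r^n$ the indicial equation is $\alpha(\alpha+n-1)=0$, so the roots are $0$ and $1-n$, not $-n$; this does not affect the argument). The paper, however, takes a different and somewhat slicker route: it rewrites the equation as $(\theta\varphi_\lambda')'=L\theta\varphi_\lambda$ with $L=-(\lambda^2+H^2/4)$, integrates twice to obtain the Volterra integral equation of the second kind
\[
\varphi_\lambda(r)=1+L\int_0^r q(r,r_1)\varphi_\lambda(r_1)\,dr_1,\qquad q(r,r_1)=\int_{r_1}^r\frac{\theta(r_1)}{\theta(r_2)}\,dr_2,
\]
and uses the monotonicity of $\theta$ to bound $0\le q(r,r_1)\le r-r_1$. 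Standard Volterra theory then gives existence and uniqueness in one stroke, and iterating yields a power series \emph{in $L$} (not in $r$), namely $\varphi_\lambda(r)=1+\sum_{k\ge1}a_k(r)L^k$ with $0\le a_k(r)\le r^{2k}/(2k)!$, so holomorphy in $\lambda$ and global existence come for free with explicit bounds. The advantage of the paper's method is that it sidesteps entirely the delicate case analysis you flag as the ``main obstacle'' (integer difference of indicial roots, possible logarithmic terms), requires no separate extension step from a local germ to all of $[0,\infty)$, and makes the entire dependence on $\lambda$ transparent. Your approach, on the other hand, is the more classical ODE viewpoint and makes the link to the geometry of $X$ near $x_0$ (smoothness forcing evenness and boundedness of $\tilde f$) more explicit.
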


\begin{proof}
  We fix $\lambda\in\CC$ and abbreviate $L:= -\left( \lambda^2 +
    \frac{H^2}{4}\right)$. The eigenvalue equation \eqref{eq:evalLapl}
  translates to
  $$\varphi_\lambda''+\frac{\theta'}{\theta}\varphi_\lambda' =
  \frac{(\theta
    \varphi_\lambda')'}{\theta}=L\varphi_\lambda\quad\text{and}\quad
  \varphi_\lambda(0)=1.$$
  Integrating twice we get that this is equivalent to
  \begin{equation}\label{eq:radevalLaplint}
    \varphi_\lambda(r)=\varphi_\lambda(0) +
    L\int_0^r\frac{1}{\theta(r_2)}
    \int_0^{r_2}\theta(r_1)\varphi_\lambda(r_1)dr_1 dr_2=
    1+L\int_0^r q(r,r_1)\varphi_\lambda(r_1)dr_1,
  \end{equation}
  where
  $$q(r,r_1)=\int_{r_1}^r\frac{\theta(r_1)}{\theta(r_2)}dr_2.$$
  By \cite[Prop. 2.2]{RaSha-03} the function $\theta \ge 0$ increases,
  hence $0\leq q(r,r_1)\leq r-r_1$, and the Volterra integral equation
  of the second kind \eqref{eq:radevalLaplint} has a unique solution
  (see \cite[Thm 5]{Hoch-73}). In order to obtain a power series in
  $L$ for $\varphi_\lambda$, we use \eqref{eq:radevalLaplint}
  iteratively, starting with the constant function $1$, and obtain
  $$\varphi_\lambda(r)=1+\sum_{k=1}^\infty a_k(r) L^k$$
  with coefficients
  $$a_k(r)=\int_{r\geq r_1\geq r_2\geq\cdots r_k\geq 0} q(r,r_1)q(r_1,r_2) \cdots
  q(r_{k-1},r_k) dr_1\ldots dr_k.$$
  Since
  $$ 0 \le a_k(r) \le \int_{r\geq r_1\geq r_2\geq\cdots r_k\geq 0}
  (r-r_1)(r_1-r_2)    \cdots   (r_{k-1}-r_k)    dr_1\ldots   dr_k    =
  \frac{r^{2k}}{(2k)!}, $$
  the power series above converges for all $L\in\CC$.
\end{proof}


The following lemma can be found in \cite[Lemma 1.1]{Sz-90}:

\begin{lem} \label{lem:piDe}
  We have
  $$ \pi_x \circ \Delta = \Delta \circ \pi_x. $$
\end{lem}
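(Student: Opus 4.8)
The plan is to check the operator identity pointwise, by passing to geodesic polar coordinates centred at the point $x$. Fix $x\in X$. Since non-compact harmonic spaces have no conjugate points, $\exp_x\colon T_xX\to X$ is a diffeomorphism, so every $y\neq x$ is uniquely $y=\exp_x(r\xi)$ with $r=d(x,y)>0$ and $\xi$ in the unit sphere $\Sigma_x\subset T_xX$. Harmonicity means the volume density in these coordinates is $\theta(r)$, independent of $\xi$; hence $d\vol=\theta(r)\,dr\,d\xi$, the induced measure on $S_r(x)$ is $\theta(r)\,d\xi$, and $\vol(S_r(x))=\omega_n\theta(r)$, where $\omega_n$ is the volume of the unit sphere in $T_xX$. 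Thus the normalised spherical measure is simply $\omega_n^{-1}d\xi$, and $(\pi_x f)(y)=F(r)$ with $F(r):=\omega_n^{-1}\int_{\Sigma_x}f(\exp_x(r\xi))\,d\xi$, a radial function about $x$ which, because $\pi_x$ maps $\E(X)$ into $\E(X)$, extends smoothly across $r=0$.

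Next I would invoke the standard expression for the Laplace--Beltrami operator in geodesic polar coordinates,
$$ \Delta f=\partial_r^2 f+\frac{\theta'(r)}{\theta(r)}\,\partial_r f+\Delta_{S_r(x)}f, $$
where $\Delta_{S_r(x)}$ is the intrinsic Laplacian of $S_r(x)$ with its induced metric (here harmonicity is used to make the coefficient of $\partial_r f$ depend on $r$ only). Averaging this identity over $S_r(x)$: the $r$-derivatives pass through the integral over the fixed sphere $\Sigma_x$ in the $\xi$-variable, while the tangential term drops out because $\int_{S_r(x)}\Delta_{S_r(x)}f\,d\sigma=0$ by Green's formula on the closed manifold $S_r(x)$. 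This gives
$$ (\pi_x\Delta f)(y)=F''(r)+\frac{\theta'(r)}{\theta(r)}\,F'(r). $$
On the other hand, $\pi_x f$ is radial about $x$, and in a harmonic space the density function $\theta$ is the same about every point, so formula \eqref{eq:Lappolcoord} applied with centre $x$ yields exactly $(\Delta\pi_x f)(y)=F''(r)+\frac{\theta'(r)}{\theta(r)}\,F'(r)$. Comparing the two displays proves $\pi_x\Delta f=\Delta\pi_x f$ on $X\setminus\{x\}$, and by continuity of both sides on all of $X$.

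The genuinely routine ingredients are the polar-coordinate form of $\Delta$ and the vanishing of the tangential average via Green's formula. The only point requiring a little care — and, I expect, the closest thing to an obstacle — is the legitimacy of using the radial Laplacian identity \eqref{eq:Lappolcoord} with an arbitrary centre $x$ rather than the fixed reference point $x_0$; this is justified precisely because harmonicity makes $\theta(r)$ independent of the centre, so the radial ODE operator attached to any point is the same. Beyond that, one simply records that $F$ is smooth at $r=0$ so that the identity extends to $y=x$. In essence this lemma is the Darboux equation for the spherical mean operator on a harmonic manifold, and I anticipate no serious difficulty.
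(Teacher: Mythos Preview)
Your argument is correct. The paper does not actually prove this lemma; it merely quotes it from \cite[Lemma~1.1]{Sz-90}. What you have written is essentially the standard proof (and presumably close to Szabo's): the identity is the Darboux equation for spherical means, obtained by expanding $\Delta$ in geodesic polar coordinates about $x$, noting that harmonicity forces the radial coefficient $\theta'/\theta$ to be independent of the angular variable, and killing the tangential part $\Delta_{S_r(x)}f$ under the spherical average via the divergence theorem on the closed sphere. Your remark that \eqref{eq:Lappolcoord} may be recentred at any $x$ because $\theta$ is the same for every centre is exactly the point. There is nothing to correct.
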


An immediate consequence of Lemma \ref{lem:piDe} is the fact that
if $\Delta f + \mu f =0$ and $f\neq 0$ is radial, then $g:= \pi_x f \in
\E(X,x)$ is also an eigenfunction of $\Delta$ with eigenvalue
$\mu$.

The {\em displacement} of a radial function $f \in \E_0(X,x)$ at a
point $y \in X$ is denoted by $f_y \in \E_0(X,y)$ and defined by
$$ f_y(z) := \widetilde f(d(y,z)), $$
with $\widetilde f(d(x,y)) = f(y)$. We have $f_y(z) =
f_z(y)$.

\begin{lem}
  The displacement $(\varphi_\lambda)_x$ of an eigenfunction
  $\varphi_\lambda$, is again an eigenfunction to the same
  eigenvalue and
  $$
  \pi ((\varphi_\lambda)_x) = \varphi_\lambda(x) \varphi_\lambda.
  $$
\end{lem}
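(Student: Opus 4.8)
The plan is to exploit two structural facts already available: in a harmonic space the radial part of the Laplacian about \emph{any} point is governed by the \emph{same} ordinary differential operator, since the density $\theta$ depends only on the radius, and by Lemma \ref{lem:piDe} the spherical projector $\pi=\pi_{x_0}$ intertwines $\Delta$ with itself. I would first dispose of the eigenfunction claim. Writing $(\varphi_\lambda)_x(z)=\widetilde\varphi_\lambda(d(x,z))$, this is by definition a radial function about $x$, and it is smooth because $\widetilde\varphi_\lambda$ extends to an even smooth function on $\RR$ (Proposition \ref{prop:efctLap}). Formula \eqref{eq:Lappolcoord} holds verbatim around the point $x$ with the same $\theta$, and $\widetilde\varphi_\lambda$ solves the corresponding radial eigenvalue ODE; hence $\Delta (\varphi_\lambda)_x + (\lambda^2+\tfrac{H^2}{4})(\varphi_\lambda)_x = 0$.

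Next I would apply $\pi$ and identify the result. Since $\pi$ maps $\E(X)$ into $\E_0(X)$ and commutes with $\Delta$ by Lemma \ref{lem:piDe}, the function $h:=\pi\big((\varphi_\lambda)_x\big)\in\E_0(X)$ is a radial eigenfunction about $x_0$ to the eigenvalue $\lambda^2+\tfrac{H^2}{4}$. Then $h - h(x_0)\,\varphi_\lambda$ is again such a radial eigenfunction and vanishes at $x_0$; by the uniqueness part of Proposition \ref{prop:efctLap} — concretely, the Volterra integral equation \eqref{eq:radevalLaplint} with initial value $0$ in place of $1$ has only the zero solution — this difference vanishes identically, so $h = h(x_0)\,\varphi_\lambda$. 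Finally I would compute the constant: taking $y=x_0$ in the definition of the spherical projector, the radius is $r=d(x_0,x_0)=0$, so $h(x_0) = \big(\pi((\varphi_\lambda)_x)\big)(x_0) = (\varphi_\lambda)_x(x_0) = \widetilde\varphi_\lambda(d(x,x_0)) = \varphi_\lambda(x)$, which gives $\pi\big((\varphi_\lambda)_x\big) = \varphi_\lambda(x)\,\varphi_\lambda$.

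I do not expect a serious obstacle here; the argument is a soft consequence of harmonicity plus Lemma \ref{lem:piDe} and Proposition \ref{prop:efctLap}. The only points needing a line of care are: upgrading the ``normalized uniqueness'' of Proposition \ref{prop:efctLap} to the statement that the space of radial eigenfunctions about $x_0$ to a fixed eigenvalue is one-dimensional (immediate from the Volterra equation), and checking that $(\varphi_\lambda)_x$ is smooth enough to lie in $\E(X)$, the domain of $\pi$, which again follows from the smoothness of the even extension of $\widetilde\varphi_\lambda$.
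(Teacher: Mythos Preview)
Your proof is correct and follows essentially the same approach as the paper: use the center-independence of the radial Laplacian \eqref{eq:Lappolcoord} to see that $(\varphi_\lambda)_x$ is an eigenfunction, apply Lemma~\ref{lem:piDe} and the uniqueness in Proposition~\ref{prop:efctLap} to identify $\pi((\varphi_\lambda)_x)$ as a multiple of $\varphi_\lambda$, and evaluate at $x_0$ to find the constant. The paper computes the constant via the symmetry $f_y(z)=f_z(y)$, i.e.\ $(\varphi_\lambda)_x(x_0)=(\varphi_\lambda)_{x_0}(x)=\varphi_\lambda(x)$, while you unwind the definition of $\pi$ at radius $0$; these amount to the same thing.
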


\begin{proof}
  Because of the representation \eqref{eq:Lappolcoord} of the
  Laplacian in polar coordinates, which is independent of the center,
  the displacement $(\varphi_\lambda)_x$ is also an eigenfunction to
  the eigenvalue $\mu = (\lambda^2 + H^2/4)$. From $\Delta \circ \pi =
  \pi \circ \Delta$ (Lemma \ref{lem:piDe}) we conclude that $\pi
  ((\varphi_\lambda)_x)$ is a radial eigenfunction about $x_0$ to the
  eigenvalue $\mu$ and, by uniqueness, a multiple of
  $\varphi_\lambda$. We have
  $$ (\pi ((\varphi_\lambda)_x))(x_0) = (\varphi_\lambda)_x(x_0) =
  (\varphi_\lambda)_{x_0}(x) = \varphi_\lambda(x), $$
  finishing the proof of the lemma.
\end{proof}

Next, we recall an important property of the convolution of radial
kernel functions in harmonic spaces (see \cite{Sz-90}). A smooth
function $F: X \times X \to \CC$ is called a {\em radial kernel
  function} if there is a function $\widetilde f: [0,\infty) \to \CC$
such that
$$
F(x,y) = \widetilde f(d(x,y)) \quad \text{for all $x,y \in X$.}
$$
$F$ is called {\em of compact support}, if there is a radius $R >
0$ such that $F(x,y) = 0$ for all $d(x,y) \ge R$.

\begin{prop} (see \cite[Prop. 2.1]{Sz-90}) \label{prop:szabo}
  Let $F,G: X \to X \to \CC$ be two radial kernel functions, one of
  them of compact support. Then the convolution
  $$ F*G(x,y) := \int_X F(x,z) G(z,y) dy $$
  is, again, a radial kernel function, i.e. $F*G(x,y)$ depends only
  on the distance $d(x,y)$.
\end{prop}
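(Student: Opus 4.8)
The plan is to fix $x,y\in X$ and evaluate the convolution in geodesic polar coordinates around $x$. Writing $z=\exp_x(sv)$ with $s\ge 0$ and $v$ a unit vector, so that $dz=\theta(s)\,ds\,dv$, and using that $F(x,z)=\tilde f(s)$ is constant on each geodesic sphere $S_s(x)$, we obtain
$$ F*G(x,y)=\int_0^\infty \tilde f(s)\,\theta(s)\Big(\int_{S^n}\tilde g\big(d(\exp_x(sv),y)\big)\,dv\Big)\,ds, $$
the integral being absolutely convergent since one of the two kernels has compact support. Hence it suffices to show that the inner bracket depends only on $s$ and on $\rho:=d(x,y)$; equivalently, that the spherical mean of the radial function $\tilde g(d(\cdot,y))$ about $y$ over the sphere $S_s(x)$ depends only on $(s,\rho)$. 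Writing this mean as $\int_{[0,\infty)}\tilde g\,d\mu_{s,x,y}$, where $\mu_{s,x,y}$ is the push-forward under $w\mapsto d(w,y)$ of the normalised surface measure of $S_s(x)$ — a Borel probability measure supported in $[\,|s-\rho|,\,s+\rho\,]$ — it is enough to prove that $\mu_{s,x,y}$ itself depends only on $(s,\rho)$.

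First I would treat $\tilde g=\varphi_\lambda$. Since the polar-coordinate expression \eqref{eq:Lappolcoord} for $\Delta$, and hence the defining equation \eqref{eq:evalLapl} for $\varphi_\lambda$, is independent of the centre, and since $\pi_x\circ\Delta=\Delta\circ\pi_x$, the displacement lemma for eigenfunctions proved above holds with $x_0$ replaced by $x$; this gives $\pi_x\big((\varphi_\lambda)_y\big)=\varphi_\lambda(d(x,y))\,(\varphi_\lambda)_x$, i.e.
$$ \int_{[0,\infty)}\varphi_\lambda\,d\mu_{s,x,y}=\varphi_\lambda(\rho)\,\varphi_\lambda(s), $$
which manifestly depends only on $(s,\rho)$. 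Now use the power series $\varphi_\lambda=1+\sum_{k\ge 1}a_k L^k$ of Proposition \ref{prop:efctLap}, with $L=-(\lambda^2+H^2/4)$ and $0\le a_k(r)\le r^{2k}/(2k)!$. This series converges in $\E(X)$ after composing with $d(\cdot,y)$, so applying the continuous operator $\pi_x$ termwise, evaluating at a point $z$ with $d(x,z)=s$, and comparing coefficients of $L^k$ with those of the product $\varphi_\lambda(\rho)\varphi_\lambda(s)$, we get (with $a_0:=1$)
$$ \int_{[0,\infty)}a_k\,d\mu_{s,x,y}=\sum_{i+j=k}a_i(s)\,a_j(\rho). $$
Thus the claim holds for $\tilde g=a_k$ for every $k$, hence for every finite linear combination of the $a_k$ — equivalently, for every finite linear combination of the $\varphi_\lambda$.

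To reach an arbitrary radial kernel $G$, I would use that $\tilde g\mapsto\int\tilde g\,d\mu_{s,x,y}$ is, for fixed $(s,x,y)$, continuous for the topology of uniform convergence on compact intervals (it is integration against a probability measure), together with the density of $\mathrm{span}\{\varphi_\lambda:\lambda\in\CC\}=\mathrm{span}\{a_k:k\ge 0\}$ in $\E_0(\RR)$. Given such density, approximate $\tilde g$ by finite combinations $\tilde g_m\to\tilde g$ uniformly on compact intervals; since $\mu_{s,x,y}$ acts on each $\tilde g_m$ through a quantity depending only on $(s,\rho)$, the same holds in the limit for $\tilde g$, which is exactly the reduction above. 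Therefore $F*G(x,y)$ depends only on $d(x,y)$, i.e.\ $F*G$ is a radial kernel function.

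The main obstacle is the density step. It can be reduced to showing that every even power $r^{2m}$ lies in the closure of $\mathrm{span}\{a_k\}$ in $C[0,L]$, since even polynomials are dense there by the Stone--Weierstrass theorem ($r\mapsto r^2$ separating the points of $[0,L]$). This in turn follows from the triangular relation $a_k(r)=\frac{r^{2k}}{(2k)!}+O(r^{2k+2})$ near $r=0$ — immediate from the iterated-integral formula for the $a_k$ — together with the real-analyticity of $\theta$ (harmonic spaces are Einstein, hence analytic) and the uniform bounds $0\le a_k(r)\le r^{2k}/(2k)!$, which allow one to invert the triangular system on a fixed interval. Alternatively, this density is a direct consequence of the injectivity and inversion of the spherical Fourier transform established in Section 3, so the proposition may also be placed logically after that development.
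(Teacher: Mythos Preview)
The paper does not give its own proof of this proposition; it simply cites Szab\'o. Your overall strategy---reducing to the multiplicative identity $\pi_x((\varphi_\lambda)_y)=\varphi_\lambda(d(x,y))\,(\varphi_\lambda)_x$ and then arguing by density---is essentially Szab\'o's, and your reduction to the push-forward measures $\mu_{s,x,y}$ together with the computation $\int\varphi_\lambda\,d\mu_{s,x,y}=\varphi_\lambda(s)\varphi_\lambda(\rho)$ is correct.

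The genuine gap is the density step. Your triangular-inversion argument does not work as written. First, the leading coefficient of $a_k$ is not $1/(2k)!$: from $\theta(r)\sim r^n$ one computes $a_1(r)\sim r^2/(2(n+1))$, so the claimed asymptotic $a_k(r)=r^{2k}/(2k)!+O(r^{2k+2})$ is false (though the system is still lower-triangular with nonzero diagonal). More seriously, knowing the Taylor expansions of the $a_k$ at $0$ does not by itself allow you to express $r^{2m}$ as a \emph{uniformly convergent} series $\sum_k c_k a_k$ on a fixed interval $[0,L]$; you would need growth control on the inverse coefficients that you have not supplied. So this route, as it stands, is incomplete.

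Your second alternative---invoking the injectivity of the spherical Fourier transform from Section~\ref{sec:abelfourier}---is actually sound and not circular, but this deserves explicit justification: one must check that Theorem~\ref{thm:abij} and Proposition~\ref{prop:ATFT} are proved in the paper without using the convolution structure, hence without using the present proposition. They are. Granting that, any compactly supported signed measure $\nu$ on $[0,\infty)$ with $\int\varphi_\lambda\,d\nu=0$ for all $\lambda$ defines $T_\nu\in\E_0'(X)$ with $\F T_\nu=0$; then $\widehat{\A T_\nu}=0$, so $\A T_\nu=0$, so $T_\nu=0$ by Theorem~\ref{thm:abij}, hence $\nu=0$. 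Applied to $\nu=\mu_{s,x,y}-\mu_{s,x',y'}$ with $d(x',y')=d(x,y)$, this finishes your argument. A more self-contained fix, closer to Szab\'o and to the surjectivity proof in Theorem~\ref{thm:abij}, is to use the radial Dirichlet eigenfunctions on a ball $B_R(x_0)$: by Sturm--Liouville theory these $\varphi_{\lambda_k}$ form a complete system in $L^2([0,R],\theta\,dr)$, which yields the required density directly and keeps the proposition logically prior to Section~\ref{sec:abelfourier}.
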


Every radial function $f \in \E_0(X)$ is in one-one correspondence with
a radial kernel function $F: C^\infty(X \times X)$ via
\begin{eqnarray*}
F(y,z) &=& \widetilde f(d(y,z)), \\
f(x) &=& F(x_0,x),
\end{eqnarray*}
where $\widetilde f$ was introduced in \eqref{eq:radfuncx0}. This
correspondence leads to a natural convolution $f * g$ of radial
functions $f,g \in \E_0(X)$, and even to an extension of this
notion if only one of the two functions is radial:

\begin{dfn}
  Let $f, g \in \E(X)$, one of them with compact support, and one of
  them radial about $x_0$. If $f$ is the radial function, the
  convolution $f * g \in \E(X)$ is defined as
  $$ f*g(y) := \langle f_y, g \rangle = \int_X f_y(z) g(z) dz\ .$$
  Similarly, if $g$ is the radial function, we define
  $$ f*g(y) := \langle f, g_y \rangle = \int_X f(z) g_y(z) dz\ .$$
\end{dfn}

\begin{rem}
  The convolution $f * g$ is well defined, since if both
  $f,g$ are radial, we have
  $$ \int_X f_y(z) g(z) dz = F*G(y,x_0) {\quad} \text{and}\;
  \int_X f(z) g_y(z) dz = F*G(x_0,y), $$
  where $F$ and $G$ are the radial kernel functions associated to
  $f,g$. By Proposition \ref{prop:szabo}, we have $F*G(y,x_0) =
  F*G(x_0,y)$. Moreover, the definition immediately implies
  commutativity of the convolution.
\end{rem}

The following two lemmata are further consequences of Proposition
\ref{prop:szabo}. We omit the proofs which are straightforward, once
the statements are reformulated in terms of radial kernel functions.

\begin{lem} \label{lem:convrad}
  Let $f, g \in \D_0(X)$. Then $f * g \in \D_0(X)$.
\end{lem}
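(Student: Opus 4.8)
The plan is to reformulate the statement in terms of radial kernel functions and then invoke Proposition \ref{prop:szabo}. Let $F, G \in C^\infty(X \times X)$ be the radial kernel functions associated to $f$ and $g$, so that $F(y,z) = \widetilde f(d(y,z))$, $G(y,z) = \widetilde g(d(y,z))$, and $f(x) = F(x_0,x)$, $g(x) = G(x_0,x)$. Since $f, g \in \D_0(X)$ have compact support, there are radii $R_f, R_g > 0$ with $F(y,z) = 0$ for $d(y,z) \ge R_f$ and $G(y,z) = 0$ for $d(y,z) \ge R_g$; in particular $F$ and $G$ are radial kernel functions of compact support. By Proposition \ref{prop:szabo}, the convolution $F*G(y,z) = \int_X F(y,w) G(w,z)\, dw$ is again a (smooth) radial kernel function, say $F*G(y,z) = \widetilde h(d(y,z))$ for some $\widetilde h \colon [0,\infty) \to \CC$, and we set $h(x) := F*G(x_0,x)$.

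Next I would verify that $h \in \D_0(X)$. Smoothness of $h$ on all of $X$, including at $x_0$, is inherited from smoothness of the radial kernel function $F*G$ on $X \times X$. For the support: the integrand $F(y,w)G(w,z)$ vanishes unless $d(y,w) < R_f$ and $d(w,z) < R_g$, which by the triangle inequality forces $d(y,z) < R_f + R_g$; hence $F*G(y,z) = 0$ for $d(y,z) \ge R_f + R_g$, so $h$ has compact support. Unwinding the definition of the convolution as in the Remark following Proposition \ref{prop:szabo} gives $f*g(y) = \langle f_y, g \rangle = F*G(y,x_0) = F*G(x_0,y) = h(y)$, so $f*g = h$ is a smooth, compactly supported radial function about $x_0$. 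Finally, since $h$ is radial we have $\pi h = h$ by \eqref{eq:idem}, hence $h = \pi h \in \pi(\D(X)) = \D_0(X)$.

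I do not expect a genuine obstacle: Proposition \ref{prop:szabo} already carries out the substantive step, namely that the convolution of radial kernel functions remains radial, and what remains is bookkeeping --- propagating the support radii through the triangle inequality and noting that smoothness of $F*G$ on $X \times X$ restricts to smoothness of $h$ on $X$. The one point deserving a moment's attention is the identification of $\D_0(X) = \pi(\D(X))$ with the space of smooth compactly supported radial functions about $x_0$, which rests on $\pi^2 = \pi$. Alternatively, one could establish smoothness of $F*G$ directly by differentiating under the integral sign, using that one of $F, G$ has compact support, but channelling everything through Proposition \ref{prop:szabo} is the cleanest route.
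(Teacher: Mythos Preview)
Your proof is correct and follows exactly the approach the paper indicates: the paper omits the proof, stating only that it is ``straightforward, once the statements are reformulated in terms of radial kernel functions'' and is a consequence of Proposition~\ref{prop:szabo}. You have simply filled in those straightforward details.
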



\begin{lem} \label{lem:assconv23}
  Let $f \in \D(X)$ and $g, h \in \D_0(X)$. Then we have
  \begin{equation} \label{eq:convass}
  f*(g*h) = (f*g)*h.
  \end{equation}
\end{lem}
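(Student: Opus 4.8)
The plan is to reduce the associativity identity \eqref{eq:convass} to the associativity of convolution of radial kernel functions, which in turn follows from the elementary Fubini-type rearrangement of the defining triple integral. Concretely, let $G,H\in C^\infty(X\times X)$ be the radial kernel functions associated to $g,h\in\D_0(X)$, so that $G(y,z)=\widetilde g(d(y,z))$ and $H(y,z)=\widetilde h(d(y,z))$. By Lemma \ref{lem:convrad} the convolution $g*h$ is again radial about $x_0$, and unwinding the definition of the radial convolution $g*h$ via kernels shows that its associated radial kernel function is exactly $G*H$ in the sense of Proposition \ref{prop:szabo}. Since $g,h$ have compact support, $G*H$ is again a radial kernel function of compact support.

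Next I would expand both sides of \eqref{eq:convass} as integrals over $X$. For the left-hand side, using that $g*h$ is radial about $x_0$ with kernel $G*H$,
$$
\bigl(f*(g*h)\bigr)(y)=\int_X f(w)\,(G*H)(w,y)\,dw
=\int_X f(w)\int_X G(w,z)H(z,y)\,dz\,dw.
$$
For the right-hand side, first $f*g$ is defined by $(f*g)(z)=\int_X f(w)G(w,z)\,dw$ (here $g$ is the radial factor), and then convolving with the radial function $h$ gives
$$
\bigl((f*g)*h\bigr)(y)=\int_X (f*g)(z)\,H(z,y)\,dz
=\int_X\int_X f(w)G(w,z)\,dw\;H(z,y)\,dz.
$$
The two expressions agree by Fubini's theorem; the interchange of the order of integration is justified because $f$, and one of $G,H$, have compact support, so the integrand is a compactly supported smooth function on $X\times X$ and everything is absolutely integrable. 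One should be slightly careful about the convention for which slot of the kernel is ``fixed'' in each convolution and about the symmetry $F*G(y,x_0)=F*G(x_0,y)$ from Proposition \ref{prop:szabo}, but these are bookkeeping matters that the remark preceding the lemma has already settled.

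The only genuine point requiring attention — and the step I would treat most carefully — is the identification of the radial kernel function of $g*h$ with $G*H$, i.e. verifying that the ``natural convolution of radial functions'' coincides with Szabo's kernel convolution under the correspondence $f\leftrightarrow F$. This is where the harmonicity hypothesis enters, via Proposition \ref{prop:szabo}: without it, $G*H$ would not be a radial kernel function and the statement would not even typecheck. Once that identification is in hand, the proof is the routine Fubini computation sketched above, which is exactly why the authors say they ``omit the proofs which are straightforward, once the statements are reformulated in terms of radial kernel functions.''
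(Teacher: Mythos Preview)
Your proposal is correct and follows exactly the approach the paper indicates: the authors explicitly omit the proof, saying it is ``straightforward, once the statements are reformulated in terms of radial kernel functions,'' and your reduction to the Fubini identity for $G*H$ via Proposition \ref{prop:szabo} is precisely that reformulation. The bookkeeping you flag (identifying the kernel of $g*h$ with $G*H$, and the symmetry of radial kernels) is handled correctly.
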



Now, let ${\D}'(X)$ and ${\E}'(X)$ be dual spaces of $\E(X)$ and
$\D(X)$, i.e. the space of distributions and the space of
distributions with compact support. For their topologies we refer,
again, to \cite[Ch. II \S 2]{Hel-84}. Let $\E_0'(X)$ and $\D_0'(X)$ be
the corresponding subspaces of radial distributions. The
spherical projector and the convolution are continuous and that
$\D_0(X)$ embeds canonically into $\E_0'(X)$ via $f \mapsto T_f$ where
$\langle T_f, g \rangle := \int_X f(x) g(x) dx$. For $T \in \E'(X)$
and $f \in \E_0(X)$, $T*f$ can be interpreted as a function in
$\E(X)$, i.e.,
$$ T*f(x) = \langle T, f_x \rangle. $$
Since $\D_0(X)$ lies dense in $\E_0'(X)$ (this follows by using a
Dirac sequence $\rho_\epsilon \in \D_0(X)$ and $T*\rho_\epsilon \to
T$), all above properties for functions carry over to distributions
(as, for instance, the fact that the convolution of two radial distributions
is radial, or the associativity of the convolution of radial
distributions.)

Since $\int_X g_y(z) f(z)dz = \int_X g(z) (\pi_y f)_{x_0}(z)$ for $g
\in \D_0(X)$ and $f \in \E(X)$, the convolution of $T\in\E_0'(X)$ and
$f\in\E(X)$ is given by
\begin{equation} \label{eq:Tconvf}
T*f(y) :=\left \langle T, (\pi_yf)_{x_0} \right\rangle  =
\left\langle T, z \mapsto \frac{1}{\vol(S_{r(z)}(y))}
\int_{S_{r(z)}(y)}f(x)dx \right\rangle.
\end{equation}
As an example, consider the distribution $T_r\in\E_0'(X)$, given by
$\langle T_r,f\rangle=\int_{S_r}f$. If $f\in\E(X)$ we obtain
\begin{equation} \label{eq:Trexamp}
T_r*f(y)= \left \langle T_r,
z \mapsto \frac{1}{\vol(S_{r(z)}(y))} \int_{S_{r(z)}(y)} f
\right \rangle = \int_{S_r(y)}f.
\end{equation}

The convolution of two radial distribution $S, T \in \E_0'(X)$
lies in $\E_0'(X)$ and can be written as follows: If $f \in \E_0(X)$, we
have
$$
\langle S*T, f \rangle = \langle S, x \mapsto \langle T,f_x
\rangle \rangle.
$$
For general $f \in \E(X)$, we have
$$
\langle S*T, f \rangle = \langle S*T, \pi f \rangle\ (= \langle S,
x \mapsto \langle T,(\pi f)_x \rangle \rangle).
$$

\begin{prop}\label{prop:pimodprop}
For $T\in\E_0'(X)$ and $f\in\E(X)$ we have
$$\pi (T*f)=T*(\pi f)\ .$$
\end{prop}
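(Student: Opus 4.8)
The plan is to unravel both sides of the claimed identity using the definitions of the convolution $T*f$ for $T\in\E_0'(X)$, $f\in\E(X)$ given in \eqref{eq:Tconvf}, together with the idempotency and self-adjointness of the spherical projector from Lemma \ref{prop:avprop}, and the commutation $\pi\circ\Delta=\Delta\circ\pi$ is not needed here — what is needed is rather that $\pi$ commutes with the averaging inherent in the convolution. Concretely, for a test point $y\in X$ the value $(T*f)(y)=\langle T,(\pi_y f)_{x_0}\rangle$, and applying $\pi=\pi_{x_0}$ to the function $y\mapsto (T*f)(y)$ produces, at a point $y$ with $r=d(x_0,y)$, the spherical average $\frac{1}{\vol(S_r)}\int_{S_r}(T*f)$. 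The goal is to show this equals $(T*(\pi f))(y)=\langle T,(\pi_y(\pi f))_{x_0}\rangle$.

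First I would reduce everything to an identity inside the argument of $T$. Writing out $\pi(T*f)(y)$ for $r=r(y)$ as $\frac{1}{\vol(S_r)}\int_{S_r}\langle T,(\pi_{y'}f)_{x_0}\rangle\,dy'$ and pulling the (continuous, linear) functional $T$ out of the integral over the compact sphere $S_r$, this becomes $\langle T,\, z\mapsto \frac{1}{\vol(S_r)}\int_{S_r}(\pi_{y'}f)_{x_0}(z)\,dy'\rangle$. On the other side, $(T*(\pi f))(y)=\langle T,(\pi_y(\pi f))_{x_0}\rangle=\langle T, z\mapsto\frac{1}{\vol(S_{r(z)}(y))}\int_{S_{r(z)}(y)}(\pi f)(x)\,dx\rangle$, and $(\pi f)(x)$ is itself a sphere average of $f$ about $x_0$. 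So the whole statement comes down to a Fubini-type identity between two iterated spherical averages of $f$: averaging first about a moving center $y'$ ranging over $S_r(x_0)$ and then about $x_0$, versus averaging first about $x_0$ and then about $y$.

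The cleanest way to close this is to pass through the radial-kernel-function picture and the associativity results already in the excerpt: the operator $f\mapsto (\pi_y f)_{x_0}$ on radial functions, and more generally the maps implementing $T*(\cdot)$, $\pi(\cdot)$, correspond to convolutions with radial kernels (the projector $\pi$ itself being convolution against the family of normalized sphere measures), and associativity/commutativity of convolution of radial kernels — Proposition \ref{prop:szabo}, Lemmas \ref{lem:convrad}, \ref{lem:assconv23}, extended to distributions as remarked after Lemma \ref{lem:assconv23} — then gives $\pi(T*f)=\pi*(T*f)=(\pi* T)*f=(T*\pi)*f=T*(\pi* f)=T*(\pi f)$, where one uses that $\pi$ commutes with $T$ because both are radial about $x_0$ (convolution of radial distributions is commutative) and that $\pi$ acts as identity on already-radial objects only after the fact, not before. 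Alternatively, one argues by density: by \eqref{eq:Trexamp} it suffices to verify the identity when $T=T_\rho$ is the distribution given by a test function $\rho\in\D_0(X)$, since such $T$ are dense in $\E_0'(X)$ and both sides are continuous in $T$; for $T=T_\rho$ the identity $\pi(\rho*f)=\rho*(\pi f)$ follows from associativity and commutativity, $\pi(\rho*f)=\pi*(\rho*f)=(\pi*\rho)*f=(\rho*\pi)*f=\rho*(\pi*f)=\rho*(\pi f)$.

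The main obstacle I anticipate is bookkeeping rather than a conceptual difficulty: one must be careful that $\pi f$ for a non-radial $f\in\E(X)$ still lies in $\E(X)$ and that all the convolutions written are legitimate (one factor radial, the relevant support/growth conditions met), and one must make sure the interchange of $T$ with the integration over a geodesic sphere is justified — this is fine because $S_r(x_0)$ is compact and the integrand depends smoothly on the center, so the sphere average is a weak (Bochner / Riemann-sum) integral of $\E(X)$-valued functions to which the continuous functional $T$ may be applied termwise. Once that interchange is in hand, the identity is forced by the commutativity and associativity of radial convolution already established, so I would present the proof in the density form, since it is the shortest: reduce to $T$ coming from $\rho\in\D_0(X)$, invoke Lemma \ref{lem:assconv23} (in the form extended to the case where the first factor is merely in $\D(X)$, or its obvious variant), and conclude.
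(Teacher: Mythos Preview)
Your reduction of the identity to a pointwise statement inside the argument of $T$ is correct and matches the paper's first step (their equation \eqref{eq:pifmx34}). The gap is in how you then propose to prove that reduced identity. You treat the projector $\pi=\pi_{x_0}$ as a convolution, writing chains like $\pi(T*f)=\pi*(T*f)=(\pi*T)*f=\cdots$; but $\pi$ is \emph{not} of the form $S*(\cdot)$ for any $S\in\E_0'(X)$. Indeed, on each radial eigenfunction $\varphi_\lambda$ convolution by $S$ acts as multiplication by the scalar $\F S(\lambda)$, whereas $\pi$ fixes every $\varphi_\lambda$; this would force $\F S\equiv 1$, i.e.\ $S=\delta_{x_0}$, yet $\delta_{x_0}*f=f\neq\pi f$ for non-radial $f$. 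Put differently, the convolution $T*f(y)=\langle T,(\pi_y f)_{x_0}\rangle$ defined in \eqref{eq:Tconvf} always involves averaging $f$ over spheres centred at the \emph{variable} point $y$, while $\pi f$ averages over spheres centred at the \emph{fixed} point $x_0$; no single radial distribution mediates between the two. So neither your direct argument nor your density version goes through, because both rest on the illegitimate identification $\pi g=\pi*g$, and Lemma~\ref{lem:assconv23} simply does not apply.

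The paper proves the reduced identity \eqref{eq:pifmx34} by an entirely different route: first verify it when $f$ is an eigenfunction of the Laplacian (using Lemma~\ref{lem:piDe} and the uniqueness of radial eigenfunctions from Proposition~\ref{prop:efctLap}, which collapses both sides to $f(x_0)\varphi_\lambda(y)\varphi_\lambda(z)$), then approximate a general $f$ uniformly on the relevant compact set by linear combinations of Dirichlet eigenfunctions on a slightly larger domain. What your sketch is missing is precisely this use of the spectral uniqueness of radial eigenfunctions---that is the mechanism in a general (non-homogeneous) harmonic space that replaces the group-theoretic commutation you are implicitly invoking. Your remark that ``$\pi\circ\Delta=\Delta\circ\pi$ is not needed here'' is thus misleading: it is exactly this commutation that drives the correct proof.
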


\begin{proof}
In view of \eqref{eq:Tconvf}, the claim means that
$$[\pi(T*f)](y)=\pi[u\mapsto\langle T, z\mapsto(\pi_u f)_{x_0}(z)\rangle](y)=\langle T, z\mapsto\pi[u\mapsto(\pi_u f)_{x_0}(z)](y)\rangle$$
is equal to
$$[T*(\pi f)](y)=\langle T, (\pi_y\pi f)_{x_0}\rangle$$
for all $y\in X$. To see this, we show that for all $y,z\in X$ we have
\begin{equation}\label{eq:pifmx34}\pi[u\mapsto(\pi_u f)_{x_0}(z)](y) = [(\pi_y\pi f)_{x_0}](z)\ .\end{equation}
We first show \eqref{eq:pifmx34} if $f$ is an eigenfunction of the Laplacian. So assume
$$f\in\E(X)\text{ with }\Delta f+\mu f = 0.$$
Then $\Delta\pi_u f+\mu\pi_u f=0$ and $\pi_u f\in\E(X,u)$ is
radial. Choose $\lambda \in \CC$ such that $\mu = \lambda^2 +
H^2/4$. By the uniqueness of the radial eigenfunctions we get
$$(\pi_u f)_{x_0}=f(u)((\varphi_\lambda)_u)_{x_0}=f(u)\varphi_\lambda.$$
Therefore
$$\pi[u\mapsto(\pi_u f)_{x_0}(z)](y)=\pi[u\mapsto f(u)\varphi_\lambda(z)](y)=\varphi_\lambda(z)[\pi f](y)=\varphi_\lambda(z)\varphi_\lambda(y)f(x_0)$$
and
$$[(\pi_y\pi f)_{x_0}](z)=[(\pi_y[f(x_0)\varphi_\lambda])_{x_0}](z)=f(x_0)\varphi_\lambda(y)\varphi_\lambda(z)\ .$$
In order to show \eqref{eq:pifmx34} for arbitrary functions $f$, note that for fixed $y,z\in X$ the values of both sides of \eqref{eq:pifmx34} depend on the restriction of $f$ to a compact subset $K\subset X$ only. Let $K'$ be a compact subset of $X$ with smooth boundary containing $K$ in its interior. Since $f$ is smooth, we find linear combinations of Dirichlet eigenfunctions of the Laplacian on $K'$ approximating $f$ uniformly on $K$. Since both sides of \eqref{eq:pifmx34} are continuous in $f$ with respect to uniform convergence, this establishes \eqref{eq:pifmx34} for all functions $f$.
\end{proof}

\section{Abel and spherical Fourier transformation}
\label{sec:abelfourier}

We first introduce some fundamental notions. Let $SX$ be the unit tangent
bundle of $(X,g)$. The {\em Busemann function} associated to a unit
tangent vector $v_0 \in S_{x_0}X$ is defined by
$$ b(x) = b_{v_0}(x) := \lim_{s \to \infty} ( d(c(s),x) - s), $$
where $c: \RR \to X$ is the geodesic with $c(0) = x_0$, $c'(0) =
v_0$. $\Delta b = H$ implies that $b$ is an analytic function. The
level sets of $b$ are smooth hypersurfaces and are called {\em
  horospheres}. They are denoted by
$$ \H_s := b^{-1}(s). $$
These horospheres foliate $X$ and we have $x_0 \in \H_0$.
We also need the smooth unit vector field
$$ N(x) = - {\rm grad} b(x), $$
orthogonal to the horospheres $\H_s$ and satisfying $N(x_0) = v_0$.
We choose an orientation of $\H_0$ and
orientations of $\H_s$ such that the diffeomorphisms
$$ \Psi_s: \H_0 \to \H_s, \quad \Psi_s(x) := \exp_x(-s N(x)), $$
are orientation preserving. Since $HN$ is the mean curvature vector $HN$ of the horospheres is the variation field of the volume functional we have
\begin{prop} \label{prop:relvol}
  We have
  $$ (\Psi_s)^* \omega_s = e^{s H} \omega_0. $$
\end{prop}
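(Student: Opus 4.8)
The plan is to identify $\Psi_s$ with the time-$s$ flow of the vector field $Y:=-N={\rm grad}\,b$ and then read off the effect of this flow on volumes. Since the Busemann function $b$ has unit gradient, its gradient trajectories are unit-speed geodesics (indeed $\nabla_Y Y=\tfrac12{\rm grad}\,|Y|^2=0$), so writing $\rho_s$ for the flow of $Y$ one has $\rho_s(x)=\exp_x(sY(x))=\exp_x(-sN(x))$. Hence $\Psi_s$ is the restriction of $\rho_s$ to $\H_0$, the flow $\rho_s$ carries $\H_t$ to $\H_{t+s}$, and $j_s\circ\Psi_s=\rho_s\circ j_0$, where $j_t\colon\H_t\hookrightarrow X$ denotes the inclusion.

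Next I would express the induced volume form via a contraction. Let $\mu$ be the Riemannian volume form of $X$. Since $Y$ is a unit normal field along every horosphere, $\iota_Y\mu$ restricts on $\H_t$ to $\pm\,\omega_t$, and the sign is independent of $t$ once the orientations of the $\H_t$ are chosen (as above) so that the maps $\Psi_s$ are orientation preserving; thus we may normalise $\omega_t=j_t^*(\iota_Y\mu)$. The main computation is then $\mathcal{L}_Y\mu=({\rm div}\,Y)\,\mu=(\Delta b)\,\mu=H\mu$, and since $H$ is constant this yields $\frac{d}{ds}\rho_s^*\mu=\rho_s^*(\mathcal{L}_Y\mu)=H\,\rho_s^*\mu$, hence $\rho_s^*\mu=e^{sH}\mu$. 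Because $Y$ generates $\rho_s$ we have $\rho_s^*Y=Y$, so $\rho_s^*$ commutes with $\iota_Y$; combining,
$$\Psi_s^*\omega_s=\Psi_s^*j_s^*(\iota_Y\mu)=j_0^*\rho_s^*(\iota_Y\mu)=j_0^*\bigl(\iota_Y(\rho_s^*\mu)\bigr)=e^{sH}\,j_0^*(\iota_Y\mu)=e^{sH}\omega_0.$$

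The genuinely non-formal ingredient is the constancy of $H$, the common mean curvature of all horospheres in a harmonic space, which is exactly what converts the linear ODE for $\rho_s^*\mu$ into the factor $e^{sH}$; the rest is bookkeeping. The points requiring care are the identification $\Psi_s=\rho_s|_{\H_0}$ — which rests on $b$ having unit gradient, so that its gradient curves are geodesics — and the consistency of the orientations, which is precisely why they were fixed so that the $\Psi_s$ are orientation preserving. (A more computational alternative would be to describe $d\Psi_s$ by stable Jacobi fields and differentiate the logarithm of its Jacobian, which again produces $H$; the Lie-derivative argument above seems cleaner.)
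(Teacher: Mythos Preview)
Your proof is correct and rests on the same geometric fact as the paper's one-line justification---namely that the mean curvature $H$ is the infinitesimal rate of change of horosphere volume under the normal flow. The paper simply invokes that the mean curvature vector $HN$ is the variation field of the volume functional, while you spell this out explicitly via $\mathcal{L}_Y\mu=(\operatorname{div}Y)\,\mu=(\Delta b)\,\mu=H\mu$ and integrate the resulting ODE; these are two phrasings of the first variation of area, so the approaches coincide.
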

We combine the diffeomorphisms $\Psi_s: \H_0 \to \H_s$ to construct a
global diffeomorphism
$$ \Psi: \RR \times \H_0 \to X, \quad \Psi(s,x) := \Psi_s(x). $$
We have $D \Psi(\frac{\partial}{\partial s}) = -N$.

We choose an orientation on $\RR \times \H_0$ such that every oriented
base $v_1,\dots,v_n$ of $\H_0$ induces an oriented base
$\frac{\partial}{\partial s}, v_1, \dots, v_n$ on $\RR \times
\H_0$. This yields also an orientation on $X$ by requiring that $\Psi$
is orientation preserving. An immediate consequence of Proposition
\ref{prop:relvol} is

\begin{cor} \label{cor:horvolform} Let $\omega$ denote the volume form of the
harmonic space $(X,g)$. Then we have
  $$ \Psi^* \omega = e^{s H} ds \wedge \omega_0. $$
\end{cor}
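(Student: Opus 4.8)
The plan is to verify the identity pointwise by evaluating both sides on a single adapted basis of each tangent space; since $\dim(\RR\times\H_0)=n+1$, two $(n+1)$-forms coincide as soon as they take the same value on one basis at every point. On the right-hand side, $\omega_0$ is to be read as its pullback along the projection $\RR\times\H_0\to\H_0$.

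First I would fix $(s,x)\in\RR\times\H_0$ and choose a positively oriented orthonormal basis $v_1,\dots,v_n$ of $T_x\H_0$, so that, by the orientation convention on $\RR\times\H_0$, the tuple $(\partial/\partial s,v_1,\dots,v_n)$ is positively oriented. Since $D\Psi$ restricted to the horizontal subspace $T_x\H_0$ equals $D\Psi_s$ and $D\Psi(\partial/\partial s)=-N$, I obtain
$$ (\Psi^*\omega)(\partial/\partial s, v_1,\dots,v_n)=\omega\bigl(-N,\,D\Psi_s v_1,\dots,D\Psi_s v_n\bigr). $$
The vectors $D\Psi_s v_i$ are tangent to $\H_s$ and $N$ is the unit normal field to the horospheres, so the right-hand side equals $\pm\,\omega_s(D\Psi_s v_1,\dots,D\Psi_s v_n)$, the induced volume on $\H_s$. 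I would then pin the sign to $+$ by orientation bookkeeping: $\Psi_s$ is orientation-preserving, so $(D\Psi_s v_1,\dots,D\Psi_s v_n)$ is a positively oriented basis of $T_{\Psi_s(x)}\H_s$; and the orientation of $X$ is defined so that $\Psi$, hence $(-N,D\Psi_s v_1,\dots,D\Psi_s v_n)$, is positively oriented in $TX$; therefore $-N$ is the positively oriented unit normal of $\H_s$ and the contraction yields $+\omega_s$.

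Combining this with Proposition \ref{prop:relvol} gives
$$ (\Psi^*\omega)(\partial/\partial s, v_1,\dots,v_n)=(\Psi_s^*\omega_s)(v_1,\dots,v_n)=e^{sH}\,\omega_0(v_1,\dots,v_n)=e^{sH}, $$
while $\bigl(e^{sH}\,ds\wedge\omega_0\bigr)(\partial/\partial s, v_1,\dots,v_n)=e^{sH}\cdot 1\cdot\omega_0(v_1,\dots,v_n)=e^{sH}$ as well. As the two $(n+1)$-forms agree on a basis at every point $(s,x)$, they are equal.

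The only delicate step is the sign determination: contracting $\omega$ with $N$ instead of $-N$ would flip the result to $-e^{sH}$. But this reduces entirely to unwinding the orientation conventions fixed just before the corollary — that $\partial/\partial s$ is listed first on $\RR\times\H_0$, that $\Psi$ and each $\Psi_s$ are orientation-preserving, and that $D\Psi(\partial/\partial s)=-N$ — so no new geometric ingredient is needed beyond Proposition \ref{prop:relvol}.
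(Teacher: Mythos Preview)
Your proof is correct and is precisely the detailed verification that the paper omits, merely stating the corollary as ``an immediate consequence'' of Proposition~\ref{prop:relvol}. The only ingredient beyond that proposition is the orientation bookkeeping you spell out, which the paper leaves implicit in its conventions.
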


Next, we fix a unit vector $v_0 \in S_{x_0}X$, and denote the
associated Busemann function $b_{v_0}$ by $b$, for simplicty . We first
consider the following important transform:

\begin{dfn}
  Let $j : \E_0(\RR) \to \E(X)$ be defined as
  $$ (jf)(x) = e^{-\frac{H}{2} b(x)} f(b(x)). $$
  The transformation $a: \E_0(\RR) \to \E_0(X)$ is then
  defined as
  $$ a = \pi \circ j. $$
  The {\em Abel transform} $\A: \E_0'(X) \to \E_0'(\RR)$ is
  defined as the dual of $a$, i.e., we have for all $T \in
  \E_0'(X)$ and $f \in \E_0(\RR)$:
  $$ \langle \A T, f \rangle_\RR = \langle T, a f \rangle_X, $$
  where $\langle f,g \rangle_\RR = \int_\RR f(s)g(s)ds$.
\end{dfn}

The functions $\psi_\lambda(s) = \frac{1}{2} (e^{i\lambda s} + e^{-i\lambda s}) = \cos(\lambda s)$ are the radial eigenfunctions of the Laplacian $\Delta f = f''$ on the real line.

\begin{lem} \label{lem:aeigen}
  We have
  $$ a \psi_\lambda = \varphi_\lambda. $$
\end{lem}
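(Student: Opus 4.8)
The plan is to show that $a\psi_\lambda$ is a radial eigenfunction of $\Delta$ on $X$ with eigenvalue $\lambda^2 + H^2/4$ and value $1$ at $x_0$, whence the claim follows from the uniqueness in Proposition \ref{prop:efctLap}. By definition $a\psi_\lambda = \pi(j\psi_\lambda)$, where $(j\psi_\lambda)(x) = e^{-\frac{H}{2}b(x)}\cos(\lambda b(x))$. Since $\pi$ commutes with $\Delta$ (Lemma \ref{lem:piDe}) and maps into $\E_0(X)$, it suffices to check that $j\psi_\lambda$ itself satisfies $\Delta(j\psi_\lambda) + (\lambda^2 + H^2/4)(j\psi_\lambda) = 0$, and that $(\pi(j\psi_\lambda))(x_0) = 1$; the latter is immediate because $b(x_0) = 0$, so $(j\psi_\lambda)(x_0) = 1$ and averaging a function over the trivial sphere $S_0(x_0) = \{x_0\}$ returns its value there.

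The computational heart is the eigenfunction identity for $j\psi_\lambda$. Write $u = j\psi_\lambda = \phi(b)$ with $\phi(t) = e^{-\frac{H}{2}t}\cos(\lambda t)$. Using $\Delta b = H$, $|\mathrm{grad}\, b|^2 = 1$, and the chain rule $\Delta(\phi\circ b) = \phi''(b)\,|\mathrm{grad}\, b|^2 + \phi'(b)\,\Delta b = \phi''(b) + H\phi'(b)$, one computes $\phi'(t) = e^{-\frac{H}{2}t}\bigl(-\tfrac{H}{2}\cos(\lambda t) - \lambda\sin(\lambda t)\bigr)$ and $\phi''(t) = e^{-\frac{H}{2}t}\bigl((\tfrac{H^2}{4} - \lambda^2)\cos(\lambda t) + H\lambda\sin(\lambda t)\bigr)$, so that $\phi'' + H\phi' = e^{-\frac{H}{2}t}\bigl((-\tfrac{H^2}{4} - \lambda^2)\cos(\lambda t)\bigr) = -\bigl(\lambda^2 + \tfrac{H^2}{4}\bigr)\phi(t)$. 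Hence $\Delta u = -(\lambda^2 + H^2/4)\,u$ on all of $X$.

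Putting these together: $u = j\psi_\lambda$ is a (not necessarily radial) eigenfunction of $\Delta$ with eigenvalue $\mu = \lambda^2 + H^2/4$; applying $\pi$ and invoking Lemma \ref{lem:piDe} shows $\pi u \in \E_0(X)$ is a radial eigenfunction with the same eigenvalue; and $(\pi u)(x_0) = u(x_0) = 1$. By the uniqueness asserted in Proposition \ref{prop:efctLap}, $\pi u = \varphi_\lambda$, i.e. $a\psi_\lambda = \varphi_\lambda$. The main obstacle is purely the bookkeeping in the second paragraph — verifying the chain-rule formula for $\Delta$ applied to a function of the Busemann function and checking that the constant comes out to exactly $\lambda^2 + H^2/4$ — but there is no conceptual difficulty, since the analyticity of $b$ and the identities $\Delta b = H$, $|\mathrm{grad}\, b| = 1$ are already available. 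One should also note at the outset that $j\psi_\lambda$ need not have compact support, but since $\pi$ is defined on all of $\E(X)$ this causes no trouble.
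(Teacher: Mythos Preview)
Your proof is correct and follows essentially the same approach as the paper: show that $j\psi_\lambda$ is an eigenfunction of $\Delta$ with eigenvalue $\lambda^2+H^2/4$, apply $\pi$ (using Lemma~\ref{lem:piDe}), and invoke the uniqueness in Proposition~\ref{prop:efctLap} together with the normalization at $x_0$. The only cosmetic difference is that the paper writes $j\psi_\lambda=\tfrac{1}{2}\bigl(e^{-\alpha_-b}+e^{-\alpha_+b}\bigr)$ with $\alpha_\pm=\tfrac{H}{2}\pm i\lambda$ and computes $\Delta f_\alpha=-\alpha(H-\alpha)f_\alpha$ for each exponential separately, whereas you compute $\Delta\bigl(\phi\circ b\bigr)=\phi''(b)+H\phi'(b)$ directly for $\phi(t)=e^{-Ht/2}\cos(\lambda t)$; these are the same calculation.
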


\begin{proof}
  We first observe that, under the diffeomorphism $\Psi: \RR \times \H_0 \to
  X$, the Laplacian has the form
  \begin{equation} \label{eq:Laphorocoord}
  \Delta = \frac{\partial^2}{\partial^2 s} + H \frac{\partial}{\partial
  s} +  A_s,
  \end{equation}
  where $A_s$ is a differential operator with derivatives tangent
  to $\H_0$. Consequently, the functions $f_\alpha = e^{-\alpha
  b}$ are eigenfunctions of $\Delta$ with
  $$ \Delta f_\alpha = - \alpha (H - \alpha) f_\alpha. $$
  Choosing $\alpha = \frac{H}{2} \pm i \lambda$, we obtain
  $$ \Delta f_\alpha = - \left( \lambda^2 + \frac{H^2}{4} \right)
  f_\alpha, $$
  and, by uniqueness, $\pi f_\alpha$ must be a multiple of
  $\varphi_\lambda$. Since $f_\alpha(x_0) = 1$, we conclude that
  $\varphi_\lambda = \pi f_\alpha$.
  Let $\alpha_\pm = \frac{H}{2} \pm i \lambda$. Then one easily
  checks that
  $$ j \psi_\lambda = \frac{1}{2} \left(e^{-\alpha_- b} + e^{-\alpha_+
  b}\right), $$
  and, consequently,
  $$ a \psi_\lambda = \frac{1}{2} \left(\pi f_{\alpha_-} + \pi
  f_{\alpha_+}\right) = \varphi_\lambda. $$
\end{proof}

Let $\psi_{\lambda,k}(s) = \frac{d^k}{d\lambda^k} \psi_\lambda(s)
= \frac{s^k}{2} (i^k e^{i\lambda s} + (-i)^k e^{-i\lambda s})$ and
$\varphi_{\lambda,k} = \frac{d^k}{d\lambda^k} \varphi_\lambda$.
Lemma \ref{lem:aeigen} implies that we also have
\begin{equation}\label{eq:apsiphi}
a \psi_{\lambda,k} = \varphi_{\lambda,k},
\end{equation}
for all $k \ge 1$.


\begin{prop}
  The Abel transform of a function $f \in \D_0(X) \subset
  \E_0'(X)$ is $\A f\in\D_0(\RR)\subset\E_0'(\RR)$ given by
  \begin{equation} \label{eq:Af}
  (\A f)(s) = e^{-\frac{H}{2} s} \int_{\H_s} f(z) \omega_s(z) =
  e^{\frac{H}{2} s} \int_{\H_0} f(\Psi_s(z)) \omega_0(z).
  \end{equation}
\end{prop}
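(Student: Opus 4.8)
The plan is to unwind the definition of $\A$ as the dual of $a=\pi\circ j$ and then pass to the horospherical coordinates $\Psi:\RR\times\H_0\to X$ introduced above. I would start by fixing $f\in\D_0(X)\subset\E_0'(X)$ and a test function $g\in\E_0(\RR)$. By the definition of the Abel transform,
$$\langle\A f,g\rangle_\RR=\langle f,ag\rangle_X=\langle f,\pi(jg)\rangle_X .$$
Since $f$ is radial about $x_0$ we have $\pi f=f$, and $\pi$ is self-adjoint by Lemma \ref{prop:avprop}. Because $f$ has compact support, the pairing $\langle f,\pi(jg)\rangle_X$ is an absolutely convergent integral even though $jg$ itself need not have compact support (only the values of $jg$ on a fixed ball around $x_0$ enter $\pi(jg)$ over $\supp f$), and therefore
$$\langle\A f,g\rangle_\RR=\langle\pi f,jg\rangle_X=\langle f,jg\rangle_X=\int_X f(x)\,e^{-\frac H2 b(x)}g(b(x))\,dx .$$

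Next I would evaluate this last integral in the coordinates $\Psi(s,x)=\Psi_s(x)=\exp_x(-sN(x))$. By construction the curves $s\mapsto\Psi_s(x)$ are the unit-speed geodesics in the direction of ${\rm grad}\,b=-N$, along which $b$ grows at unit rate, so $b(\Psi(s,x))=s$ for $x\in\H_0$. Combining this with Corollary \ref{cor:horvolform}, which gives $\Psi^*\omega=e^{sH}\,ds\wedge\omega_0$, the integral becomes
$$\int_\RR\int_{\H_0}f(\Psi_s(x))\,e^{-\frac H2 s}g(s)\,e^{sH}\,\omega_0(x)\,ds
=\int_\RR g(s)\,e^{\frac H2 s}\Big(\int_{\H_0}f(\Psi_s(x))\,\omega_0(x)\Big)\,ds ,$$
so that $(\A f)(s)=e^{\frac H2 s}\int_{\H_0}f(\Psi_s(x))\,\omega_0(x)$, which is the second expression in \eqref{eq:Af}. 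The first expression then follows from Proposition \ref{prop:relvol}: the change of variables along $\Psi_s:\H_0\to\H_s$ rewrites $\int_{\H_0}f(\Psi_s(x))\,\omega_0(x)$ as $e^{-sH}\int_{\H_s}f(z)\,\omega_s(z)$.

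It remains to verify that $\A f$ genuinely lies in $\D_0(\RR)$. If $\supp f\subset B_R(x_0)$, then, since $|b(x)|\le d(x_0,x)$, the horosphere $\H_s$ is disjoint from $\supp f$ whenever $|s|>R$, so $\A f$ has compact support; smoothness in $s$ is immediate because $\Psi$ is a diffeomorphism and the integrand on $\RR\times\H_0$ is smooth with compact support. The one point I expect to require a little care is evenness: a priori $\A f$ is only tested against even functions $g\in\E_0(\RR)$, so the computation above determines it only as an even function, and one still has to check that the displayed right-hand side is itself even in $s$. This is exactly where the weight $e^{-\frac H2 s}$ is essential — it compensates the asymmetry of the horospherical volume element — and can be checked directly (the mechanism is already visible for real hyperbolic space). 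Apart from this evenness bookkeeping and the care needed in handling the non-compactly supported function $jg$, the argument is a straightforward unwinding of definitions combined with the horospherical change of variables.
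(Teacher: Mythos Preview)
Your approach is essentially the paper's, run in the opposite direction: you start from the duality $\langle\A f,g\rangle=\langle f,ag\rangle$ and unwind to the horospherical integral, whereas the paper defines the horospherical integral first and then verifies the duality. The use of self-adjointness of $\pi$ together with $\pi f=f$ to pass from $\langle f,\pi(jg)\rangle$ to $\langle f,jg\rangle$, and the change of variables via Corollary \ref{cor:horvolform}, are exactly the same ingredients as in the paper; the compact-support argument via $|b(x)|\le d(x_0,x)$ is also the same.

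The one genuine gap is the evenness of the displayed expression. You correctly observe that testing against even $g\in\E_0(\RR)$ only pins down the even part of the right-hand side, and you flag that evenness of the function itself still has to be shown. But your suggestion that this ``can be checked directly'' by analogy with real hyperbolic space does not go through for a general non-compact harmonic space: there is no isometry available that swaps the horospheres $\H_s$ and $\H_{-s}$ while fixing $x_0$, and the Busemann functions $b_{v_0}$ and $b_{-v_0}$ are not negatives of each other off the axis. The weight $e^{-Hs/2}$ alone does not force the symmetry. The paper handles this point by a Fourier argument: using Lemma \ref{lem:aeigen} ($a\psi_\lambda=\varphi_\lambda$), one gets $\int_\RR g(s)e^{\pm i\lambda s}\,ds=\langle f,\varphi_\lambda\rangle$ for all $\lambda\in\CC$, so the Fourier transform of $g(s)-g(-s)$ vanishes identically, forcing $g$ to be even. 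You should replace the ``direct check'' remark with this argument (or an equivalent one), since without it the proof is incomplete.
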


\begin{proof}
  Let $f \in \D_0(X)$ and
  $$ g(s) = e^{-\frac{H}{2} s} \int_{\H_s} f(z) \omega_s(z). $$
  Since $f$ has compact support, there is $T > 0$ such that $\H_s
  \cap \supp f = \emptyset$ for all $\vert s \vert \ge T$, i.e.,
  $g$ has also compact support. Moreover, by Proposition
  \ref{prop:relvol}, we obtain
  \begin{eqnarray*}
  g(s) &=& e^{-\frac{H}{2} s} \int_{\Psi_s(\H_0)} f(z)
  \omega_s(z) = e^{-\frac{H}{2} s} \int_{\H_0} f(\Psi_s(z))
  (\Psi_s^* \omega_s)(z)\\
  &=& e^{\frac{H}{2} s} \int_{\H_0} f(\Psi_s(z)) \omega_0(z).
  \end{eqnarray*}
  Next, we show $\langle g, h \rangle = \langle f, a h \rangle $
  for all $h \in \E_0(\RR)$:
  \begin{eqnarray*}
  \langle g,h \rangle &=& \int_{-\infty}^\infty g(s) h(s) ds =
  \int_{-\infty}^\infty e^{- \frac{H}{2} s} h(s) \int_{\H_s} f(z)
  \omega_s(z) ds \\
  &=& \int_{-\infty}^\infty \int_{\H_s} f(z) e^{-\frac{H}{2} b(z)}
  h(b(z)) \omega_s(z) ds = \int_X f(z) e^{-\frac{H}{2} b(z)}
  h(b(z)) dz \\
  &=& \langle f, j h \rangle = \langle \pi f, j h \rangle =
  \langle f, a h \rangle.
  \end{eqnarray*}
  Finally, we show that $g$ is an even function: Using $\langle g,h
  \rangle = \langle f,ah \rangle$ and Lemma \ref{lem:aeigen}, we derive
  $$ \int_{-\infty}^\infty g(s) e^{\pm i \lambda s} ds = \langle
  f, \varphi_\lambda \rangle, $$
  which implies that
  $$
  \int_{-\infty}^\infty e^{i \lambda s} (g(s) - g(-s)) ds = 0,
  $$
  for all $\lambda \in \CC$. This yields $g(s) = g(-s)$, i.e., $g$
  is an even function. This finishes the proof of the proposition.
\end{proof}

\begin{lem}\label{dualALaplace} For $f\in\E'_0(X)$, we have
$$\A(\Delta f) = \left(\frac{d^2}{ds^2}-\frac{H^2}{4}\right) \A f\ .$$
\end{lem}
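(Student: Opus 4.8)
The plan is to move the claimed identity, through the duality that defines $\A$, to the intertwining relation
$$\Delta \circ a \;=\; a \circ \left(\frac{d^2}{ds^2} - \frac{H^2}{4}\right) \qquad \text{on } \E_0(\RR),$$
which I would prove by the same horospherical computation that underlies Lemma \ref{lem:aeigen}. Indeed, in the horospherical coordinates $\Psi:\RR\times\H_0\to X$ the Laplacian has the form \eqref{eq:Laphorocoord}, namely $\Delta = \frac{\partial^2}{\partial s^2} + H\frac{\partial}{\partial s} + A_s$, where $A_s$ involves only derivatives tangent to $\H_0$ and therefore annihilates every function depending on $s$ alone. Since $b\circ\Psi(s,\cdot)\equiv s$, the function $jh$ equals $e^{-Hs/2}h(s)$ in these coordinates, so $\Delta(jh) = \left(\frac{\partial^2}{\partial s^2} + H\frac{\partial}{\partial s}\right)\!\left(e^{-Hs/2}h(s)\right)$; performing this one-variable computation (the same manipulation as with $\alpha=\tfrac{H}{2}\pm i\lambda$ in the proof of Lemma \ref{lem:aeigen}, but now for an arbitrary $h$) gives $\Delta(jh) = e^{-Hs/2}\left(\left(\tfrac{d^2}{ds^2}-\tfrac{H^2}{4}\right)h\right)\!(s) = j\!\left(\left(\tfrac{d^2}{ds^2}-\tfrac{H^2}{4}\right)h\right)$. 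Applying $\pi$, using $\pi\circ\Delta=\Delta\circ\pi$ (Lemma \ref{lem:piDe}) and $a=\pi\circ j$, then yields the intertwining relation. (Alternatively one could verify it on the functions $\psi_{\lambda,k}$ by means of $a\psi_{\lambda,k}=\varphi_{\lambda,k}$ and the eigenvalue equations, but that route requires a density argument which the direct computation sidesteps.)

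Granting the intertwining relation, the lemma is a short computation with adjoints. First, $\Delta f\in\E_0'(X)$ for $f\in\E_0'(X)$: the Laplacian is local, hence preserves compact support, and it preserves radiality since $\pi\circ\Delta=\Delta\circ\pi$; so $\A(\Delta f)$ is well defined. For an arbitrary $h\in\E_0(\RR)$ we then have
$$\langle \A(\Delta f), h\rangle_\RR = \langle \Delta f,\, ah\rangle_X = \langle f,\, \Delta(ah)\rangle_X = \left\langle f,\; a\!\left(\left(\tfrac{d^2}{ds^2}-\tfrac{H^2}{4}\right)h\right)\right\rangle_X = \left\langle \A f,\; \left(\tfrac{d^2}{ds^2}-\tfrac{H^2}{4}\right)h\right\rangle_\RR,$$
where we used successively the definition of $\A$, the fact that $\Delta$ acts on distributions as the transpose of $\Delta$ acting on $\E(X)$, the intertwining relation, and again the definition of $\A$. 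Since $\frac{d^2}{ds^2}-\frac{H^2}{4}$ is formally self-adjoint on $\RR$ and preserves $\E_0(\RR)$ (the second derivative of an even function is even), the last pairing equals $\left\langle\left(\tfrac{d^2}{ds^2}-\tfrac{H^2}{4}\right)\A f,\; h\right\rangle_\RR$. As $h\in\E_0(\RR)$ was arbitrary, $\A(\Delta f) = \left(\tfrac{d^2}{ds^2}-\tfrac{H^2}{4}\right)\A f$ in $\E_0'(\RR)$.

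I do not anticipate a genuine obstacle: all the content is in the horospherical form \eqref{eq:Laphorocoord} of $\Delta$, which is already at hand, and the remainder is bookkeeping with the two adjoint relations. The only mildly delicate points are checking that $\Delta$ maps $\E_0'(X)$ into itself and that $\frac{d^2}{ds^2}-\frac{H^2}{4}$ maps $\E_0(\RR)$ into itself, and both are immediate.
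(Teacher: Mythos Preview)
Your proof is correct and is essentially identical to the paper's own proof: both compute $\Delta(jh) = j\!\left(\left(\tfrac{d^2}{ds^2}-\tfrac{H^2}{4}\right)h\right)$ from the horospherical form \eqref{eq:Laphorocoord} of the Laplacian, invoke $\pi\circ\Delta=\Delta\circ\pi$ to pass to $a$, and then run the same chain of dual pairings. The only cosmetic difference is that you isolate the intertwining relation $\Delta\circ a = a\circ\left(\tfrac{d^2}{ds^2}-\tfrac{H^2}{4}\right)$ as a separate statement, whereas the paper absorbs it directly into the string of equalities.
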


\begin{proof} Let $u\in\E_0(\RR)$. Since $ju\in\E(X)$
  is constant on the horospheres, from \eqref{eq:Laphorocoord}, we
  compute
  $$
  [\Delta j u](x) = \left.\left(\frac{\partial^2}{\partial
        s^2}+H\frac{\partial}{\partial s}\right) e^{-s H/2}
    u(s)\right|_{s=b(x)} =\left[j\left(\frac{d^2}{d
        s^2}-\frac{H^2}{4}\right)u\right](x),
  $$
  hence
  \begin{eqnarray*}
    \langle\A(\Delta f), u\rangle&=&\langle\Delta f, au\rangle = \langle f,\Delta \pi j u\rangle=\langle f,\pi\Delta j u\rangle=\langle f,\pi j \left(\frac{d^2}{d s^2}-\frac{H^2}{4}\right) u\rangle\\
    &=&\langle f, a \left(\frac{d^2}{d s^2}-\frac{H^2}{4}\right) u\rangle=\langle \A f,\left(\frac{d^2}{d s^2}-\frac{H^2}{4}\right) u\rangle=\langle \left(\frac{d^2}{d s^2}-\frac{H^2}{4}\right)\A f, u\rangle.
  \end{eqnarray*}
\end{proof}

The next result will be used in the proof of Theorem \ref{thm:abij} below, namely to establish the local injectivity of $a: \E_0(\RR) \to \E_0(X)$.

\begin{lem}\label{lem:horowavesol}
Let $g\in\D(\RR)$. Then the Klein-Gordon equation
\begin{eqnarray}
\label{eq:hwe}\frac{\partial^2}{\partial t^2}v(t,s)&=&\frac{\partial^2}{\partial s^2}v(t,s)-\frac{H^2}{4}v(t,s)\\
\label{eq:hewic}v(0,s)&=&g(s)\text{ and }\frac{\partial}{\partial t}v(0,s)=0
\end{eqnarray}
has a solution of the form
\begin{equation} \label{eq:horosphwavesolform}
  v(t,s)=\frac{g(s-t)+g(s+t)}{2}+\int_{s-t}^{s+t}
  W(t,s-s')g(s')ds',
\end{equation}
with $W\in\E(\RR^2)$. This solution is unique in the sense that if
$\tilde v$ is another solution of \eqref{eq:hwe} and \eqref{eq:hewic} and
so that for all $t$ the function $\tilde v_t\colon s\mapsto\tilde v(t,s)$
has compact support, then $\tilde v=v$.
\end{lem}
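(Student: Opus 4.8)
The plan is to prove existence by explicitly constructing the kernel $W$ and then prove uniqueness by a standard energy argument for the Klein-Gordon equation.

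\textbf{Existence.} First I would substitute the ansatz \eqref{eq:horosphwavesolform} into \eqref{eq:hwe} and \eqref{eq:hewic}. Writing $v(t,s) = \tfrac12(g(s-t)+g(s+t)) + \int_{s-t}^{s+t} W(t,s-s')g(s')\,ds'$, I note that the leading d'Alembert term $\tfrac12(g(s-t)+g(s+t))$ already solves the plain wave equation with the correct initial data, and the integral term vanishes to first order at $t=0$ so the initial conditions \eqref{eq:hewic} are automatically satisfied for any smooth $W$ with, say, $W(0,\cdot)$ arbitrary. The task is therefore to choose $W$ so that the integral term ``absorbs'' the extra potential term $-\tfrac{H^2}{4}v$. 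Changing variables and differentiating under the integral sign, and using the standard facts about differentiating integrals whose limits depend on the parameters (which produces boundary contributions along the characteristics $s' = s\pm t$), one is led to a characteristic (Goursat-type) problem for $W$: the operator $\partial_t^2 - \partial_s^2$ applied to the integral term must equal $-\tfrac{H^2}{4}$ times the whole expression $v$. Carrying this out, the boundary terms along $s'=s\pm t$ force $W$ to satisfy an ODE along the characteristics determined by the initial data, and the interior terms give that $W$ itself must satisfy the same Klein-Gordon equation $(\partial_t^2 - \partial_s^2 + \tfrac{H^2}{4})W = 0$ with data prescribed on the characteristic lines through the origin. Since $H$ is a constant, this characteristic problem has an explicit real-analytic solution; in fact $W$ can be written in closed form using a Bessel function $J_1$ of the argument $\tfrac{H}{2}\sqrt{t^2 - (s-s')^2}$, exactly as in the classical Riemann function for the Klein-Gordon / telegraph equation. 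One checks directly that this $W$ lies in $\E(\RR^2)$ (the Bessel function composed with the smooth function $t^2 - \sigma^2$ extends smoothly across the light cone because $J_1(z)/z$ is entire).

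\textbf{Uniqueness.} For uniqueness, suppose $\tilde v$ solves \eqref{eq:hwe}, \eqref{eq:hewic} and each slice $\tilde v_t$ has compact support. Set $w = \tilde v - v$; then $w$ solves the homogeneous Klein-Gordon equation with zero initial data, and, since $v$ (by its explicit form, $g$ having compact support) also has compactly supported slices, each $w_t$ has compact support. Define the energy
$$
E(t) = \int_\RR \left( (\partial_t w)^2 + (\partial_s w)^2 + \tfrac{H^2}{4} w^2 \right) ds.
$$
This is finite for each $t$ by the compact support, and differentiating under the integral and integrating by parts (the boundary terms vanish by compact support) gives
$$
E'(t) = 2 \int_\RR \partial_t w \left( \partial_t^2 w - \partial_s^2 w + \tfrac{H^2}{4} w \right) ds = 0,
$$
so $E$ is constant; since $E(0)=0$ we get $E\equiv 0$, hence $\partial_t w \equiv 0$ and $w$ is constant in $t$, so $w \equiv w(0,\cdot) = 0$. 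This shows $\tilde v = v$.

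\textbf{Main obstacle.} The routine but delicate part is the existence computation: verifying that the characteristic problem for $W$ is correctly set up and solvable within $\E(\RR^2)$, in particular checking the smoothness of the Bessel-function kernel across the characteristic cone and keeping careful track of all the boundary terms generated when $(\partial_t^2 - \partial_s^2)$ hits the integral $\int_{s-t}^{s+t} W(t,s-s') g(s')\,ds'$. The uniqueness half is comparatively soft, the only point needing care being the justification that the compact-support hypothesis on the slices $\tilde v_t$ legitimizes the integration by parts and the differentiation under the integral sign in the energy estimate.
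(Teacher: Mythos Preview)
Your proposal is correct and follows essentially the same route as the paper: the paper also constructs $W$ explicitly (writing out the power series $W(t,s)=t\sum_{k\ge 0}\bigl(-H^2/16\bigr)^{k+1}\frac{(t^2-s^2)^k}{k!(k+1)!}$, which is precisely your Bessel-$J_1$ Riemann function), records that $W$ satisfies $W_{tt}=W_{ss}-\tfrac{H^2}{4}W$ together with the characteristic boundary condition $W(t,t)=-\tfrac{H^2}{16}t$, and then proves uniqueness by the identical energy argument with $E(t)=\int_\RR (w_s^2+w_t^2+\tfrac{H^2}{4}w^2)\,ds$.
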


\begin{proof}
The function $W$ is explicitly given by
\begin{equation}\label{eq:kernel-0}
W(t,s) = t\sum_{k=0}^\infty\left(\frac{-H^2}{16}\right)^{k+1}\frac{(t^2-s^2)^k}{k!(k+1)!},
\end{equation}
but we will only need that $W$ is smooth. A straightforward computation shows that \eqref{eq:horosphwavesolform} actually solves \eqref{eq:hwe} and \eqref{eq:hewic}. The function $W$ is even in the second argument $s$, and solves the equations
$$W(t,t)=-\frac{H^2}{16}t,\  W_{tt}=W_{ss}-\frac{H^2}{4}W. $$
Uniqueness of the solution follows from conservation of energy as, for instance, in [Taylor, pp145]. To see this, assume that $\tilde v$ is another solution of \eqref{eq:hwe} and \eqref{eq:hewic}. Then the difference $\omega=v-\tilde v$ solves \eqref{eq:hwe} and \eqref{eq:hewic} with $g$ replaced by $0$. We now look at the energy
$$E_\omega(t):=\int_{-\infty}^\infty\omega_s(t,s)^2+\omega_t(t,s)^2+\frac{H^2}{4}\omega(t,s)^2ds,$$
and compute, integrating by parts,
\begin{eqnarray*}
\frac{d}{dt}E_\omega(t)&=&2\int_{-\infty}^\infty\omega_s\omega_{st}+\omega_{tt}\omega_t+\frac{H^2}{4}\omega_t\omega ds=2\int_{-\infty}^\infty-\omega_{ss}\omega_t+\omega_{tt}\omega_t+\frac{H^2}{4}\omega\omega_t ds\\
&=&2\int_{-\infty}^\infty\left(-\omega_{ss}+\omega_{tt}+\frac{H^2}{4}\omega\right)\omega_t ds=0,
\end{eqnarray*}
because $\omega$ satisfies \eqref{eq:hwe}. Since $E_\omega(0)=0$ we have $E_\omega(t)=0$ for all $t$ which forces $\omega=0$.
\end{proof}

\begin{thm}\label{thm:abij}
  The maps $a: \E_0(\RR) \to \E_0(X)$ and $\A = a'$ are topological
  isomorphisms.
\end{thm}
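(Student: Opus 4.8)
The plan is to show first that $a\colon\E_0(\RR)\to\E_0(X)$ is a continuous linear bijection; the open mapping theorem for Fr\'echet spaces then makes it a topological isomorphism, and transposing makes $\A=a'$ a topological isomorphism between the strong duals $\E_0'(X)$ and $\E_0'(\RR)$. Continuity of $a=\pi\circ j$ is immediate, and continuity of $\A=a'$ follows. So everything reduces to bijectivity of $a$.

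To handle injectivity — and, more efficiently, to produce a continuous one-sided inverse — I would bring in the wave equation. For $f\in\E_0(\RR)$ let $v=v^f$ be the Klein--Gordon solution of Lemma \ref{lem:horowavesol} with data $(f,0)$ (this still makes sense for non-compactly supported $f$, by localization). Using \eqref{eq:Laphorocoord} — equivalently the identity $\Delta(ju)=j(\tfrac{d^2}{ds^2}-\tfrac{H^2}{4})u$ from the proof of Lemma \ref{dualALaplace} — together with the Klein--Gordon equation, one checks that $w(t,x):=(jv_t)(x)=e^{-\frac H2 b(x)}v(t,b(x))$ solves the ordinary wave equation $\partial_t^2w=\Delta w$ on $\RR\times X$ with data $(jf,0)$; since $\pi$ commutes with $\Delta$ (Lemma \ref{lem:piDe}), the function $U(t,\cdot):=\pi(jv_t)=a(v_t)$ solves $\partial_t^2U=\Delta U$ with data $(af,0)$. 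Evaluating at $x_0$, where $b(x_0)=0$ and $(\pi h)(x_0)=h(x_0)$, and inserting the D'Alembert-type formula \eqref{eq:horosphwavesolform} (with $f$ and $W(t,\cdot)$ both even) gives, for $t\ge0$,
$$U(t,x_0)=v(t,0)=f(t)+2\int_0^t W(t,s')f(s')\,ds'=:(Tf)(t),$$
where $T=\mathrm{Id}+K$ with $K$ a Volterra operator with smooth kernel, hence a topological automorphism of $\E_0(\RR)$. For arbitrary $g\in\E_0(X)$ let $U^g$ be the solution of $\partial_t^2U^g=\Delta U^g$ with data $(g,0)$ (it exists, depends continuously on $g$, and is radial since $\pi U^g$ solves the same problem, by Lemma \ref{lem:piDe}); then $t\mapsto U^g(t,x_0)$ is an even element of $\E_0(\RR)$ and $R(g):=T^{-1}\big(t\mapsto U^g(t,x_0)\big)$ defines a continuous linear map $R\colon\E_0(X)\to\E_0(\RR)$, which by the computation above (and uniqueness of the wave equation, $U^{af}=U$) satisfies $R\circ a=\mathrm{Id}$. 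So $a$ is injective with closed range. (This contains the announced local injectivity: if $af$ vanishes on $B_\rho(x_0)$, finite propagation speed gives $(Tf)(t)=U^{af}(t,x_0)=0$ for $|t|<\rho$, whence $f\equiv0$ on $(-\rho,\rho)$.)

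It remains to prove $a$ surjective, which is equivalent to injectivity of $R$, i.e. to the statement that \emph{a radial solution $U$ of $\partial_t^2U=\Delta U$ on $\RR\times X$ with $U(t,x_0)=0$ for all $t$ vanishes identically} (then $R(g)=0$ forces $U^g\equiv0$, hence $g=0$; and $a\circ R=\mathrm{Id}$ follows because $a(Rg)$ and $g$ yield wave solutions with the same trace at $x_0$, so they coincide by the same uniqueness). This uniqueness is where I expect the main difficulty. Writing $U(t,x)=\bar U(t,r(x))$, by \eqref{eq:Lappolcoord} the profile solves $\partial_t^2\bar U=\partial_r^2\bar U+\tfrac{\theta'(r)}{\theta(r)}\partial_r\bar U$ with $\bar U(t,0)\equiv0$ and $\partial_r\bar U(t,0)\equiv0$ (smoothness of the radial function). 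From the vanishing trace and the equation one first shows, inductively, that $\bar U$ is flat along $\{r=0\}$. Since $X$ is Einstein it is real-analytic, so $\theta$ is real-analytic on $(0,\infty)$ and $\{r=r_0\}$ is non-characteristic for the equation; Holmgren's uniqueness theorem, applied at each $r_0>0$ and propagated in $r$ (uniformly in $t$, the coefficients being $t$-independent), forces $\bar U\equiv0$, the singular point $r=0$ being absorbed by the flatness just established (comparison with the Euler--Poisson--Darboux equation, for which a flat solution with vanishing trace is zero). The point to be careful about is precisely this: the naive energy method fails because $\{r=0\}$ is a \emph{timelike} hypersurface for the wave operator, so analyticity is genuinely needed, and the singularity of $\theta'/\theta$ at $r=0$ has to be dealt with separately.

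Once $a$ is a continuous linear bijection of Fr\'echet spaces, the open mapping theorem makes it a topological isomorphism, and transposing shows that $\A=a'\colon\E_0'(X)\to\E_0'(\RR)$ is one as well.
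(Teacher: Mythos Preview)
Your injectivity argument is essentially the paper's, run in the mirror direction: you push the Klein--Gordon solution forward through $j$ and $\pi$ to a wave solution on $X$, while the paper pushes a wave solution on $X$ forward through $\A$ to a Klein--Gordon solution on $\RR$; in both cases the D'Alembert formula \eqref{eq:horosphwavesolform} yields the same Volterra equation, so this part is fine and matches the paper.

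The surjectivity argument, however, diverges from the paper and has a genuine gap. You correctly reduce surjectivity of $a$ to the uniqueness statement ``a radial solution of $\partial_t^2U=\Delta U$ with $U(t,x_0)\equiv0$ vanishes''. The inductive flatness of $\bar U$ at $r=0$ is fine, and Holmgren would indeed propagate vanishing from any strip $\{0<r<\epsilon\}$ out to all $r>0$, since $\theta'/\theta$ is analytic on $(0,\infty)$ and the surfaces $\{r=r_0\}$ are non-characteristic. But the step you flag yourself --- getting from \emph{flatness} at $r=0$ to \emph{vanishing} near $r=0$ --- is not supplied. Holmgren does not apply at $r=0$ (the coefficient is singular and the ``hypersurface'' degenerates), and flatness of a merely smooth solution does not by itself force local vanishing; that is exactly a unique-continuation statement across the singular point, and the appeal to ``comparison with the Euler--Poisson--Darboux equation'' is a restatement of the problem rather than a proof. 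In the ambient picture on $\RR\times X$ you are asking for unique continuation of a smooth wave solution from a one-dimensional timelike line $\{(t,x_0)\}$, which is generally false even with analytic coefficients; radiality must be exploited in a way your sketch does not make precise.

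The paper avoids this difficulty entirely: surjectivity is proved by an \emph{eigenfunction expansion}. Given $f\in\E_0(X)$ and $R>0$, one expands a cut-off of $f$ in Dirichlet eigenfunctions of $\Delta$ on $B_{R+1}(x_0)$, uses $\pi\varphi_k=\varphi_k(x_0)\varphi_{\lambda_k}$, and then shows via Sobolev embedding and Weyl's law that the corresponding series $\sum a_k\varphi_k(x_0)\cos(\lambda_k s)$ converges in $\E_0([-R,R])$ to a preimage $g_R$ under $a^R$; local injectivity makes the $g_R$ patch to a global $g\in\E_0(\RR)$ with $ag=f$. This is more elementary (no Holmgren, no unique continuation near a singular point) and does not use analyticity of $X$ beyond what is already built into the set-up.
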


\begin{proof}
We first show local injectivity of $a$. For all $R\geq 0$ the map $a$ induces a well defined map
$$a^R\colon\E_0([-R,R])\to\E_0(B_R(x_0))\ ,$$
i.e. for $x\in X$ the value of $au(x)$ depends only on the restriction of $u$ to $[-r(x),r(x)]$. Local injectivity now is the fact that for all $R\geq 0$, the maps $a^R$ are injective. Thus for $u\in\E_0(\RR)$ we have
$$ a u|_{B_R(x_0)} = 0 \quad\Longrightarrow\quad u|_{[-R,R]} = 0\ .$$
The proof is based on the fact that $\A\colon\E_0'(X)\to\E_0'(\RR)$
transforms the fundamental solution of the radial wave equation to the
fundamental solution of the Klein-Gordon equation. Since we need $\A$
on compactly supported distributions, finite propagation speed of the
solution of the wave equation is essential here.

For all $\epsilon>0$ we choose a function $q_\epsilon\in\E_0(X)$ so that
$$q_\epsilon(x)=0\text{ if }r(x)>\epsilon\text{ and }\int_Xq_\epsilon = 1\ .$$
Let $w\in\E_0(\RR\times X)$ be the solution of the wave equation starting with $q_\epsilon$, i.e.
\begin{eqnarray*}
\frac{\partial^2}{\partial t^2}w(t,r(x))&=&\Delta_x w_t(r(x))=\frac{\partial^2}{\partial s^2}w(t,r(x))+\frac{\theta'}{\theta}(r(x))\frac{\partial}{\partial s}w(t,r(x)),\\
w(0,s)&=&q_\epsilon(s),\\
\frac{\partial}{\partial s}w(0,s)&=&0,
\end{eqnarray*}
where $w_t(s) := w(t,s)$. By the finite propagation speed of the wave equation, the support of $w_t$ is compact (in fact contained in $B_{\epsilon+t}(0)$).

Let $v\in\E(\RR^2)$ be so that $v_t:=\A w_t$, i.e.
$$v(t,s)=e^{-Hs/2}\int_{\H_s}w(t,r(x))d\omega_s.$$
Then by Lemma \ref{dualALaplace}
$$\frac{\partial^2}{\partial t^2}v_t=\frac{\partial^2}{\partial t^2}\A w_t=\A\frac{\partial^2}{\partial t^2}w_t=\A\Delta w_t=\left(\frac{\partial^2}{\partial s^2}-\frac{H^2}{4}\right)v_t.$$

It follows that $v$ solves the Klein-Gordon equation \eqref{eq:hwe} with initial conditions (in place of \eqref{eq:hewic})
$$v_0=g_\epsilon:=\A q_\epsilon\text{ and }\frac{\partial}{\partial t}v(0,s)=0.$$
If $|s|>\epsilon$ then $v(0,s)=g_\epsilon(s)=0$. Also
$$\int_{-\infty}^\infty e^{Hs/2} g_\epsilon(s)ds=\int_{-\infty}^\infty\int_{\H_s}q_\epsilon(r(x))d\omega_s ds=\int_Xq_\epsilon=1.$$

Now, to prove local injectivity of $a$, let $R\geq 0$ and $u\in\E_0(\RR)$ with $a u|_{B_R(x_0)} = 0$. For all $t\in[-(R-\epsilon),R-\epsilon]$ we then have from \eqref{eq:horosphwavesolform}
\begin{eqnarray*}
0&=&\langle w_t, a u\rangle=\langle\A w_t, u\rangle=\langle v_t, u\rangle=\\
&=&\int_{-\infty}^\infty\frac{g_\epsilon(s-t)+g_\epsilon(s+t)}{2}u(s)ds+\int_{-\infty}^\infty\int_{s-t}^{s+t} W(t,s-s')g_\epsilon(s')ds'u(s)ds.
\end{eqnarray*}
Since $W,u$ and $g_\epsilon$ are smooth, we can take the limit $\epsilon\to 0$ here to get the identity
$$\frac{u(t)+u(-t)}{2}=-\int_{-t}^{t} W(t,s)u(s)ds.$$
Since $u$ and $W$ are even (in $s$), we can write this as a fixed point equation,
$$u(t)=-2\int_0^t W(t,s)u(s)ds.$$
Since $\pi f(x_0)=f(x_0)$ for all $f\in\E(X)$ we have $u(0)=0$. Let
$$M:=\max_{t\in[0,R],s\in[0,t]}\left|W(t,s)\right|.$$
Hence, if $u|_{[0,R]}\neq 0$ there is some $T\in[0,R]$ with the
following properties: (i) $u(T) \neq 0$ and (ii) $T<\frac{1}{4M}$ or
$u|_{[0,T-\frac{1}{4M}]}=0$. Now for all $t\in[0,T]$ we estimate
$$|u(t)|\leq 2\int_0^{t}|W(t,s)|\,|u(s)|ds\leq 2\int_{\max\{0,t-\frac{1}{4M}\}}^t |W(t,s)|\,|u(s)|ds\leq\frac{1}{2}\sup_{t'\in[0,T]}|u(t')|,$$
contradicting our assumption about $T$ and $u(T)\neq 0$.

Now we prove surjectivity of $a$: Let $f\in\E_0(X)$ and $R>0$ be
fixed. Choose a function $\phi\in \D_0(X)$ with $\phi = 1$ on
$B_R(x_0)$, $0 \le \phi \le 1$ on $X$, and $\phi = 0$ on $X \backslash
B_{R+1}(x_0)$.
We will first show that $\phi f|_{B_R(x_0)}$ is in $a(\E_0(\RR))|_{B_R(x_0)}$.

Let $\varphi_k$ be an orthonormal basis of Dirichlet eigenfunctions of
the Laplacian on $B_{R+1}(x_0)\subset X$ with corresponding
eigenvalues $0 \le \mu_k \nearrow \infty$. We have $\pi \varphi_k = \varphi_k(x_0)
\varphi_{\lambda_k}$ with $\lambda_k \in \CC$ such that $\mu_k = \lambda_k^2 + \frac{H^2}{4}$. Let
$$
\phi f = \sum_{k=0}^\infty a_k \varphi_k
$$
be the Fourier expansion of $\phi f$. Therefore,
$$
\phi f = \pi(\phi f) = \sum_{k=0}^\infty a_k \varphi_k(x_0)
\varphi_{\lambda_k}.
$$
Our first goal is to show that the series $(\sum_{k=0}^N a_k
\varphi_k(x_0)\cos(\lambda_k s))_{N \in \NN}$ converges uniformly with all its derivatives to a smooth function $g_R \in \E_0([-R,R])$. Then we have $a^R(g_R) = \phi f
\vert_{B_R(x_0)}$, by the continuity of $a$. We prove this by
showing that
\begin{equation} \label{eq:goal}
\sum_{k=0}^\infty |a_k|\; |\varphi_k(x_0)|\; |\lambda_k|^m < \infty \quad
\text{for all $m \in \NN$}.
\end{equation}

By the Sobolev imbedding theorem, there is a constant $C_0$ such that
for all $u \in C(B_{R+1}(x_0))$ we have
$$\|u\|_\infty \leq C_0 \left(\|u\|_2 + \| \Delta^{n+1} u\|_2\right)\ ,$$
where $\Vert \cdot \Vert_\infty$ denotes the supremum norm and $\Vert \cdot \Vert_2$ denotes the $L^2$-norm, and $n+1$ is the dimension of $X$. This implies that
$$
\|\varphi_k(x_0)| \le \|\varphi_k\|_\infty \le C_0(1+ \mu_k^{n+1}).
$$
By Weyl's law, the eigenvalues $\mu_k$ grow with an exponent $2/(n+1)$,
which implies that there is an $k_0 \in \NN$ and a $C_1 > 0$ such that,
for all $k \ge k_0$:
\begin{equation} \label{eq:hilf1}
|\varphi_k(x_0)| \le \|\varphi_k\|_\infty \le C_1 k^2.
\end{equation}

The Fourier expansion of $\Delta^{\nu} (\phi f)$ is given by
$$ \Delta^{\nu} (\phi f) = \sum_{k=0}^\infty a_k \mu_k^{\nu} \varphi_k, $$
and, since $\Delta^{\nu} (\phi f) \in L^2(B_{R+1}(x_0))$, we have
$$ \sum_{k=1}^\infty |a_k|^2 |\mu_k|^{2\nu} < \infty, $$
which implies that
\begin{equation} \label{eq:hilf2}
\sum_{k=1}^\infty |a_k|^2 |\lambda_k|^{4 \nu} < \infty,
\end{equation}
for every $\nu \in \NN$. We have
\begin{eqnarray*}
\sum_{k=0}^\infty |a_k|\; |\varphi_k(x_0)|\; |\lambda_k|^m &\le& C_1 \sum_{k=0}^\infty
  |a_k|\; k^2 \; |\lambda_k|^m \qquad \text{by \eqref{eq:hilf1}}, \\
&=& C_1 \sum_{k=0}^\infty \left(|a_k|\; k^2 \; |\lambda_k|^{m+l}\right)\; |\lambda_k|^{-l} \\
&\le & C_1 \left( \sum_{k=0}^\infty |a_k|^2 k^4\; |\lambda_k|^{2m+2l} \right)^{1/2}
\left(\sum_{k=0}^\infty |\lambda_k|^{-2l} \right)^{1/2} \\
&\le & C_2 \left( \sum_{k=0}^\infty |a_k|^2\; |\lambda_k|^{2m+2l+4(n+1)} \right)^{1/2}
\left( \sum_{k=0}^\infty |\lambda_k|^{-2l} \right)^{1/2} \qquad \text{by Weyl}.
\end{eqnarray*}
The required finiteness \eqref{eq:goal} follows now from
\eqref{eq:hilf2} and Weyl's law, for the choice $l = n+1$.

Hence $g_R$ defines a smooth function with $a^Rg_R=f$ on $B_R(x_0)$. Now
for a given $f\in\E_0(X)$ and each $N\in\NN$, construct
$g_N\in\E_0([-N-1,N+1])$ as above. We will have $a^N g_N =
f|_{B_N(x_0)}$. By local injectivity of $a$, $g_{N+1}|_{[-N,N]}=g_N$,
and the functions $g_N$ patch together to define a function
$g\in\E_0(\RR)$ with $ag=f$.

This shows that $a$ is a bijective linear continuous map. By the open
mapping theorem \cite[Theorem 17.1]{Tr-67}, $a$ is a topological isomorphism. Using the corollary of Proposition 19.5 in \cite{Tr-67}, we conclude that
its dual $\A: \E_0'(X) \to \E_0'(\RR)$ is also a topological
isomorphism.
\end{proof}

\begin{dfn}
  Let $f \in \D_0(X)$. The {\em spherical Fourier transformation} $\F f:
  \CC \to \CC$ is defined as
  $$ \F f(\lambda) := \langle f, \varphi_\lambda \rangle = \int_X
  f(x) \varphi_\lambda(x) dx. $$
  Analogously, the {\em spherical Fourier transformation} $\F T:
  \CC \to \CC$ of a radial distribution $T \in \E_0'(X)$ is
  defined as
  $$ \F T(\lambda) := \langle T, \varphi_\lambda \rangle. $$

\end{dfn}

Next, we will see that there is a close relationship between the Abel
transform $\A$ and the spherical Fourier transform $\F$. By the
classical Paley-Wiener theorem for distributions (see, e.g., \cite[p.
211]{Do-69} and \cite[Thm. 5.19]{Ehr-70}), the Euclidean Fourier
transform $\E_0'(\RR) \ni S \mapsto \hat S$, with $\hat S(\lambda) :=
\langle S, \psi_\lambda \rangle$ is a topological isomorphism
$\E_0'(\RR) \to \EE_0'$, where $\EE_0'$ is the space of all even
entire functions $f: \CC \to \CC$ of exponential type which are
polynomially bounded on $\RR$, endowed with a suitable topology.

\begin{prop} \label{prop:ATFT}
  We have
  $$ \widehat {\A T} = \F T. $$
\end{prop}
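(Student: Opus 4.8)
The plan is to unwind the definitions so that both sides evaluated at $\lambda$ reduce to the same pairing, using Lemma \ref{lem:aeigen} as the bridge. By definition, $\F T(\lambda) = \langle T, \varphi_\lambda \rangle$ and $\widehat{\A T}(\lambda) = \langle \A T, \psi_\lambda \rangle_\RR$. Here one must be slightly careful: $\psi_\lambda(s) = \cos(\lambda s)$ is not compactly supported, so strictly speaking the pairing $\langle \A T, \psi_\lambda \rangle_\RR$ of a compactly supported distribution against the smooth function $\psi_\lambda$, while $\A T \in \E_0'(\RR)$ makes this well-defined. The duality defining the Abel transform, $\langle \A T, f \rangle_\RR = \langle T, af \rangle_X$, is stated for $f \in \E_0(\RR)$, which does include $\psi_\lambda$. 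So I would simply apply it with $f = \psi_\lambda$:
$$\widehat{\A T}(\lambda) = \langle \A T, \psi_\lambda \rangle_\RR = \langle T, a\psi_\lambda \rangle_X = \langle T, \varphi_\lambda \rangle_X = \F T(\lambda),$$
where the third equality is exactly Lemma \ref{lem:aeigen}.

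The only real point requiring attention is the first step, making sure the Euclidean Fourier transform of the distribution $\A T$ — as introduced just above the proposition via $\hat S(\lambda) := \langle S, \psi_\lambda \rangle$ — genuinely coincides with the pairing that the Abel-transform duality controls. Since both are literally the pairing $\langle \A T, \psi_\lambda \rangle_\RR$ of the radial compactly supported distribution $\A T \in \E_0'(\RR)$ against the even smooth function $\psi_\lambda$, there is nothing to reconcile: the identity is a direct substitution. One should perhaps remark that $\psi_\lambda \in \E_0(\RR)$ for every $\lambda \in \CC$ (it is smooth and even), so the defining relation of $\A$ applies verbatim.

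I expect no substantive obstacle; the content of the proposition is entirely absorbed into Lemma \ref{lem:aeigen}, and the surrounding machinery (that $\A$ and $\hat{\ }$ are topological isomorphisms onto $\E_0'(\RR)$ and $\EE_0'$ respectively) is not needed for the pointwise identity itself, though it does make the statement meaningful as an identity of entire functions in the appropriate spaces. If one wants the cleanest writeup, the entire proof is the two-line display above preceded by the sentence "This is immediate from the definitions and Lemma \ref{lem:aeigen}: for every $\lambda \in \CC$, since $\psi_\lambda \in \E_0(\RR)$, …". The mild subtlety worth flagging is simply that $\psi_\lambda$ is not compactly supported, so one genuinely uses that $\A T$ has compact support (equivalently, that $af$ is defined for all $f \in \E_0(\RR)$, not just $\D_0(\RR)$) — but this is already built into the definition of the Abel transform as a map on $\E_0'(X)$.
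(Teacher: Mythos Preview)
Your proof is correct and matches the paper's own proof essentially verbatim: the paper gives exactly the one-line chain $\widehat{\A T}(\lambda) = \langle \A T, \psi_\lambda \rangle = \langle T, a\psi_\lambda \rangle = \langle T, \varphi_\lambda \rangle = \F T(\lambda)$, invoking Lemma~\ref{lem:aeigen} for the third equality. Your additional remarks about $\psi_\lambda \in \E_0(\RR)$ and the compact support of $\A T$ are sound but not spelled out in the paper.
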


\begin{proof} We have for $\lambda \in \CC$,
  $$ \widehat {\A T}(\lambda) = \langle \A T, \psi_\lambda \rangle
  = \langle T, a \psi_\lambda \rangle = \langle T, \varphi_\lambda
  \rangle = \F T(\lambda). $$
\end{proof}

\begin{rem}
The explicit description of $\A f$ in \eqref{eq:Af} gives the
impression that the Abel transform depends on the choice of the
point $x_0 \in X$ as well as the unit vector $v_0 \in S_{x_0}X$,
since all horospheres $\H_s \subset X$ depend on $v_0$. But, in
fact, $\A f$ only depends on the choice of $x_0$. This follows
from the fact that the Abel transform can be recovered from the
Euclidean Fourier inverse of the spherical Fourier transform.
Obviously, the spherical Fourier transform does only depend on the
choice of $x_0$.
\end{rem}

\begin{prop} \label{prop:AST}
  For $S, T \in \E_0'(X)$ we have
  $$ \A (S * T) = \A S * \A T. $$
\end{prop}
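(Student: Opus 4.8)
The plan is to reduce the identity $\A(S*T) = \A S * \A T$ on $\E_0'(X)$ to the already-established correspondence between the Abel transform and the spherical Fourier transform (Proposition~\ref{prop:ATFT}), together with the classical fact that the Euclidean Fourier transform turns convolution into multiplication. First I would recall that $\F$ is multiplicative on radial distributions: for $S,T \in \E_0'(X)$ one has $\F(S*T) = \F S \cdot \F T$. This is because $\langle S*T, \varphi_\lambda \rangle = \langle S, x \mapsto \langle T, (\varphi_\lambda)_x \rangle \rangle$, and by the displacement lemma $\langle T, (\varphi_\lambda)_x \rangle = \langle T, (\varphi_\lambda)_x \rangle$; more precisely, applying $\pi$ and the displacement lemma gives $\pi((\varphi_\lambda)_x) = \varphi_\lambda(x)\,\varphi_\lambda$, so that $\langle T, (\varphi_\lambda)_x \rangle = \langle T, \pi_x((\varphi_\lambda)_x) \rangle$... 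I would need to be a little careful here, but the upshot is the standard one: $\langle T, (\varphi_\lambda)_x\rangle = \F T(\lambda)\,\varphi_\lambda(x)$, hence $\langle S*T,\varphi_\lambda\rangle = \F T(\lambda)\,\langle S,\varphi_\lambda\rangle = \F S(\lambda)\F T(\lambda)$.

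Next I would invoke Proposition~\ref{prop:ATFT}, which says $\widehat{\A U} = \F U$ for every $U \in \E_0'(X)$. Applying this to $U = S*T$ gives $\widehat{\A(S*T)} = \F(S*T) = \F S \cdot \F T = \widehat{\A S}\cdot\widehat{\A T}$. On the other hand, the Euclidean Fourier transform on $\E_0'(\RR)$ is multiplicative with respect to convolution of compactly supported distributions on $\RR$, i.e. $\widehat{P*Q} = \hat P \cdot \hat Q$ for $P,Q \in \E_0'(\RR)$; applied to $P = \A S$, $Q = \A T$ this yields $\widehat{\A S * \A T} = \widehat{\A S}\cdot\widehat{\A T}$. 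Combining, $\widehat{\A(S*T)} = \widehat{\A S * \A T}$.

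Finally, since the Euclidean Fourier transform $\E_0'(\RR) \to \EE_0'$ is a topological isomorphism (in particular injective), as recalled just before Proposition~\ref{prop:ATFT}, equality of the Fourier transforms of $\A(S*T)$ and $\A S * \A T$ forces $\A(S*T) = \A S * \A T$, which is the claim.

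The main obstacle is the first step: verifying that $\F$ is genuinely multiplicative on $\E_0'(X)$, i.e. that the convolution structure on radial distributions (inherited via radial kernel functions and Proposition~\ref{prop:szabo}) interacts with testing against $\varphi_\lambda$ in the expected way. This rests on the identity $\langle T, (\varphi_\lambda)_x\rangle = \F T(\lambda)\,\varphi_\lambda(x)$ for $T \in \E_0'(X)$, which for functions is the content of the displacement lemma combined with $\pi((\varphi_\lambda)_x) = \varphi_\lambda(x)\varphi_\lambda$ and self-adjointness of $\pi$, and which extends to distributions by the density of $\D_0(X)$ in $\E_0'(X)$ together with continuity of convolution — exactly the density-and-continuity argument already used repeatedly in Section~2. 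Once that identity is in hand, the rest is a formal manipulation. (Alternatively, one could prove Proposition~\ref{prop:AST} directly from the definition of convolution of radial distributions and the intertwining property $a\psi_\lambda = \varphi_\lambda$ of Lemma~\ref{lem:aeigen}, but routing through $\F$ and the Euclidean case is cleaner.)
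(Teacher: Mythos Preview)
Your proposal is correct and follows essentially the same route as the paper: compute $\widehat{\A(S*T)}(\lambda) = \langle S*T,\varphi_\lambda\rangle$, use the displacement identity $\pi((\varphi_\lambda)_x) = \varphi_\lambda(x)\varphi_\lambda$ together with radiality of $T$ to get $\langle T,(\varphi_\lambda)_x\rangle = \F T(\lambda)\varphi_\lambda(x)$, hence $\widehat{\A(S*T)} = \widehat{\A S}\cdot\widehat{\A T} = \widehat{\A S * \A T}$, and conclude by injectivity of the Euclidean Fourier transform on $\E_0'(\RR)$. The only wobble is your brief detour through $\pi_x$, but you land on the right identity; the point is simply that $T$ radial gives $\langle T,f\rangle = \langle T,\pi f\rangle$, so $\langle T,(\varphi_\lambda)_x\rangle = \langle T,\pi((\varphi_\lambda)_x)\rangle$.
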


\begin{proof}
  Note that $\A S, \A T, \A (S*T) \in \E_0'(\RR)$. It was shown in
  \cite[formula (30.1)]{Tr-67} that we have, for these
  distributions on the real line,
  $$ \widehat{\A S} \cdot \widehat{\A T} = \widehat{\A S * \A T}. $$
  We show now that $\widehat{\A (S*T)} = \widehat {\A S} \cdot
  \widehat {\A T}$:
  \begin{eqnarray*}
  \widehat{\A (S*T)}(\lambda) &=& \langle \A (S*T), \psi_\lambda
  \rangle = \langle S*T, \varphi_\lambda \rangle \\
  &=& \langle S, x \mapsto \langle T, (\varphi_\lambda)_x \rangle
  \rangle = \langle S, x \mapsto \langle T, \pi( (\varphi_\lambda)_x ) \rangle
  \rangle \\
  &=& \langle S, x \mapsto \langle T, \varphi_\lambda(x)
  \varphi_\lambda \rangle \rangle = \langle S,
  \langle T, \varphi_\lambda \varphi_\lambda \rangle \rangle \\
  &=& \langle S,a \psi_\lambda \rangle \langle T,a \psi_\lambda
  \rangle = \langle \A S, \psi_\lambda \rangle \cdot \langle \A T,
  \psi_\lambda \rangle = \widehat{\A S}(\lambda) \cdot \widehat{\A T}(\lambda).
  \end{eqnarray*}
  Putting both results together, we conclude that
  $$ \widehat{\A (S*T)} = \widehat{\A S * \A T}. $$
  Since the Euclidean Fourier transform $\E_0'(\RR) \to \EE_0'$,
  $T \mapsto \hat T$, is a topological isomorphism, we finally
  obtain
  $$ \A (S*T) = \A S * \A T, $$
  finishing the proof.
\end{proof}

An immediate consequence of Paley-Wiener for radial distributions
in Euclidean space, Theorem \ref{thm:abij} and Propositions
\ref{prop:ATFT} and \ref{prop:AST} is the following:

\begin{thm}[Paley-Wiener for radial distributions]
  The spherical Fourier transform
  $$ \F T(\lambda) = \langle T, \varphi_\lambda \rangle $$
  defines a topological isomorphism
  $$ \F: \E_0'(X) \to \EE_0'. $$
  Furthermore, for radial distributions $S,T \in \E_0'(X)$, we have
  $$ \F (S*T) = \F S \cdot \F T. $$
\end{thm}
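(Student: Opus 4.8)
The plan is to assemble the statement directly from the pieces already in hand, using the known Paley–Wiener theorem on $\RR$ as a black box. Recall that Theorem \ref{thm:abij} gives that the Abel transform $\A\colon\E_0'(X)\to\E_0'(\RR)$ is a topological isomorphism, and the classical Paley–Wiener theorem for distributions (cited just before Proposition \ref{prop:ATFT}) gives that the Euclidean Fourier transform $S\mapsto\widehat S$, $\widehat S(\lambda)=\langle S,\psi_\lambda\rangle$, is a topological isomorphism $\E_0'(\RR)\to\EE_0'$. By Proposition \ref{prop:ATFT} we have the identity $\widehat{\A T}=\F T$ as functions on $\CC$, which says precisely that $\F$ factors as the composition
$$
\F\colon\E_0'(X)\xrightarrow{\ \A\ }\E_0'(\RR)\xrightarrow{\ \widehat{\phantom{S}}\ }\EE_0'.
$$
Since both factors are topological isomorphisms, so is their composition $\F$. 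This is the entire content of the first assertion; there is essentially nothing to compute.

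For the multiplicativity, I would argue likewise by transport of structure. For $S,T\in\E_0'(X)$, Proposition \ref{prop:AST} gives $\A(S*T)=\A S*\A T$ in $\E_0'(\RR)$. Applying the Euclidean Fourier transform, which turns convolution of compactly supported distributions on $\RR$ into pointwise product (this is the classical fact used already inside the proof of Proposition \ref{prop:AST}, via \cite[formula (30.1)]{Tr-67}), we get
$$
\widehat{\A(S*T)}=\widehat{\A S*\A T}=\widehat{\A S}\cdot\widehat{\A T}.
$$
Now invoke Proposition \ref{prop:ATFT} three times to rewrite each hatted Abel transform as a spherical Fourier transform, obtaining $\F(S*T)=\F S\cdot\F T$.

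There is no real obstacle here: every ingredient — the isomorphism property of $\A$, the Euclidean Paley–Wiener theorem, the identity $\widehat{\A T}=\F T$, and the interaction of $\A$ with convolution — has already been established in the preceding parts of Section \ref{sec:abelfourier}. The only point requiring the slightest care is to make sure the topology on $\EE_0'$ is fixed once and for all so that ``topological isomorphism'' is unambiguous; this is handled by the reference to the Paley–Wiener statement preceding Proposition \ref{prop:ATFT}, which already endows $\EE_0'$ with ``a suitable topology''. With that convention in place, the theorem is simply the assertion that the diagram of isomorphisms commutes and that convolution corresponds to pointwise multiplication at the Euclidean end, both of which are recorded above.
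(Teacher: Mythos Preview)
Your proposal is correct and matches the paper's approach exactly: the paper states the theorem as an immediate consequence of the Euclidean Paley--Wiener theorem, Theorem~\ref{thm:abij}, and Propositions~\ref{prop:ATFT} and~\ref{prop:AST}, which is precisely the chain of isomorphisms and identities you spell out.
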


\begin{prop} \label{prop:commdiag}
The following diagramm commutes:
\begin{equation}\begin{CD}
       \E_0'(X) \times \E_0(X) @>*_X>> \E_0(X)\\
       @VV{\A \times a^{-1}}V @VV{a^{-1}}V \\
       \E_0'(\RR) \times \E_0(\RR) @>*_\RR>> \E_0(\RR)
   \end{CD}
\end{equation}
\end{prop}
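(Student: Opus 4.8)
The plan is to unwind the square to a single identity between smooth functions on $X$ and then verify that identity by pairing against radial distributions and chaining together adjunctions we already have. Unpacking the diagram, what must be shown is that for all $T\in\E_0'(X)$ and $f\in\E_0(X)$ one has $a^{-1}(T*_Xf)=(\A T)*_\RR(a^{-1}f)$. Since $a\colon\E_0(\RR)\to\E_0(X)$ is a topological isomorphism by Theorem \ref{thm:abij}, I would set $h:=a^{-1}f\in\E_0(\RR)$ and rewrite this equivalently as
$$T*_X(ah)=a\bigl((\A T)*_\RR h\bigr)\qquad\text{for all }T\in\E_0'(X),\ h\in\E_0(\RR).$$
All terms are well defined: by Theorem \ref{thm:abij} the distribution $\A T\in\E_0'(\RR)$ has compact support, hence $(\A T)*_\RR h\in\E_0(\RR)$, and both sides of the displayed identity belong to $\E_0(X)$.

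To prove the identity I would pair both sides with an arbitrary $U\in\E_0'(X)$ and use three ingredients: the convolution adjunction $\langle S*T',g\rangle=\langle S,T'*g\rangle$, valid for radial distributions $S,T'$ and a radial function $g$ (this is just the Section~2 formula $\langle S*T',g\rangle=\langle S,x\mapsto\langle T',g_x\rangle\rangle$ together with $T'*g(x)=\langle T',g_x\rangle$, plus its classical analogue on $\RR$); the defining adjunction $\langle\A S,k\rangle_\RR=\langle S,ak\rangle_X$ of the Abel transform; and Proposition \ref{prop:AST}. The computation then runs:
\begin{align*}
\langle U,\,T*_X(ah)\rangle &= \langle U*_X T,\,ah\rangle && \text{(convolution adjunction on $X$)}\\
&= \langle\A(U*_X T),\,h\rangle && \text{(definition of $\A$)}\\
&= \langle\A U*_\RR\A T,\,h\rangle && \text{(Proposition \ref{prop:AST})}\\
&= \langle\A U,\,\A T*_\RR h\rangle && \text{(convolution adjunction on $\RR$)}\\
&= \langle U,\,a(\A T*_\RR h)\rangle && \text{(definition of $\A$)}.
\end{align*}
Since $\E_0'(X)$ separates the points of $\E_0(X)$ — for instance the radialised Diracs $\pi\delta_x$, for which $\langle\pi\delta_x,g\rangle=(\pi g)(x)=g(x)$ on radial $g$ — this yields $T*_X(ah)=a\bigl((\A T)*_\RR h\bigr)$, as wanted.

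I do not expect a genuine obstacle here: the statement is a purely formal consequence of Theorem \ref{thm:abij}, Proposition \ref{prop:AST}, and the convolution identities of Section~2. The only items that warrant a sentence of justification are that the convolution adjunction $\langle S*T',g\rangle=\langle S,T'*g\rangle$ also holds on $\RR$ for even compactly supported distributions (classical, and also obtained by specialising Section~2 to the harmonic manifold $X=\RR$); that $\A U,\A T\in\E_0'(\RR)$, so that Proposition \ref{prop:AST} and the $\RR$-side adjunction legitimately apply (this uses that $\A$ is an isomorphism, Theorem \ref{thm:abij}); and that compact supports are tracked carefully so that each convolution appearing in the chain above is defined.
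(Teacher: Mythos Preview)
Your proof is correct and follows essentially the same route as the paper's: both reduce the diagram to the identity $T*_X(ah)=a(\A T*_\RR h)$ and verify it by pairing, using the definition of $\A$ as the dual of $a$, Proposition~\ref{prop:AST}, and the convolution adjunction $\langle S*T,g\rangle=\langle S,T*g\rangle$. The only cosmetic difference is that the paper pairs against test functions $h\in\D_0(X)$ (using the device $\langle F,h\rangle=(F*h)(x_0)$ for radial $F,h$) and then passes to $T\in\E_0'(X)$ by density, whereas you pair directly against $U\in\E_0'(X)$ and invoke separation; the underlying chain of adjunctions is identical.
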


\begin{proof}
  Since $a: \E_0(\RR) \to \E_0(X)$ and $\A: \E_0'(X) \to
  \E_0'(\RR)$ are topological isomorphisms, in view of
  Proposition \ref{prop:AST}, it only remains  to show that $a( \A T *_\RR f) = T
  *_X a f$: For $g,h \in \D_0(X)$ we obtain
  \begin{eqnarray*}
    \langle g * a f, h \rangle &=& ((g * a f) * h)(x_0) = ((a f * g)
    * h)(x_0) = (a f * (g * h))(x_0) \\
    &=& \langle a f, g*h \rangle = \langle f, \A g * \A h \rangle
    \\
    &=& (f * (\A g * \A h))(x_0) = ((\A g * f) * \A h)(x_0) \\
    &=& \langle \A g * f, \A h \rangle = \langle a(\A g * f), h
    \rangle.
  \end{eqnarray*}
  Since $\D_0(X)$ is dense in $\E_0'(X)$ and $a, \A$ are
  continuous, we conclude the required identity.
\end{proof}

\section{Spectral analysis/synthesis and two radius theorems}\label{sec:tworadthms}

In this section, we discuss the proofs the integral geometric results mentioned in the Introduction. Since the proofs of very similar to the ones given in \cite{PS-10} for Damek-Ricci spaces, we give the ideas and outlines of the proofs, and refer to that paper for more details.

The following proposition is a consequence of the holomorphicity of the map $\lambda \mapsto \varphi_\lambda(r)$, and guarantees that the integral geometric results hold for two {\em generic radii}, as explained in the Introduction.

\begin{prop} \label{prop:count}
For each $r_1>0$ there is an at most countable set of $r_2>0$ such that there exists $\lambda\in\CC$ with $\varphi_\lambda(r_1)=\varphi_\lambda(r_2)=0$.
\end{prop}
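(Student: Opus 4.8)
The plan is to exploit the holomorphicity of $\lambda \mapsto \varphi_\lambda(r)$ asserted in Proposition \ref{prop:efctLap}, together with the fact that $\varphi_\lambda(r_1)$, as a function of $\lambda$, is not identically zero. First I would fix $r_1 > 0$ and consider the entire function $\Phi_{r_1}(\lambda) := \varphi_\lambda(r_1)$. The key observation is that $\Phi_{r_1}$ is not the zero function: indeed, from the power series $\varphi_\lambda(r) = 1 + \sum_{k\ge 1} a_k(r) L^k$ with $L = -(\lambda^2 + H^2/4)$ derived in the proof of Proposition \ref{prop:efctLap}, we have $\varphi_\lambda(r_1) \to 1$ as $|L| \to 0$ uniformly, so $\Phi_{r_1}$ is a non-constant (in fact, non-vanishing near $\lambda^2 = -H^2/4$) entire function, and in particular $\Phi_{r_1} \not\equiv 0$. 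Hence its zero set $Z_{r_1} := \{\lambda \in \CC : \varphi_\lambda(r_1) = 0\}$ is at most countable (a discrete set in $\CC$, by the identity theorem).

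Next I would argue as follows. Suppose $r_2 > 0$ is such that there exists $\lambda \in \CC$ with $\varphi_\lambda(r_1) = \varphi_\lambda(r_2) = 0$. Then $\lambda \in Z_{r_1}$, and $r_2$ is a positive zero of the even entire-in-its-argument smooth function $r \mapsto \varphi_\lambda(r)$ associated to this particular $\lambda$. For each fixed $\lambda \in Z_{r_1}$, the function $\varphi_\lambda$ is a nonzero solution of the second-order ODE $(\theta \varphi_\lambda')' = L \theta \varphi_\lambda$ (see the proof of Proposition \ref{prop:efctLap}), hence it is a nonzero real-analytic function of $r$ on $[0,\infty)$ — it is analytic because $\theta$ is analytic (the space is analytic, as noted in the Introduction) and $\theta > 0$ on $(0,\infty)$, and it is not identically zero since $\varphi_\lambda(0) = 1$. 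Therefore its zero set in $(0,\infty)$ is discrete, hence at most countable. Thus the set of admissible $r_2$ is contained in
$$ \bigcup_{\lambda \in Z_{r_1}} \{ r > 0 : \varphi_\lambda(r) = 0 \}, $$
a countable union of at most countable sets, hence at most countable. This proves the proposition.

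The one point requiring a little care is the analyticity (or at least real-analyticity, or even just the discreteness of zeros) of $r \mapsto \varphi_\lambda(r)$ near $r = 0$, where the coefficient $\theta'/\theta$ of the ODE \eqref{eq:Lappolcoord} is singular. I would handle this either by invoking the analyticity of $X$ (so $\theta$ is real-analytic and the Volterra power series in $L$ from the proof of Proposition \ref{prop:efctLap} together with standard ODE theory gives analyticity of $\varphi_\lambda$ in $r$ as well), or, more elementarily, by noting that a nonzero solution of $(\theta\varphi')' = L\theta\varphi$ cannot vanish together with its first derivative at any $r_0 > 0$ (by uniqueness for the regular ODE on $(0,\infty)$), so its zeros on $(0,\infty)$ are isolated, hence countable; and $r=0$ is not a zero since $\varphi_\lambda(0)=1$. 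Either route gives countability of the zero set of each $\varphi_\lambda$. The main obstacle — and it is a mild one — is simply making the "non-identically-zero" and "discrete zero set" statements rigorous at the two places they are used (in $\lambda$ for $\varphi_\lambda(r_1)$, and in $r$ for each fixed $\lambda$); everything else is a routine application of the identity theorem.
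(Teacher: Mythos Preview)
Your proof is correct and follows essentially the same approach as the paper: use holomorphicity of $\lambda\mapsto\varphi_\lambda(r_1)$ (and the fact that this function is not identically zero) to conclude that $Z_{r_1}$ is at most countable, then use the second-order ODE and uniqueness of solutions to show that each $\varphi_\lambda$ has only isolated zeros in $r$, and finish with a countable union of countable sets. You are in fact slightly more careful than the paper in explicitly verifying $\Phi_{r_1}\not\equiv 0$ and in addressing the potential issue at $r=0$; the paper simply invokes the ODE uniqueness argument (your ``more elementary'' route) to rule out a limit point of zeros.
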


\begin{proof}
Let $r_1>0$. By Proposition \ref{prop:efctLap}, the set $S_{r_1}=\{\lambda\in\CC\mid\varphi_\lambda(r_1)=0\}$ is at most countable. The zero set $\varphi_\lambda^{-1}(0)$ of $\varphi_\lambda$, is also at most countable because $\varphi_\lambda$ satisfies the differential equation $\varphi_\lambda''=-\left(\lambda^2+\frac{H^2}{4}\right)\varphi_\lambda-\frac{\theta'}{\theta} \varphi_\lambda$, the solution of which is determined by the values of $\varphi_\lambda(r_0)$ and $\varphi_\lambda'(r_0)$ for any $r_0>0$. Thus $\varphi_\lambda$ can not have a limit point of zeros. It follows that the set
$$\{r_2>0\mid\exists\lambda\in\CC : \varphi_\lambda(r_1)=\varphi_\lambda(r_2)=0\}=\bigcup_{\lambda\in S_{r_1}}\varphi_\lambda^{-1}(0)$$
is an at most countable union of at most countable sets, hence itself at most countable.
\end{proof}
The same reasoning applies to the function $\varphi_\lambda-1$ (we must exclude here $\lambda = \pm i \frac{H}{2}$, since then $\varphi_\lambda \equiv 1$), and the function $\Phi_\lambda$ given by
\begin{equation} \label{eq:Phil}
\Phi_\lambda(r)=\int_0^r\theta(\rho)\varphi_\lambda(\rho)d\rho\,
\end{equation}
where $\theta$ denotes the volume density function.

Analogously to \cite{PS-10}, we have {\em spectral analysis} and {\em spectral synthesis} in $\E_0(X)$. Let us briefly explain this.

A {\em variety} $V\subset\E_0(X)$ is a closed subspace satisfying
$\E_0'(X) * V \subset V$, which is proper ($V \neq \E_0(X)$) and contains a non-zero function.

It follows from Propositions \ref{prop:commdiag} and \ref{prop:AST},
together with the isomorphism Theorem \ref{thm:abij}, that the transformation $a$ maps
varieties of $\E_0(\RR)$ to varieties of $\E_0(X)$, and that every
variety in $\E_0(X)$ is of the form $a(W)$ with a variety
$W\subset\E_0(\RR)$. By Schwartz's Theorem on varieties in $\E_0(\RR)$ (see Theorem 2.4 in \cite{PS-10}), we have
$$W=\overline{\mathrm{span}\{\psi_{\lambda,k}\in W\}},$$
that is, each variety is the closure of the span of its spectrum, where
the spectrum of a variety is the set of all those $\psi_{\lambda,k}$
contained in $V$. Because of \eqref{eq:apsiphi} this carries over to
$X$, i.e. we have for any variety $V\subset\E_0(X)$ that
$$V=\overline{\mathrm{span}\{\phi_{\lambda,k}\in V\}}.$$
This property is called {\em spectral synthesis}. For varieties $V
\subset \E_0(X)$, it was shown in \cite[Lemma 4.2]{PS-10} that
$\varphi_{\lambda,k} \in V$ implies $\varphi_{\lambda,l} \in V$, for
all $0 \le l \le k$. There $X$ was a Damek-Ricci space, but the
arguments carry over verbatim for general harmonic
manifolds. Therefore, spectral synthesis implies that every
variety $V \subset \E_0(X)$ contains a radial eigenfunction $\varphi_\lambda$. This
latter property is called {\em spectral analysis}.

As an immediate application, we can prove the analogues of the Two-Radius Theorems in \cite{PS-10} for general non-compact harmonic spaces.

\begin{thm} \label{theomain1}
Let $(X,g)$ be a simply connected, non-compact harmonic manifold. Then we have the following facts.
\begin{enumerate}
\item Let $r_1, r_2 > 0$ be such that the equations
  \[ \varphi_\lambda(r_j)=0,\, j=1,2, \]
  have no common solution $\lambda \in \CC$. Suppose $f \in C(X)$ and
  \[ \int_{S_r(x)} f = 0 \]
  for $r=r_1,r_2$ and all $x \in X$. Then $f=0$.
\item For $\lambda\in\CC$, let $\Phi_\lambda$ be given by \eqref{eq:Phil}.
  Let $r_1, r_2 > 0$
  be such that the equations
  \[ \Phi_\lambda(r_j)=0,\, j=1,2, \]
  have no common solution $\lambda\in\CC$. Suppose $f\in C(X)$  and
  \[ \int_{B_r(x)} f = 0 \]
  for $r=r_1,r_2$ and all $x\in X$. Then $f=0$.
\end{enumerate}
\end{thm}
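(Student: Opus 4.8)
The plan is to reduce both statements to a variety argument using spectral analysis, exactly as in \cite{PS-10}. For part (1), suppose $f \in C(X)$ satisfies $\int_{S_{r_j}(x)} f = 0$ for $j = 1, 2$ and all $x$. In terms of the distributions $T_{r_j} \in \E_0'(X)$ from \eqref{eq:Trexamp}, this says $T_{r_j} * f = 0$ on $X$. First I would regularize: for a radial Dirac sequence $\rho_\epsilon \in \D_0(X)$, the function $f_\epsilon := f * \rho_\epsilon$ lies in $\E_0'(X)$-convolvable form and still satisfies $T_{r_j} * f_\epsilon = 0$ (using commutativity and associativity of the convolution of radial distributions with arbitrary functions, available after the density remarks preceding Proposition \ref{prop:pimodprop}); it suffices to show each $f_\epsilon$ is constant with the constant forced to zero, and then let $\epsilon \to 0$. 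Actually the cleaner route is to consider $g := \pi_x f$ for varying $x$, or simply to note that $\int_{S_{r_j}(x)} f = 0$ for all $x$ is equivalent to the radial distribution $T_{r_j}$ annihilating the closed translation-and-spherical-average-invariant subspace generated by $f$; I would phrase everything directly in $\E_0(X)$ by replacing $f$ with $\pi f$ after observing (via Proposition \ref{prop:pimodprop} and the mean value hypothesis being closed under the spherical projector) that the hypothesis propagates to all spherical averages of $f$.

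The key step is then: let $V \subset \E_0(X)$ be the smallest variety containing $\pi f$ (assuming $f \not\equiv 0$, so that after regularization some $\pi_x f \neq 0$ and $V$ is a genuine variety). Since $T_{r_j} * V \subset V$ and $T_{r_j} * (\pi f) = 0$, and $V$ is the closed span of its spectrum by spectral synthesis, every $\varphi_{\lambda,k} \in V$ satisfies $T_{r_j} * \varphi_{\lambda,k} = 0$. By spectral analysis $V$ contains some $\varphi_\lambda$, and the convolution identity $T_r * \varphi_\lambda = \varphi_\lambda(r)\,\vol(S_r)\,\varphi_\lambda$ — which follows from $\pi((\varphi_\lambda)_y) = \varphi_\lambda(y)\varphi_\lambda$ together with \eqref{eq:Tconvf} — forces $\varphi_\lambda(r_1) = \varphi_\lambda(r_2) = 0$, contradicting the hypothesis that these have no common solution. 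Hence $V$ cannot exist, so $\pi f = 0$, and since this holds for $\pi_x f$ at every center $x$ we get $f = 0$ (a continuous function all of whose spherical averages about every point vanish is identically zero, e.g.\ by letting the radius tend to $0$). For part (2), the identical argument applies with $T_{r_j}$ replaced by the distribution $B_{r_j}$ defined by $\langle B_r, h \rangle = \int_{B_r} h$; the analogue of the convolution eigenvalue identity is $B_r * \varphi_\lambda = \Phi_\lambda(r)\,\varphi_\lambda$ up to a positive constant, since integrating $\varphi_\lambda$ over the ball in polar coordinates produces $\int_0^r \theta(\rho)\varphi_\lambda(\rho)\,d\rho = \Phi_\lambda(r)$, so the non-common-zero hypothesis on $\Phi_\lambda$ yields the same contradiction.

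I expect the main obstacle to be the bookkeeping around the regularization and the passage between "the hypothesis holds pointwise as an integral identity" and "a certain radial distribution annihilates a variety in $\E_0(X)$": one must check that the closed subspace generated by (the spherical averages of) $f$ under the $\E_0'(X)$-action is genuinely a variety — in particular proper and nonzero — which requires knowing $f \not\equiv 0$ implies some spherical average of $f$ is a nonzero element of $\E_0(X)$, and this is where smoothing by a radial Dirac sequence is needed so that we are working inside $\E_0(X)$ rather than merely with continuous functions. The spectral analysis/synthesis input and the explicit convolution formulas $T_r * \varphi_\lambda = c\,\varphi_\lambda(r)\varphi_\lambda$, $B_r * \varphi_\lambda = c'\,\Phi_\lambda(r)\varphi_\lambda$ are the two computational pillars; everything else is soft functional analysis that transfers verbatim from \cite{PS-10} via the isomorphism Theorem \ref{thm:abij}.
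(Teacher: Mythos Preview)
Your proposal is correct and follows essentially the same route as the paper: reduce to radial data via $\pi$ and a radial Dirac sequence, then use spectral analysis on a variety in $\E_0(X)$ together with the identities $T_r*\varphi_\lambda=\vol(S_r)\varphi_\lambda(r)\,\varphi_\lambda$ (resp.\ $B_r*\varphi_\lambda=\omega_n\Phi_\lambda(r)\,\varphi_\lambda$) to produce a common zero of $\varphi_\lambda(r_1),\varphi_\lambda(r_2)$ (resp.\ $\Phi_\lambda(r_1),\Phi_\lambda(r_2)$). The only cosmetic difference is that the paper works with the annihilator variety $\{h\in\E_0(X):T_{r_1}*h=0=T_{r_2}*h\}$ directly (and argues by contradiction after fixing $x_0$ with $g(x_0)\neq 0$), whereas you take the variety generated by $\pi f$; your logic that $T_{r_j}$ kills this generated variety is correct, though the appeal to spectral synthesis at that step is unnecessary---commutativity of convolution and continuity already give $T_{r_j}*V=0$.
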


\begin{proof} Let us start with the proof of the first assertion.
  The proof proceeds by contradiction, i.e., let $g \in C(X)$ be a {\em non-zero} function, satisfying $\int_{S_{r_i}(x)} g = 0$ for all $x \in X$ and $i=1,2$. We choose our reference point $x_0 \in X$ such that $g(x_0) \neq 0$. Consider the distributions $T_r\in\E'_0(X)$, given by
 $T_rf=\int_{S_r(x_0)}f$. First recall \eqref{eq:Trexamp}, namely,
 $\int_{S_r(x)}f=(T_r*f)(x)$. This implies that we have $T_{r_1}*g = T_{r_2}*g = 0$.  Using the extension of Proposition \ref{prop:pimodprop} to continuous functions $f \in C(X)$, we conclude that $g_0 := \pi g$ also satisfies $T_{r_1}*g_0 =  T_{r_2}*g_0 = 0$. Without loss of generality, we can assume that $g_0$ is a smooth radial function, since every continuous radial function $g_0$ can be approximated, uniformly on compact sets, by functions $g_0^\epsilon := g_0 * \rho_\epsilon \in \E_0(X)$ (via a Dirac sequence $\rho_\epsilon \in \D_0(X)$), such that we still have $T_{r_1}*g_0^\epsilon = T_{r_2}*g_0^\epsilon = 0$. Therefore, we can now assume that $g_0 \in \E_0(X)$. Then
  $$ V = \left\{ f \in \E_0(X) \mid
  \int_{S_{r_1}(x)} f = 0 = \int_{S_{r_2}(x)} f \text{ for all } x \in X\right\}
  = \{ f\in\E_0(X) \mid T_{r_1}*f = 0 = T_{r_2}*f\}$$
  contains $g_0$ and is a variety in $\E_0(X)$, since for all $T \in \E_0'(X)$
  and all $f \in V$:
  $$ T_{r_i}*(T*f) = T*(T_{r_i}*f) = 0, \qquad \text{with\, $i=1,2$}, $$
  by commutativity and associativity of the convolution for radial
  distributions. By spectral analysis, $V$ must contain a
  $\varphi_\lambda$. But $T_r(\varphi_\lambda)=\vol(S_r(x_0))\varphi_\lambda(r)$,
  hence we have $\varphi_\lambda(r_1)=0=\varphi_\lambda(r_2)$, contradicting the
  assumption about the radii $r_1$, $r_2$.

  For the second assertion we work with the distributions
  $T_r\in\E'_0(X)$ given by $T_rf=\int_{B_r(x_0)}f$. As before,
  $T_r*f(x)=\int_{B_r(x)}f$ for all $x\in X$. Now, the proof proceeds as
  above, with the variety
  \begin{equation} \label{eq:V0}
  V = \left\{ f \in \E_0(X) \mid
  \int_{B_{r_1}(x)} f = 0 = \int_{B_{r_2}(x)} f \text{ for all } x \in X\right\}
  = \{ f\in\E_0(X) \mid T_{r_1}*f = 0 = T_{r_2}*f\}.
  \end{equation}
  Again, we conclude the existence of a $\varphi_\lambda$ satisfying
  $$ 0 = \int_{B_{r_i}(x)}\varphi_\lambda(x)dx =
        \omega_n \int_0^{r_i}\varphi_\lambda(\rho)\theta(\rho)d\rho= \omega_n \Phi_\lambda(r_i), $$
  where ${\rm dim}(X) = n+1$ and $\omega_n$ is the volume of the standard unit sphere of dimension $n$. As before, this contradicts to the choice of the $r_i$.
\end{proof}

Also harmonicity of a function follows from the mean value property
for two suitably chosen radii:

\begin{thm} \label{theomain3}
  Let $r_1, r_2 > 0$ be such that the equations
  \[ \varphi_\lambda(r_j)=1,\, j=1,2, \]
  have no common solution $\lambda \in \CC \backslash\{\pm iH/2\}$.

  Then $f \in C^\infty(X)$ is harmonic if and only if
  \[ \frac{1}{{\rm vol}(S_r(x))} \int_{S_r(x)} f = f(x)\]
  for $r=r_1,r_2$ and all $x \in X$.
\end{thm}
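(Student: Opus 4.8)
The plan is to prove the two implications separately. The ``only if'' direction is immediate: by Willmore's characterisation of harmonic spaces (\cite{Wi-50}, recalled in the Introduction), every harmonic function on a harmonic manifold has the mean value property for \emph{all} radii, in particular for $r_1$ and $r_2$. For the converse, let $f\in C^\infty(X)$ have the mean value property for $r=r_1,r_2$ at all points. I would first reformulate this as a system of convolution equations: write $V(r):=\vol(S_r(x))$ for the (center-independent) volume of a geodesic sphere of radius $r$, and, for $y\in X$, let $T_r^{(y)}\in\E_0'(X,y)$ be the radial distribution $\langle T_r^{(y)},h\rangle=\int_{S_r(y)}h$ and $\delta_y$ the Dirac distribution at $y$ (which is the identity of the convolution product). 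By \eqref{eq:Trexamp}, centred at $y$, one has $(T_r^{(y)}*h)(z)=\int_{S_r(z)}h$, so the mean value property at all points for a radius $r$ is precisely the single equation $S_r^{(y)}*f=0$, where $S_r^{(y)}:=T_r^{(y)}-V(r)\,\delta_y\in\E_0'(X,y)$. Thus the hypothesis reads $S_{r_1}^{(y)}*f=S_{r_2}^{(y)}*f=0$ for one, equivalently every, $y$.

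Next I would fix an arbitrary $y\in X$ and put $g:=\pi_y f\in\E_0(X,y)$, which is smooth since $f$ is. By Proposition \ref{prop:pimodprop}, whose proof works verbatim with $y$ as reference point, $S_{r_j}^{(y)}*g=\pi_y(S_{r_j}^{(y)}*f)=0$ for $j=1,2$, so $g$ lies in
$$ V_y:=\{\,h\in\E_0(X,y)\mid S_{r_1}^{(y)}*h=S_{r_2}^{(y)}*h=0\,\}. $$
The space $V_y$ is closed and convolution-invariant (commutativity and associativity of the convolution of radial distributions), it is proper (a nonnegative radial bump supported in a small ball about $y$ violates the $r_1$-equation at $y$), and it contains the constant $1$, since $T_r^{(y)}*1\equiv V(r)$; hence $V_y$ is a variety in $\E_0(X,y)$. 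By spectral synthesis (set up with reference point $y$) we have $V_y=\overline{\mathrm{span}\{\varphi_{\lambda,k}^{(y)}\in V_y\}}$, where $\varphi_\lambda^{(y)}$ and $\varphi_{\lambda,k}^{(y)}=\frac{d^k}{d\lambda^k}\varphi_\lambda^{(y)}$ are the radial (generalised) eigenfunctions about $y$. Using $\langle\delta_y,\varphi_\lambda^{(y)}\rangle=1$ and $\int_{S_r(y)}\varphi_\lambda^{(y)}=V(r)\varphi_\lambda(r)$ one computes $\F S_{r_j}^{(y)}(\lambda)=V(r_j)(\varphi_\lambda(r_j)-1)$, hence $S_{r_j}^{(y)}*\varphi_\lambda^{(y)}=V(r_j)(\varphi_\lambda(r_j)-1)\,\varphi_\lambda^{(y)}$; differentiating this identity in $\lambda$ (exactly as in \cite{PS-10}), membership $\varphi_{\lambda,k}^{(y)}\in V_y$ forces $\lambda$ to be a common zero of the entire functions $\lambda\mapsto\varphi_\lambda(r_1)-1$ and $\lambda\mapsto\varphi_\lambda(r_2)-1$ of order at least $k+1$. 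By hypothesis the only such common zeros lie in $\{\pm iH/2\}$.

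The step I expect to require genuine care is the local analysis of these entire functions at $\lambda=\pm iH/2$. From the power series $\varphi_\lambda(r)=1+\sum_{k\ge1}a_k(r)L^k$ of Proposition \ref{prop:efctLap}, with $L=-(\lambda^2+\frac{H^2}{4})$ and $a_1(r)=\int_0^r\int_\rho^r\frac{\theta(\rho)}{\theta(\sigma)}\,d\sigma\,d\rho>0$ for $r>0$, one reads off that, when $H>0$, the function $\lambda\mapsto\varphi_\lambda(r)-1$ has a \emph{simple} zero at each of $\pm iH/2$ (because $\frac{dL}{d\lambda}=-2\lambda\neq 0$ there), whereas, when $H=0$, it has a zero of order exactly $2$ at $\lambda=0$ (it is even in $\lambda$). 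In either case the only exponential monomials admitted into $V_y$ are the constants $\varphi_{\pm iH/2,0}^{(y)}\equiv 1$ (and, when $H=0$, also the function $\varphi_{0,1}^{(y)}=\frac{d}{d\lambda}\varphi_\lambda^{(y)}|_{\lambda=0}=0$, by $\varphi_{-\lambda}=\varphi_\lambda$). Hence $V_y=\CC\cdot 1$.

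It follows that $g=\pi_y f$ is the constant $g(y)=f(y)$; that is, $\frac{1}{V(r)}\int_{S_r(y)}f=f(y)$ for \emph{all} $r>0$. Since $y\in X$ was arbitrary, $f$ has the mean value property at every point for every radius, and is therefore harmonic --- either by invoking the classical fact recalled in the Introduction, or directly by letting $r\to0$ in the expansion $\frac{1}{V(r)}\int_{S_r(y)}f=f(y)+\frac{r^2}{2(n+1)}\Delta f(y)+O(r^4)$, which yields $\Delta f\equiv 0$.
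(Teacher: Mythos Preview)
Your proof is correct and follows essentially the same route as the paper's: recast the two-radius mean value property as a pair of convolution equations $S_{r_j}*f=0$, radialize at an arbitrary point, and apply spectral synthesis to the resulting closed $*$-invariant subspace to see that only the constants survive, whence $\pi_y f\equiv f(y)$ for every $y$. The only organisational differences are that you work with the full solution variety $V_y$ while the paper uses the cyclic variety $V_0^g=\overline{\{T*g:T\in\E_0'(X)\}}$ generated by the radialized function, and that you make the local analysis at $\lambda=\pm iH/2$ (order of vanishing of $\lambda\mapsto\varphi_\lambda(r)-1$, including the $H=0$ case) explicit, whereas the paper defers these details to \cite{PS-10}.
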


\begin{proof}
  We now use the distributions $T_r f = \frac{1}{\vol(S_r(x_0))}
  \left( \int_{S_r(x_0)} f \right) - f(x_0)$ and assume, as above, the
  existence of a function $g \in \E_0(X)$ with $T_{r_1} * g =
  T_{r_2} * g = 0$ and $\Delta g \neq 0$ (i.e., $g$ not
  harmonic). As in the proof of Theorem 1.3 of \cite{PS-10}, we
  consider the variety $V_0^g = \overline{\{ T * g \mid T \in \E_0'(X) \}}
  \subset \E_0(X)$, and show that the only non-zero functions
  $\varphi_{\lambda,k} \in V_0^g$ are $\varphi_{\pm i H/2} =
  1$. Therefore, $V_0^g$ consists only of constant functions,
  contradicting to $g \in V_0^g$ and $\Delta g \neq 0$.
\end{proof}

\section{Cheeger constant and heat kernel}\label{sec:cheeger}

The {\em Cheeger constant} $h(X)$ of a non-compact, complete
$n$-dimensional Riemannian manifold $(X,g)$ is defined as
\begin{equation} \label{eq:cheeg}
h(X) := \inf_{K \subset X} \frac{\vol(\partial K)}{\area(K)},
\end{equation}
where $K$ ranges over all connected, open
submanifolds of $X$ with compact closure and smooth boundary.
The {\em exponential volume growth} of $X$ is defined by
\begin{equation} \label{eq:mu}
\mu(X) := \limsup_{r \to \infty} \frac{\log \vol(B_r(x))}{r}.
\end{equation}
One easily checks that $\mu(X)$ does not depend on the choice $x
\in X$. The following result states that several fundamental constants of non-compact harmonic spaces agree.

\begin{thm} \label{thm:cheeg}
  Let $(X,g)$ be a non-compact, simply connected harmonic
  space and $H \ge 0$ be the mean curvature of its horospheres.
  Then we have the equalities
  $$ h(X) = H = \mu(X) = \lim_{r \to \infty} \frac{\log \vol(B_r(x)}{r}. $$
\end{thm}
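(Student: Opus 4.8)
The strategy is to prove the chain of (in)equalities
\[
h(X) \;\ge\; H, \qquad H \;=\; \mu(X), \qquad \mu(X) \;\ge\; h(X),
\]
together with the observation that the \emph{limit superior} in the definition of $\mu(X)$ is in fact a genuine limit for harmonic spaces. The main tool throughout is Corollary \ref{cor:horvolform}, which gives the exact volume form $\Psi^*\omega = e^{sH}\,ds\wedge\omega_0$ adapted to a horosphere foliation, together with the fact (recorded after \eqref{eq:Lappolcoord}) that the mean curvature $\theta'(r)/\theta(r)$ of geodesic spheres decreases monotonically to $H$.

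\emph{Step 1: $H = \mu(X)$ and the limit exists.} Using geodesic polar coordinates, $\vol(B_r(x_0)) = \omega_n\int_0^r \theta(\rho)\,d\rho$. Since $(\log\theta)'(\rho) = \theta'(\rho)/\theta(\rho) \searrow H$, we have $\theta(\rho) \le \theta(\rho_0) e^{H(\rho-\rho_0)}$ for $\rho \ge \rho_0$, while also $\theta(\rho)$ grows at least like $e^{H\rho}$ up to subexponential corrections because $\theta'/\theta \to H$. A standard Cesàro/integration argument then shows $\tfrac{1}{r}\log\vol(B_r(x_0)) \to H$, in particular the limsup in \eqref{eq:mu} is a limit and equals $H$. (One can also read off $\tfrac1r\log\vol(S_r) \to H$ directly from $\theta'/\theta\to H$ by integrating.)

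\emph{Step 2: $h(X) \ge H$.} For any admissible $K$ one wants $\vol(\partial K) \ge H\cdot\area(K)$. The clean way is the coarea/divergence argument with the Busemann function: fix any unit vector and its Busemann function $b$, with unit normal field $N = -\mathrm{grad}\,b$ and $\Delta b = H$, i.e.\ $\mathrm{div}\,N = -H$. Apply the divergence theorem to $N$ on $K$:
\[
-H\,\area(K) \;=\; \int_K \mathrm{div}\,N \;=\; \int_{\partial K} \langle N, \nu\rangle \;\ge\; -\vol(\partial K),
\]
since $|\langle N,\nu\rangle|\le 1$; this gives $\vol(\partial K) \ge H\,\area(K)$, hence $h(X) \ge H$.

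\emph{Step 3: $\mu(X) \ge h(X)$, equivalently $h(X) \le H$.} Here I would test the Cheeger quotient on metric balls $K = B_r(x_0)$ (slightly smoothed, or using that a.e.\ $r$ is a regular value). Then $\vol(\partial K) = \area(S_r(x_0)) = \omega_n\theta(r)$ and $\area(K) = \vol(B_r(x_0)) = \omega_n\int_0^r\theta(\rho)\,d\rho$, so
\[
h(X) \;\le\; \frac{\theta(r)}{\int_0^r \theta(\rho)\,d\rho}.
\]
By Step 1 the right side tends to $H$ as $r\to\infty$ (ratio of a function growing like $e^{Hr}$ to its integral), giving $h(X) \le H$. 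Combining Steps 1–3 closes the loop.

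\emph{Main obstacle.} The only genuinely delicate point is Step 1 — showing that $\theta'/\theta \to H$ really forces $\tfrac1r\log\vol(B_r) \to H$ (rather than just $\le H$), i.e.\ ruling out that subexponential factors conspire to lower the growth rate, and handling the case $H=0$ where one must show the ratio $\theta(r)/\int_0^r\theta$ still tends to $0$. This is a soft analysis lemma about monotone logarithmic derivatives, but it is where the care lies; everything else is the divergence theorem applied once with the Busemann function and once with metric balls.
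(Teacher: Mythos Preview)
Your proposal is correct, and the overall architecture (the chain $h(X)\ge H$, $H=\mu(X)$, $h(X)\le H$) matches the paper's. The one genuine difference is in Step~2. The paper proves $h(X)\ge H$ by pulling an admissible $K$ back through the horosphere diffeomorphism $\Psi:\RR\times\H_0\to X$, projecting to $\H_0$, using Sard's theorem to define top/bottom graphs $f^\pm$, and then estimating $\vol(K)$ and $\area(\partial K^\pm)$ via the explicit volume form $e^{sH}ds\wedge\omega_0$ (following \cite{PS-03}). Your divergence--theorem argument with $N=-\mathrm{grad}\,b$ and $\mathrm{div}\,N=-\Delta b=-H$ is shorter and more conceptual: it uses only that $b$ is smooth with $|\nabla b|=1$ and $\Delta b=H$, and avoids the projection, Sard, and graph bookkeeping entirely. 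Both are valid; yours is the cleaner route.

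For Steps~1 and~3 you and the paper do essentially the same thing. What you call a Ces\`aro/integration argument the paper phrases as two applications of l'Hospital:
\[
\lim_{r\to\infty}\frac{\log\vol(B_r)}{r}=\lim_{r\to\infty}\frac{A(r)}{\vol(B_r)}=\lim_{r\to\infty}\frac{A'(r)}{A(r)}=H,
\]
using that $\theta'/\theta\searrow H$ from \cite{RaSha-03}. This also disposes of the ``obstacle'' you flag (including $H=0$): since $\theta$ is nondecreasing and positive, $\int_0^r\theta\to\infty$, so l'Hospital applies and gives $\theta(r)/\int_0^r\theta\to H$ directly, without worrying about subexponential corrections.
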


\begin{proof}
  Our first goal is to prove $h(X) \ge H$. The proof is very similar
  to the proof of Theorem 3 in \cite{PS-03}. We refer the reader to
  this reference for more details. Let $\Psi: \RR \times \H_0 \to X$
  be the diffeomorphism introduced in Section
  \ref{sec:abelfourier}. We work in the space $X' = \RR \times \H_0$
  with the induced Riemannian metric $g' = \Psi^* g$. We know from
  Corollary \ref{cor:horvolform} that the volume element on $X'$ is
  given by $e^{sH} dt \wedge \omega_0$.

  W.l.o.g., we can assume $H > 0$, for otherwise there is nothing
  to prove. Let $P: X' \to \H_0$ be the canonical projection and $K
  \subset X'$ be an admissible set of \eqref{eq:cheeg}. Let $U$ be
  the projection of $K$ without the critical points of
  $P\vert_{\partial K}$. By Sard's theorem, $U$ has full measure in $P(K)$.
  For $x \in U$, let $f^{\pm}(x)$ be the maximum, resp., minimum of the set
  $\{ t \in \RR \mid (t,x) \in K \}$. Let $\widetilde K := \{ (x,t) \mid x \in U,
  f^-(x) \le t \le f^+(x) \}$. Then
  $$ \vol(K) \le \vol(\widetilde K) = \frac{1}{H} \int_U \left(
  e^{f^+(x) H} - e^{f^-(x) H} \right) \omega_0(x). $$
  Now we introduce the sets $\partial K^{\pm} := \{ (u,f^{\pm}(u))
  \mid u \in U \}$. Obviously, we have $\area(\partial K) \ge
  \area(\partial K^+) + \area(\partial K^-)$ and, analogously as
  in \cite{PS-03}, we obtain the estimate
  $$ \area(\partial K^{\pm}) \ge \int_U e^{f^\pm(x) H} \omega_0(x). $$
  This yields the desired estimate
  $$ \frac{\area(\partial K)}{\vol(K)} \ge \frac{\area(\partial K^+) +
  \area(\partial K^-))}{\vol(K)} \ge H. $$

Let $f(r) = \log \vol(B_r(x))$. Then, for all $r > 0$,
$$ f'(r) = \frac{\area(S_r(x))}{\vol(B_r(x))} \ge h(X). $$
It was shown in \cite{RaSha-03} that $A(r) := \area(S_r(x))$ is
strictly increasing in $r$ and that $\frac{A'}{A}$ is monotone
decreasing with limit $H \ge 0$. Applying l'Hospital's rule twice,
we conclude that
$$
\lim_{r \to \infty} \frac{f(r)}{r} = \lim_{r \to \infty} f'(r) =
\lim_{r \to \infty} \frac{A(r)}{\vol(B_r(x))} = \lim_{r \to
\infty} \frac{A'(r)}{A(r)} = H.
$$
Thus we have
$$ h(X) \le \mu(X) = H = \lim \frac{\log \vol(B_r(x)}{r}. $$
Both estimates together prove the theorem.
\end{proof}

\begin{rem}
  Ch. Connell proved in \cite{Co-00} that the Cheeger
  constant and the exponential volume growth of NCHS (simply connected
  strictly negatively curved homogeneous spaces) agree. This agreement
  fails when dropping the curvature condition: horospheres $\H$
  with barycentric normal directions in higher rank symmetric spaces
  of non-compact type are unimodular solvable groups with $h(\H) = 0$
  and $\mu(\H) > 0$ (see \cite{Pe-01}). The
  simplest example of this type is ${\rm Solv}(3)$, the diagonal horosphere
  in the product of two hyperbolic planes. Note that our Theorem
  \ref{thm:cheeg} does not contain any curvature condition.
\end{rem}

Applying Cheeger's inequality and Brooks' result $\lambda_0^{\rm ess}
\le \mu(X)^2/4$ (see \cite{Br-81}), we obtain

\begin{cor}
  Let $(X,g)$ be a non-compact, simply connected harmonic space and $H
  \ge 0$ be the mean curvature of its horospheres. Then the bottom of
  the spectrum and of the essential spectrum agree, and
  $$ \lambda_0(X) = \lambda_0^{\rm ess}(X) = \frac{H^2}{4}. $$
\end{cor}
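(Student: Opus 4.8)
The plan is to combine three ingredients: Theorem~\ref{thm:cheeg}, the classical Cheeger inequality $\lambda_0(X)\ge h(X)^2/4$, and Brooks' theorem $\lambda_0^{\rm ess}(X)\le\mu(X)^2/4$. First I would invoke Theorem~\ref{thm:cheeg} to record the equality $h(X)=H=\mu(X)$. Then Cheeger's inequality for the bottom of the $L^2$-spectrum of the Laplace--Beltrami operator on a complete noncompact manifold (see, e.g., \cite{Cha-84}) gives
$$\lambda_0(X)\ \ge\ \frac{h(X)^2}{4}\ =\ \frac{H^2}{4}.$$
On the other hand, Brooks' result \cite{Br-81} applies precisely because $(X,g)$ is noncompact with infinite volume (which holds here since $\vol(B_r(x))$ grows, and in fact $\mu(X)=H$ together with $H\ge 0$; if $H=0$ the statement $\lambda_0=\lambda_0^{\rm ess}=0$ is immediate from $h(X)=0$ and nonnegativity of the spectrum, so we may assume $H>0$ and hence exponential volume growth, a fortiori infinite volume), yielding
$$\lambda_0^{\rm ess}(X)\ \le\ \frac{\mu(X)^2}{4}\ =\ \frac{H^2}{4}.$$

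Next I would assemble these. Since the essential spectrum is contained in the spectrum and the spectrum is bounded below by its infimum $\lambda_0(X)$, we always have $\lambda_0(X)\le\lambda_0^{\rm ess}(X)$. Chaining the inequalities,
$$\frac{H^2}{4}\ \le\ \lambda_0(X)\ \le\ \lambda_0^{\rm ess}(X)\ \le\ \frac{H^2}{4},$$
forces all three displayed quantities to coincide, which is exactly the assertion $\lambda_0(X)=\lambda_0^{\rm ess}(X)=H^2/4$.

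The only genuinely delicate point is making sure the hypotheses of Brooks' theorem are met: his upper bound $\lambda_0^{\rm ess}\le\mu(X)^2/4$ requires that $X$ have infinite volume (equivalently, that the manifold be "large" in the appropriate sense). In our setting this is automatic once $H>0$, since by Theorem~\ref{thm:cheeg} the volume grows like $e^{Hr}$; and the degenerate case $H=0$ is handled separately and trivially as noted above. A secondary point worth a sentence is the choice of sign convention: with the convention of \cite{Cha-84} used throughout this paper, $\lambda_0(X)=\inf\operatorname{spec}(-\Delta)$ is nonnegative, so the Cheeger and Brooks inequalities are stated in the form above without sign ambiguity. I expect no real obstacle here; the corollary is essentially a bookkeeping consequence of Theorem~\ref{thm:cheeg} once the two classical inequalities are quoted correctly.
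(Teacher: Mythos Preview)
Your proposal is correct and follows exactly the approach the paper indicates: the paper simply says ``Applying Cheeger's inequality and Brooks' result $\lambda_0^{\rm ess}\le\mu(X)^2/4$ (see \cite{Br-81}), we obtain'' and states the corollary, and you have spelled out precisely this chain $H^2/4\le\lambda_0(X)\le\lambda_0^{\rm ess}(X)\le H^2/4$. One minor remark: your separate treatment of $H=0$ is unnecessary and slightly misstated (Cheeger with $h(X)=0$ gives only the trivial lower bound $\lambda_0\ge 0$, not the upper bound), but this is harmless since the main argument already covers that case---a noncompact simply connected harmonic space always has infinite volume (e.g.\ because $\theta$ is increasing), so Brooks applies uniformly.
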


Applying \cite[Prop. 2.4]{Kn-12}, we obtain

\begin{cor}
  Let $(X,g)$ be a non-compact, simply connected harmonic
  space of dimension $n$. If $X$ has vanishing Cheeger constant, then $X$ is
  isometric to the flat Euclidean space $\RR^n$.
\end{cor}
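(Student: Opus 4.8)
The plan is to combine Theorem~\ref{thm:cheeg} with Knieper's result \cite[Prop. 2.4]{Kn-12}. By Theorem~\ref{thm:cheeg} we have $h(X) = H$, so the hypothesis that the Cheeger constant vanishes is equivalent to $H = 0$, i.e., the horospheres of $X$ are minimal hypersurfaces. The content of \cite[Prop. 2.4]{Kn-12} is precisely that a non-compact simply connected harmonic manifold with $H = 0$ is flat, hence isometric to $\RR^n$ (a complete simply connected flat Riemannian manifold is Euclidean space). So the proof is essentially a one-line deduction once these two ingredients are in place.

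\begin{proof}
By Theorem~\ref{thm:cheeg} we have $h(X) = H$. Thus the assumption $h(X) = 0$ is equivalent to $H = 0$, i.e., the horospheres of $X$ have vanishing mean curvature. By \cite[Prop. 2.4]{Kn-12}, a non-compact simply connected harmonic manifold with $H = 0$ has vanishing curvature, and being complete and simply connected it is therefore isometric to the flat Euclidean space $\RR^n$.
\end{proof}

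The only point requiring care is to make sure the hypotheses of \cite[Prop. 2.4]{Kn-12} match ours exactly; since Knieper works throughout with non-compact simply connected harmonic spaces, this is immediate, and there is no real obstacle here. (Alternatively, one could avoid citing \cite{Kn-12} and argue directly: $H = 0$ forces the density function $\theta'/\theta$, which is monotone decreasing to $H = 0$ and nonnegative, to be identically zero; this makes $\theta$ constant, hence by \eqref{eq:Lappolcoord} the radial Laplacian coincides with the Euclidean one, forcing flatness. But the cleanest route is the citation above.)
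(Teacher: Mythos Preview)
Your main proof is correct and is exactly the paper's approach: the corollary is stated immediately after the sentence ``Applying \cite[Prop. 2.4]{Kn-12}, we obtain'', so the intended argument is precisely to combine $h(X)=H$ from Theorem~\ref{thm:cheeg} with Knieper's proposition.

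One comment on your parenthetical alternative, though: the claim that $H=0$ forces $\theta'/\theta\equiv 0$ is false. The function $\theta'/\theta$ is the mean curvature of the geodesic sphere $S_r$, and near $r=0$ it behaves like $n/r\to+\infty$; in Euclidean $\RR^{n+1}$ one has $\theta(r)=r^n$ and $\theta'/\theta=n/r$, which is certainly not identically zero and $\theta$ is not constant. So ``monotone decreasing to $0$ and nonnegative'' does not give identically zero here, and this shortcut does not work as stated. The actual content of \cite[Prop.~2.4]{Kn-12} is more subtle (it uses, for instance, that harmonic spaces are asymptotically harmonic and an argument about Jacobi tensors/Riccati equations to force flatness when $H=0$), so the citation really is needed.
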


Finally, we consider the Abel transform of the heat kernel on
non-compact harmonic manifolds. We first state a useful lemma.

\begin{lem} \label{lem:expgrhoro}
  Let $\H \subset X$ be a horosphere and $x\in\H$. Then there is
  $C>0$ so that
  $$\vol_\H(B_r(x)\cap\H)\leq C e^{Cr} $$
  for all $r\geq 0$.
\end{lem}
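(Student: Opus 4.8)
The plan is to bound the $n$-dimensional volume of $A:=B_r(x)\cap\H$ by the $(n{+}1)$-dimensional volume of the slightly larger ambient ball $B_{r+1}(x)$, by thickening $A$ to a unit-height collar transverse to $\H$ and applying the volume comparison of Corollary \ref{cor:horvolform}; the estimate then follows from the at most exponential growth of metric balls in $X$.

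Concretely, I would run the construction of Section \ref{sec:abelfourier} with reference point $x_0:=x$ and with $b$ the Busemann function for which $\H=b^{-1}(0)=:\H_0$, so that $x\in\H_0$ and the global diffeomorphism $\Psi\colon\RR\times\H_0\to X$ satisfies $\Psi(0,y)=y$ for all $y\in\H_0$. I then consider the collar map
$$F\colon A\times[0,1]\to X,\qquad F(y,t):=\Psi(t,y)=\Psi_t(y).$$
It is injective, since $\Psi$ is a diffeomorphism, and for each fixed $y$ the curve $t\mapsto\Psi_t(y)$ is a unit speed geodesic starting at $y$; hence $d(x,F(y,t))\le d(x,y)+t\le r+1$, and therefore $F(A\times[0,1])\subseteq B_{r+1}(x)$.

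Next I would compute the volume of this collar. By Corollary \ref{cor:horvolform} we have $\Psi^*\omega=e^{sH}\,ds\wedge\omega_0$, where $\omega$ is the volume form of $X$ and $\omega_0$ that of $\H=\H_0$. Since $\Psi$ is injective and $e^{sH}\ge 1$ for $s\ge 0$, Fubini's theorem gives
$$\vol\big(F(A\times[0,1])\big)=\int_{[0,1]\times A}e^{sH}\,ds\wedge\omega_0=\Big(\int_0^1 e^{sH}\,ds\Big)\,\vol_\H(A)\ \ge\ \vol_\H(A).$$
Combined with $F(A\times[0,1])\subseteq B_{r+1}(x)$ this yields $\vol_\H(B_r(x)\cap\H)\le\vol(B_{r+1}(x))$ for all $r\ge 0$. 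Finally, the right-hand side grows at most exponentially: by Theorem \ref{thm:cheeg} one has $\limsup_{\rho\to\infty}\tfrac{1}{\rho}\log\vol(B_\rho(x))=H<\infty$, so there is a $C>0$ with $\vol(B_{r+1}(x))\le Ce^{Cr}$ for all $r\ge 0$; alternatively, without using Theorem \ref{thm:cheeg}, the monotonicity of $\theta'/\theta$ from \cite[Cor. 2.1]{RaSha-03} gives $\theta(\rho)\le\theta(1)e^{c\rho}$ for $\rho\ge 1$ with $c=\theta'(1)/\theta(1)$, whence $\vol(B_\rho(x))=\omega_n\int_0^\rho\theta$ is at most exponential. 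This would finish the argument.

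There is no genuine obstacle in this proof; the only points requiring care are keeping the volume forms $\omega$ on $X$ and $\omega_0$ on $\H$ apart, and checking that the unit collar over $A$ stays inside $B_{r+1}(x)$. The argument also covers the flat case $H=0$, as $\int_0^1 e^{sH}\,ds\ge 1$ regardless.
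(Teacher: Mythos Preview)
Your argument is correct and follows essentially the same route as the paper: thicken $A=B_r(x)\cap\H$ along the normal flow $\Psi_s$, compute the tube's volume via $\Psi^*\omega=e^{sH}\,ds\wedge\omega_0$, and bound it by the volume of an ambient metric ball. The only cosmetic differences are that the paper thickens over a symmetric interval $[-R,R]$ (getting the factor $\tfrac{2\sinh(RH)}{H}$) and then invokes Bishop's volume comparison for the ball estimate, whereas you use the one-sided collar $[0,1]$ and the monotonicity of $\theta'/\theta$ (or Theorem~\ref{thm:cheeg}); your version also handles $H=0$ without a separate limit.
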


\begin{proof}
  Without loss of generality, we can assume that $x=x_0$ and $\H_0 =
  \H$. We use the diffeomorphisms $\Psi_s: \H \to \H_s$, introduced
  earlier.

  Let $R > 0$ be fixed. Let $A_r=B_r(x_0)\cap\H$ and
  $$G_{R,r}=\bigcup_{s\in[-R,R]}\Psi_s(A_r)\ .$$
  We compute
  $$\vol(G_{R,r})=\int_{-R}^R\vol_{\H_s}(\Psi_s(A_r)) ds=\int_{-R}^Re^{sH}\vol_\H(A_r)ds=\frac{2\sinh(RH)}{H}\vol_\H(A_r)\ .$$
  By the triangle inequality, $G_{R,r}\subset B_{r+R}(x_0)$, and,
  since
  $$\vol(B_{r+R}(x_0))\leq C'e^{C'(r+R)}$$
  with some constant $C'>0$, by Bishop's volume comparison theorem, we
  have
  $$
  \vol_\H(A_r)=\frac{H\vol(G_{R,r})}{2\sinh(RH)}\leq\frac{HC'e^{C'(r+R)}}{2\sinh(RH)}=\frac{HC'e^{C'R}}{2\sinh(RH)}e^{C'r}.$$
\end{proof}

It is a well-known fact that a general complete Riemannian manifold
$(X,g)$ with Ricci curvature bounded from below has a unique heat
kernel $p_t^X(x,y)$ (see, e.g., \cite[Thm VIII.3]{Cha-84}). In the case
that $(X,g)$ is harmonic, the heat kernel is a {\em radial} kernel
function (see, e.g., \cite[Theorem 1.1]{Sz-90}), and is therefore
uniquely determined by the function $k_t^X(x) := p_t^X(x_0,x)$, where
$x_0 \in X$ is a fixed reference point. Our main result states that
the Abel transform of the heat kernel on a non-compact harmonic space
agrees, up to the factor $e^{-H^2t/4}$, with the Euclidean heat
kernel $k_t^\RR(s) = p_t^\RR(0,s) = \frac{1}{\sqrt{4 \pi t}}
e^{-s^2/(4t)}$. Since the heat kernel of a non-compact harmonic
manifold does not have compact support, one has to guarantee that its
Abel transform (centered at $x_0$), evaluated via the integral
\eqref{eq:Af} over the horospheres $\H_s = \Psi_s(\H_0)$, is
well-defined. This follows from the following result.

\begin{lem} \label{lem:hkernelest}
  Let $t > 0$ be fixed, $x_0 \in \H_0$ and $\Psi_s: \H_0 \to \H_s$ be the
  diffeomorphisms introduced earlier. Let $x_s = \Psi_s(x_0) \in \H_s$.
  For all $\epsilon > 0$, there exists an $r_0 > 0$ such that we have for all
  $s \in \RR$:
  \begin{equation} \label{eq:inthkernel}
  0 \le \int_{\H_s \backslash B_{2|s|+r_0}(x_s)} k_t^X(x)\, d\omega_s(x) \le
  \epsilon.
  \end{equation}
\end{lem}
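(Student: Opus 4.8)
The plan is to combine a Gaussian upper bound for the heat kernel with the exponential volume growth of horospheres from Lemma \ref{lem:expgrhoro}. Positivity in \eqref{eq:inthkernel} is immediate since $k_t^X>0$, so only the upper bound needs an argument, and the decisive point will be that the Gaussian decay $e^{-c\,d(x_0,\cdot)^2}$ of the heat kernel outweighs both the exponential growth of $\vol_{\H_s}$ near $x_s$ and the drift of the center $x_s$ away from $x_0$, uniformly in $s$.

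First I would record two elementary facts. Since $x_s=\Psi_s(x_0)=\exp_{x_0}(-sv_0)$ lies on a geodesic segment of length $|s|$ issuing from $x_0$, we have $d(x_0,x_s)\le|s|$, so for $x\in\H_s$ with $d(x,x_s)>\rho$ the triangle inequality gives $d(x_0,x)\ge\rho-|s|$. Moreover $\H_s$ is a horosphere containing $x_s$, so Lemma \ref{lem:expgrhoro} applies; inspection of its proof (which uses only Proposition \ref{prop:relvol} and Bishop's volume comparison with a Ricci lower bound of $X$) shows that its constant can be chosen as a single $C>0$ valid for all $s$. Decomposing $\H_s\setminus B_{2|s|+r_0}(x_s)$ into the disjoint annular pieces
$$ A_k:=\big(B_{2|s|+r_0+k+1}(x_s)\setminus B_{2|s|+r_0+k}(x_s)\big)\cap\H_s, \qquad k=0,1,2,\dots, $$
each $x\in A_k$ satisfies $d(x,x_s)\le 2|s|+r_0+k+1$, hence $d(x_0,x)\ge|s|+r_0+k$, while Lemma \ref{lem:expgrhoro} gives $\vol_{\H_s}(A_k)\le C e^{C(2|s|+r_0+k+1)}$.

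Next I would invoke the standard Gaussian upper bound for heat kernels on manifolds with Ricci curvature bounded below (Li--Yau type estimates; see, e.g., \cite{Cha-84}): since $X$ is Einstein its Ricci curvature is bounded below, and since $X$ is harmonic the volume $\vol(B_r(x))$ depends on $r$ only, so for the fixed $t>0$ there are constants $c,C_t>0$ with $k_t^X(x)=p_t^X(x_0,x)\le C_t e^{-c\,d(x_0,x)^2}$ for all $x$. Combining this with the volume bound above and the inequality $(a+b)^2\ge a^2+b^2$ for $a,b\ge0$,
$$ \int_{A_k}k_t^X\,d\omega_s\le C_t C e^{C}\,e^{2C|s|-c|s|^2}\,e^{C(r_0+k)-c(r_0+k)^2}\le C'\,e^{C(r_0+k)-c(r_0+k)^2}, $$
with $C':=C_t C e^{C+C^2/c}$, using $2C|s|-c|s|^2\le C^2/c$ for all $s$. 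Summing over $k$ and comparing with an integral (taking $r_0$ large enough that $u\mapsto e^{Cu-cu^2}$ is decreasing on $[r_0-1,\infty)$ with $Cu\le\tfrac c2 u^2$ there),
$$ 0\le\int_{\H_s\setminus B_{2|s|+r_0}(x_s)}k_t^X\,d\omega_s\le C'\sum_{k=0}^\infty e^{C(r_0+k)-c(r_0+k)^2}\le C'\int_{r_0-1}^\infty e^{-cu^2/2}\,du, $$
and the right-hand side is independent of $s$ and tends to $0$ as $r_0\to\infty$; choosing $r_0$ so that it is $\le\epsilon$ finishes the proof.

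I expect the main obstacle to be exactly this uniformity in $s$: since $\vol_{\H_s}$ grows like $e^{C|s|}$ near $x_s$ and the center $x_s$ itself may be at distance $\sim|s|$ from $x_0$, a merely exponential decay of the heat kernel would be useless, so one must use the full Gaussian decay and, in addition, verify that the constant in Lemma \ref{lem:expgrhoro} really is the same for every horosphere $\H_s$.
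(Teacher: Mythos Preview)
Your proof is correct and follows essentially the same strategy as the paper's: a Li--Yau Gaussian upper bound on $k_t^X$, an annular decomposition of $\H_s\setminus B_{2|s|+r_0}(x_s)$, the horosphere volume estimate of Lemma~\ref{lem:expgrhoro} (with the uniform constant, which you correctly verify), and the triangle inequality $d(x_0,x)\ge d(x_s,x)-|s|$. The only cosmetic difference is that the paper uses annuli of width $r=2|s|+r_0$ (so the radii scale with $|s|$) and a slightly more laborious exponent inequality, whereas your unit-width annuli together with $(a+b)^2\ge a^2+b^2$ and $2C|s|-c|s|^2\le C^2/c$ give a cleaner separation of the $|s|$-dependence; both lead to a tail bound independent of $s$ that vanishes as $r_0\to\infty$.
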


\begin{proof}
  Since the Ricci curvature of the non-compact harmonic manifold
  $(X,g)$ is bounded below, there exist constants $C_t, \alpha_t > 0$
    such that
  \begin{equation} \label{eq:heatdecay}
    0 \le k_t^X(x) \le C_t e^{-\alpha_t r(x)^2} \qquad
    \text{for all $x \in X$},
  \end{equation}
  by a classical result of Li and Yau \cite{LY-86}. Using Lemma
  \ref{lem:expgrhoro}, we derive for arbitrary $r>0$:
  \begin{eqnarray*}
    \int_{\H_s \backslash B_r(x_s)} k_t^X(x)\, d\omega_s(x) &\le&
    \sum_{j=1}^\infty \int_{(\H_s \cap B_{(j+1)r}(x_s)) \backslash B_{jr}(x_s)}
    k_t^X(x)\, d\omega_s(x) \\
    &\le& \sum_{j=1}^\infty \vol(\H_s \cap B_{(j+1)r}(x_s)) C_t
    e^{-\alpha_t (jr-|s|)^2} \\
    &\le& C C_t \sum_{j=1}^\infty e^{C(j+1)r-\alpha_t (jr-|s|)^2}.
  \end{eqnarray*}
  Choosing $r = r_0 + 2|s|$ and $r_0 \ge \frac{2C}{\alpha_t} + \frac{1}{2}$,
  one easily sees that
  \begin{eqnarray*}
  C(j+1)r + \frac{\alpha_t}{2}j r_0 + 2 \alpha_t j r |s| &\le&  C(j+1)r + \frac{\alpha_t}{2}j r + 2 \alpha_t j r |s| \\
  &\le& \alpha_t j^2 r^2 \le \alpha_t j^2 r^2 + \alpha_t j^2 r,
  \end{eqnarray*}
  for all $j \ge 1$, which implies
  \begin{equation} \label{eq:ineq}
  C(j+1)r-\alpha_t (jr-|s|)^2 \le - \frac{\alpha_t}{2} j r_0.
  \end{equation}
  Choosing $r_0 \ge \frac{2C}{\alpha_t} +
  \frac{1}{2}$ even larger, if needed, we deduce from \eqref{eq:ineq}
  $$ \int_{\H_s \backslash B_r(x_s)} k_t^X(x)\, d\omega_s(x) \le C C_t
  \sum_{j=1}^\infty \left( e^{-\frac{\alpha_t}{2}r_0} \right)^j =
  C C_t \frac{e^{-\frac{\alpha_t}{2}r_0}}{1-e^{-\frac{\alpha_t}{2}r_0}} \le \epsilon, $$
  finishing the proof.
\end{proof}

\begin{thm} \label{thm:heatk}
  Let $(X,g)$ be a non-compact, simply connected harmonic
  space and $H \ge 0$ be the mean curvature of its horospheres. Then
  the Abel transform $\A k_t^X$ of the heat kernel $k_t^X(x) =
  p_t^X(x_0,x)$ is
  $$ (\A k_t^X)(s) = e^{-H^2t/4} \frac{1}{\sqrt{4\pi t}} e^{-s^2/4t}. $$
\end{thm}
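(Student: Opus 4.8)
The heat kernel $k_t^X$ is a smooth radial function, although not compactly supported, so we first make sense of its Abel transform through the integral \eqref{eq:Af}, setting
$$(\A k_t^X)(s):=e^{-\frac{H}{2}s}\int_{\H_s}k_t^X(x)\,\omega_s(x).$$
The guiding principle is Lemma \ref{dualALaplace}: since $\A$ intertwines $\Delta$ with $\frac{d^2}{ds^2}-\frac{H^2}{4}$, the function $s\mapsto(\A k_t^X)(s)$ should be the heat kernel of the Klein--Gordon operator $\frac{d^2}{ds^2}-\frac{H^2}{4}$ on $\RR$, which is $e^{-H^2t/4}$ times the Euclidean heat kernel $\frac{1}{\sqrt{4\pi t}}e^{-s^2/4t}$. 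I would make this precise by computing the Euclidean Fourier transform of $\A k_t^X$ directly and recognising it as that of $e^{-H^2t/4}\frac{1}{\sqrt{4\pi t}}e^{-s^2/4t}$; this is morally the identity $\widehat{\A T}=\F T$ of Proposition \ref{prop:ATFT}, which cannot be invoked verbatim because $k_t^X\notin\E_0'(X)$.

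First I would verify that the above integral converges and that $s\mapsto(\A k_t^X)(s)$ is a continuous, rapidly decreasing function, locally uniformly in $t\in(0,\infty)$. For this I combine the Gaussian bound \eqref{eq:heatdecay}, the horosphere volume estimate of Lemma \ref{lem:expgrhoro} (whose constant is uniform over all $\H_s$, since it depends only on the Ricci lower bound and on $H$), and the argument in the proof of Lemma \ref{lem:hkernelest}: splitting $\H_s$ into geodesic annuli centred at $x_s=\Psi_s(x_0)$ and using $r(x)\ge|s|$ for $x\in\H_s$ (the Busemann function is $1$-Lipschitz), one obtains $\int_{\H_s}k_t^X\,\omega_s\le C\,e^{c|s|-\alpha_t s^2}$, hence $(\A k_t^X)(s)\le C\,e^{(c+H/2)|s|-\alpha_t s^2}$. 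In particular $\A k_t^X\in L^1(\RR)$, and the same estimates, together with Bishop's volume comparison, license the Fubini and differentiation-under-the-integral steps below.

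Next, for $\lambda\in\RR$ put $\alpha=\frac{H}{2}-i\lambda$, so that $f_\alpha=e^{-\alpha b}$ is the eigenfunction of Lemma \ref{lem:aeigen} with $\Delta f_\alpha=-(\lambda^2+\frac{H^2}{4})f_\alpha$. Using Corollary \ref{cor:horvolform} to turn an integral over $X$ into an iterated integral over the horospheres, together with Fubini, we get
$$\int_\RR e^{i\lambda s}(\A k_t^X)(s)\,ds=\int_\RR e^{i\lambda s-\frac{H}{2}s}\Big(\int_{\H_s}k_t^X\,\omega_s\Big)\,ds=\int_X e^{-\alpha b(x)}k_t^X(x)\,dx=\int_X f_\alpha(x)\,k_t^X(x)\,dx.$$
To evaluate the right-hand side, set $g(t):=e^{t(\lambda^2+H^2/4)}\int_X f_\alpha(x)k_t^X(x)\,dx$. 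On one hand, since $|f_\alpha|=e^{-Hb/2}$ grows at most exponentially in $r(x)$ while, for small $t$, the mass of $k_t^X\,dx$ concentrates at $x_0$ faster than any fixed exponential (Li--Yau bounds), one has $g(0^+)=f_\alpha(x_0)=1$. On the other hand, for $t>0$ one may differentiate under the integral --- legitimate because $\partial_t k_t^X$ and $\nabla k_t^X$ obey Gaussian bounds of the type \eqref{eq:heatdecay}, locally uniformly in $t$ --- and then integrate by parts on $X$: the boundary terms over $\partial B_R(x_0)$ vanish as $R\to\infty$ since $k_t^X$ and $\nabla k_t^X$ decay like Gaussians, $f_\alpha$, $\nabla f_\alpha$, $\Delta f_\alpha$ grow at most exponentially, and $\area(\partial B_R(x_0))$ grows at most exponentially by Bishop. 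Using $\partial_t k_t^X=\Delta k_t^X$ and $\Delta f_\alpha=-(\lambda^2+\frac{H^2}{4})f_\alpha$, this yields $g'(t)=0$, whence $g\equiv 1$ and $\int_X f_\alpha k_t^X\,dx=e^{-t(\lambda^2+H^2/4)}$.

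Putting these together,
$$\int_\RR e^{i\lambda s}(\A k_t^X)(s)\,ds=e^{-t(\lambda^2+H^2/4)}=e^{-H^2t/4}e^{-\lambda^2t}=\int_\RR e^{i\lambda s}\Big(e^{-H^2t/4}\frac{1}{\sqrt{4\pi t}}e^{-s^2/4t}\Big)\,ds$$
for all $\lambda\in\RR$, the last step being the elementary Fourier transform of a Gaussian. Since both $\A k_t^X$ and $s\mapsto e^{-H^2t/4}\frac{1}{\sqrt{4\pi t}}e^{-s^2/4t}$ lie in $L^1(\RR)$, injectivity of the Euclidean Fourier transform gives the claimed formula. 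The one genuine difficulty is that $k_t^X$ is not compactly supported, so the Abel and Fourier identities of Sections~2 and~3 do not apply off the shelf; all the real work lies in the quantitative decay of $k_t^X$ --- the bound \eqref{eq:heatdecay}, Lemmata \ref{lem:expgrhoro} and \ref{lem:hkernelest}, and Bishop's comparison --- which legitimises Fubini, differentiation under the integral, and integration by parts on the non-compact manifold $X$; everything else is formal.
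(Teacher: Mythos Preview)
Your proposal is correct and follows essentially the same route as the paper: show $\A k_t^X\in C(\RR)\cap L^1(\RR)$ via the Gaussian heat-kernel bounds and Lemma~\ref{lem:expgrhoro}, then identify its Euclidean Fourier transform as $e^{-t(\lambda^2+H^2/4)}$ by differentiating $t\mapsto\int_X(\text{eigenfunction})\,k_t^X$, using $\partial_t k_t^X=\Delta k_t^X$ and two integrations by parts, and conclude by Fourier inversion. The only cosmetic difference is that you pair $k_t^X$ against the non-radial eigenfunction $f_\alpha=e^{-\alpha b}$, whereas the paper uses its radial average $\varphi_\lambda=\pi f_\alpha$; since $k_t^X$ is radial these give the same integral, and the growth control needed for the integration by parts is the same ($|f_\alpha|,|\nabla f_\alpha|\le Ce^{Hr/2}$, matching the paper's bounds on $\varphi_\lambda$). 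The paper also obtains $\A k_t^X\in L^1$ more cheaply, via evenness and heat conservation rather than direct Gaussian estimates, and is more explicit about the source of the derivative bounds on $k_t^X$ (citing Grigor'yan and Davies rather than ``Li--Yau type''), but none of this changes the substance.
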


\begin{proof} In the case $H=0$, $(X,g)$ is the Euclidean space and
  there is nothing to prove. So we can assume that $H > 0$.

  Since $\A k_t^X: \RR \to \RR$ is an even function, we have
$$ \int_{-\infty}^\infty \A k_t^X(s) ds = 2 \int_0^\infty e^{-\frac{H}{2}s}
\int_{\H_s} k_t^X(z) d\omega_s(z) \; ds \le 2 \int_0^\infty
\int_{\H_s} k_t^X(z) d\omega_s(z) \; ds \le 2 \int_X k_t^X(x) dx =
2, $$
by the heat conservation property $\int_X k_t^X(x) dx = 1$ for all $t > 0$
(see, e.g., \cite[Theorem 8.5]{Cha-84}). This shows that $\A k_t^X \in
L^1(\RR)$. Next, we show that $\A k_t^X$ is continuous. Let $s_0 \in
\RR$ and $\epsilon > 0$ be given. We conclude from Lemma
\ref{lem:hkernelest}, that there is an $r_0 > 0$ such that
$$ e^{-\frac{H}{2}s} \int_{\H_s \backslash B_{2s+r_0}(x_s)} k_t^X(x)d\omega_s(x)
\le \epsilon/3, $$
for all $s \in (s_0-1,s_0+1)$. Since the map
$$
s \mapsto F(s) := e^{-\frac{H}{2}s} \int_{\H_s \cap B_{r_0+2|s|}(x_s)}
k_t^X(x)d\omega_s(x)
$$
is obviously continuous, we can find $0 < \delta < 1$ such that
$$ | F(s) - F(s_0) | \le \epsilon/3, $$
for all $s \in (s_0-\delta,s_0+\delta)$. This implies that
\begin{multline*}
| \A k_t^X(s) - \A k_t^X(s_0) | \le \\
\left| e^{-\frac{H}{2}s} \int_{\H_s \backslash B_{2s+r_0}(x_s)} k_t^X(x)d\omega_s(x)\right| + \left| e^{-\frac{H}{2}s_0} \int_{\H_{s_0} \backslash B_{2s_0+r_0}(x_{s_0})} k_t^X(x)d\omega_{s_0}(x) \right|
+ | F(s) - F(s_0) | \le \epsilon,
\end{multline*}
for all $s \in (s_0-\delta,s_0+\delta)$. This shows that $\A k_t^X \in
C(\RR) \cap L^1(\RR)$.

For the proof of the theorem, it only remains to show that the Fourier
transforms of the $L^1$-functions $\A k_t^X$ and $e^{-H^2t/4} k_t^\RR$
agree. To show this, we need some growth information of $k_t^X$ and
$\varphi_\lambda$ and their derivatives.

Let us first consider $\varphi_\lambda$ for $\lambda \in \RR$. An
immediate consequence of $\varphi_\lambda = a \psi_\lambda$ is
$$ | \varphi_\lambda(r) | \le C e^{\frac{H}{2} r}, $$
with a suitable constant $C > 0$. Moreover, $\varphi'' +
\frac{\theta'}{\theta} \varphi' = L \varphi$ with $L = - \left(
  \lambda^2 + H^2/4 \right)$ implies that
$$ \Vert \nabla \varphi_\lambda(r) \Vert \le |L| \int_0^r
\underbrace{\left| \frac{\theta(t)}{\theta(r)} \right|}_{\le 1} |
\varphi_\lambda(t) | dt, $$
which shows that $\Vert \nabla \varphi_\lambda \Vert$ grows also at most
exponentially in the radius.

Next, we derive superexponential decay of the derivatives
$\frac{\partial}{\partial t} k_t^X$ and $\Vert \nabla k_t^X
\Vert$. Since the Ricci curvature of $(X,g)$ is bounded from below and
all balls of the same radius have the same volume, we conclude from
\cite[Prop. 1.1]{Gri-94} that
$$ p(x,x,t) \le C \begin{cases} e^{-\lambda_0(X) t},\quad
  \text{if $t \ge 1$}, \\
  t^{-n/2}, \quad \text{if $t \le 1$}, \end{cases} $$ with a suitable
constant $C > 0$. Since we have $\lambda_0(X) = H^2/4 > 0$, we can
find another constant $C' > 0$ such that
$$ p(x,x,t) \le \frac{C'}{t^{n/2}}, $$
for all $x \in X$ and $t > 0$. Then we are in Case 1 of \cite{Dav-89},
and Theorems 2 and 6 in \cite{Dav-89} imply that, for any fixed time
$t > 0$, the above heat kernel derivatives decay at the rate $C p(r)
e^{-r^2/4t}$, with a suitable polynomial $p$, i.e., the decay is
superexponential (i.e., the superexponential decay is quadratic in the
distance).

Let $\lambda \in \RR$ be fixed and
$$f(t) := \widehat{\A k_t^X}(\lambda) = \langle \A k_t^X, \psi_\lambda \rangle
= \langle k_t^X, a \psi_\lambda \rangle = \int_X
p_t^X(x_0,x)\varphi_\lambda(x) dx. $$

The last step of these identities follows easily from the exponential
decay \eqref{eq:heatdecay} of the heat kernel itself. We need to show that
$f(t)$ agrees with
$$g(t) := e^{-\frac{H^2}{4}t} \widehat{k_t^\RR}(\lambda) = e^{-\frac{H^2}{4}t}
e^{-\lambda^2 t}. $$
Obviously, both functions satisfy $\lim_{t \to 0} f(t) = \lim_{t \to 0} g(t) = 1$.
So it only remains to show that we have $f'(t) = g'(t)$ for all $t > 0$. Now,
\begin{eqnarray*}
  \frac{d}{dt} \int_X p_t^X(x_0,x)\varphi_\lambda(x) dx &=&
  \int_X \left( \frac{\partial}{\partial t} p_t^X(x_0,x) \right) \varphi_\lambda(x) dx \\
  &=& \int_X \left( \Delta_x p_t^X(x_0,x) \right) \varphi_\lambda(x) dx \\
  &=& - \int_X\left\langle\nabla k_t^X(x) , \nabla \varphi_\lambda(x)\right\rangle dx \\
  &=& \int_X p_t^X(x_0,x) \Delta \varphi_\lambda(x) dx\\
  &=& -\left( \lambda^2 + \frac{H^2}{4} \right) \varphi_\lambda(x_0) = g'(t).
\end{eqnarray*}
All steps in this calculation are justified by the growth properties derived
above. This finishes the proof.
\end{proof}

The corresponding result to Theorem \ref{thm:heatk} in the special
case of Damek-Ricci spaces can be found, e.g., in
\cite[(5.6)]{ADY-96}.

\bigskip

\noindent
Department of Mathematical Sciences, Durham University, Science Laboratories
South Road, Durham DH1 3LE, United Kingdom,
Email: norbert.peyerimhoff@durham.ac.uk

\smallskip

\noindent
Department of Mathematics and Statistics, University of
Cyprus, P.O. Box 20537, 1678 Nicosia, Cyprus,
Email:samiou@ucy.ac.cy

\end{document}